\documentclass[12pt,a4paper,reqno]{amsart}
\numberwithin{equation}{section}
\usepackage[utf8]{inputenc}
\usepackage[english]{babel}
\usepackage{hyperref}

\usepackage{multirow} 
\usepackage{graphicx}
\usepackage{array}
\usepackage{longtable}
\usepackage{tikz-cd}
\newtheorem{theo}{Theorem}[section]
\newtheorem{lem}{Lemma}[section]
\newtheorem{defi}{Definition}[section]

\newtheorem{prop}{Proposition}[section]
\newtheorem{exm}{Example}[section]

\usepackage[foot]{amsaddr}
\usepackage{url}
\usepackage{hyperref}

\usepackage{amsmath}
\usepackage{amssymb, amscd}
\usepackage[all, cmtip]{xy}
\usepackage{xcolor}
\usepackage{multirow} 
\usepackage{longtable}
\usepackage{array}

\theoremstyle{definition}

{\sc}

{}
{}
{\sc}
\newtheorem{remark}{Remark}[section]

\newcommand{\thismonth}{\ifcase\month\or
  January\or February\or March\or April\or May\or June\or
  July\or August\or September\or October\or November\or December\fi
  \space\number\year}

\makeatletter
\newcommand{\rssymb}[2]{\newcommand{#1}{{\mathrmsl{#2}}}}
\newcommand{\calsymb}[2]{\newcommand{#1}{{\mathcal{#2}}}}
\newcommand{\bbsymb}[2]{\newcommand{#1}{{\mathbb{#2}}}}
\newcommand{\lieoper}[2]{\newcommand{#1}{\mathop
  {\mathfrak{#2}\null}\nolimits}}
\newcommand{\oper}[3][n]{\newcommand{#2}{\mathop
  {\mathrm{#3}\null}\ifx n#1\nolimits\else\limits\fi}}
\newcommand{\rsoper}[3][n]{\newcommand{#2}{\mathop
  {\mathrmsl{#3}\null}\ifx n#1\nolimits\else\limits\fi}}
\bbsymb\C{C} \bbsymb\F{F} \bbsymb\HQ{H}\bbsymb\N{N} \bbsymb\Q{Q}
\bbsymb\R{R} \bbsymb\U{U} \bbsymb\V{V} \bbsymb\W{W} \bbsymb\Z{Z}
\bbsymb\bbf{F} \bbsymb\bbk{K} \bbsymb\bbi{I} \bbsymb\bbl{L}
\bbsymb\bbo{O} \bbsymb\bbj{J} \bbsymb\bby{Y} \bbsymb\bbp{P}
\bbsymb\bba{A}
\calsymb\cA{A} \calsymb\cB{B} \calsymb\cC{C} 
\calsymb\cM{M} \calsymb\cN{N} \calsymb\cO{O} \calsymb\cP{P}
\calsymb\cU{U} \calsymb\cV{V} \calsymb\cW{W} \calsymb\cX{X}
\calsymb\cY{Y} \calsymb\cZ{Z}
\renewcommand{\geq}{\geqslant} \renewcommand{\leq}{\leqslant}
\oper\End{End} \oper\Hom{Hom}                    
\oper\Sym{Sym} \oper\Skew{Skew}
\oper\Aut{Aut}                                   
\oper\GL{GL} \oper\SL{SL}\oper\Symp{Sp} \oper\CO{CO} \oper\On{O}
\oper\SO{SO} \oper\Pin{Pin} \oper\Spin{Spin} \oper\CU{CU}
\oper\Un{U} \oper\SU{SU} \oper\PSU{PSU} \rsoper\Diff{Diff}
\rsoper\SDiff{SDiff}
\lieoper\der{der}                                
\lieoper\gl{gl} \lieoper\sgl{sl}\lieoper\symp{sp} \lieoper\co{co}
\lieoper\so{so} \lieoper\spin{spin} \lieoper\cu{cu} \lieoper\un{u}
\lieoper\su{su} \rsoper\Vect{Vect} \rsoper\Ham{Ham}
\def\la#1{\hbox to #1pc{\leftarrowfill}}
\def\ra#1{\hbox to #1pc{\rightarrowfill}}

\newcommand{\Norm}[2][]{\bigl|\mkern-3mu\bigr|#2\bigr|\mkern-3mu\bigr|
  _{\lower1pt\hbox{${}_{#1}$}}}

\rsoper\dimn{dim}                           
\rsoper\grad{grad}                          
\rsoper\kernel{ker}\rsoper\image{im}        
\rsoper\alt{alt}   \rsoper\sym{sym}         
\rsoper\Ad{Ad}     \rsoper\ad{ad}           
\rsoper\CoAd{CoAd} \rsoper\coad{coad}       
\rsoper\trace{tr}  \rsoper\trfree{tf}       
\rsoper\detm{det}                           
\rsoper\Vol{Vol}                            
\rsoper\divg{div}                           
\rsoper\sign{sign}                          
\rssymb\iden{id}                            
\rssymb\vol{vol}                            
\oper\Imag{Im}\oper\Real{Re}                
\newcommand{\sd}{{\raise1pt\hbox{$\scriptscriptstyle +$}}}
\newcommand{\asd}{{\raise1pt\hbox{$\scriptscriptstyle -$}}}
\newcommand{\sdasd}{{\raise1pt\hbox{$\scriptscriptstyle\pm$}}}

\newcommand{\asdsd}{{\raise1pt\hbox{$\scriptscriptstyle\mp$}}}

\rsoper\scal{scal}
\def\kahl/{k\"ahler}
\def\Kahl/{K{\"a}hler}

\begin{document}

\title[Exceptional  Fano 3-folds from  rational curves]
{Exceptional Fano 3-folds from rational curves}

\author[J. Cuadros]{Jaime Cuadros Valle$^1$}
\author[J. Lope]{Joe Lope Vicente$^1$}

\address{$^1$Departamento de Ciencias, Secci\'on Matem\'aticas,
Pontificia Universidad Cat\'olica del Per\'u,
Apartado 1761, Lima 100, Per\'u}
\email{jcuadros@pucp.edu.pe}
\email{j.lope@pucp.edu.pe}

\date{\thismonth}

\begin{abstract} 
We show exceptionality of certain  families  of  non-quasismooth  weighted hypersurfaces. In particular these admit  Kähler-Einstein metrics. Our examples  are produced by the monomials generating the complex deformations of orbifolds whose corresponding  $S^1$-Seifert bundles are smooth  rational homology 7-spheres admitting Sasaki-Einstein metrics.  From our construction, it follows that these exceptional  Fano hypersurfaces describe elements in the boundary of the  K-moduli of   
$\mathbb{Q}$-Fano 3-folds.
\end{abstract}

\maketitle

\noindent{\bf Keywords:} Fano 3-folds, Exceptionality, K\"ahler-Einstein, weak Ricci-flat K\"ahler.
\medskip

\noindent{\bf Mathematics Subject Classification}  14B05; 14J30, 14J45, 32Q20. 
\medskip

\maketitle
\vspace{-2mm} 


\section{Introduction}

The existence of Kähler-Einstein metrics on Fano manifolds is one of the main branches of research in complex 
and differential geometry, but singular Fano varieties have also received significant interest. The $\alpha$-invariant, introduced by Tian in \cite{Ti}, provides a  criterion for the existence of this type of metrics on 
a compact Fano manifold. Later, Demailly and Kollár  generalize this invariant for Fano varieties with quotient singularities \cite{DK}. Because the existence of Kähler-Einstein metrics on orbifolds is equivalent to the existence of Sasaki-Einstein metrics on the corresponding smooth $S^1$-Seifert bundle \cite{BG1}, this generalized version of the  $\alpha$-invariant was intensely used in the last two decades (see for instance  \cite{BGN2,  BGK, Kol1, Kol2}). Furthermore, Demailly reformulated this invariant in terms of the log canonical threshold \cite{CS} and made it suitable for the study of Kähler-Einstein metrics on more general Fano varieties.   Originally, the $\alpha$-invariant was designed  to determine  whether the Fano manifold was K-stable which is  an algebro-geometric condition introduced by Tian \cite{Ti} ({\it c.f.} \cite{Do}) in order to  characterize  the existence of a Kähler–Einstein metric on a Fano manifold  by this condition.  This correspondence, called the Yau-Tian-Donaldson conjecture, was  proven by Chen, Donaldson and Sun in \cite{CDS} and Tian \cite{Ti2}. In recent years, a more algebraic approach to K-stability has permitted  a generalization of this correspondence which was proven in \cite{LXZ} for singular Fano varieties.

In this note, we construct  singular Fano 3-folds  that are  exceptional, that is, varieties with $\alpha$-invariant greater than 1 \cite{Bi2}.  Since Odaka and Sano proved that Fano varieties of dimension $n$ with at most Kawamata log-terminal singularities (klt for short) and whose $\alpha$-invariant is  greater than $\frac{n}{n+1}$ must be K-stable  \cite{OS}, it follows from the generalization of the Yau-Tian-Donaldson conjecture for singular varieties \cite{LXZ} that the  examples exhibited here  admit weak K\"ahler-Einstein metrics.  

These examples are constructed from deformations of  hypersurfaces  cut out by certain invertible polynomials. 
 Our  perturbations  determine  varieties   that are not  quasismooth, so their singularities are not always quotient singularities but we showed that all of them are klt using a criterion given by \cite{IP} suitable for  hypersurfaces  embedded  in toric varieties.  Unlike the quasismooth case,   establishing the  exceptionality of non-quasismooth hypersurfaces, turns out to be quite involved and we do this using  similar ideas as the ones exposed in \cite{To}, where Totaro generalized the methods of Johnson and Kóllar \cite{JK2} replacing the  usual multiplicity of a variety at a point with the notion of  weighted multiplicities instead. 
 
 Our examples  are built up from small deformations of  Thom-Sebastiani sums of two blocks of invertible polynomials 
\begin{itemize}
\item $f=z_0^{a_0}+z_1^{a_1}+z_4 z_2^{a_2}+z_2 z_3^{a_3}+z_3 z_4^{a_4}$
\item  $f=z_0^{a_0}+z_0 z_1^{a_1}+z_4 z_2^{a_2}+z_2 z_3^{a_3}+z_3 z_4^{a_4}$
\item $f=z_1 z_0^{a_0}+z_0 z_1^{a_1}+z_4 z_2^{a_2}+z_2 z_3^{a_3}+z_3 z_4^{a_4}$.
\end{itemize}
such that  the weighted hypersurface cut out  by these  invertible polynomials have rational homology groups equal to that of  a complex projective 3-spaces and such that the cycle block (sometimes called loop block) $z_4 z_2^{a_2}+z_2 z_3^{a_3}+z_3 z_4^{a_4}$  appearing in  these sums,  cut out a rational curve, see Remark 2.1. These requirements allow us to successfully compute the bounds for the $\alpha$-invariant for the perturbations of $f$.  

The exceptional non-quasismooth Fano hypersurfaces described above are limiting points of the  moduli of  quasismooth hypersurfaces  admitting Kähler-Einstein metrics and moreover, give rise to non-smooth links whose metric cones may be indicative of degenerating Calabi-Yau cones \cite{Od, Li}. 

The arguments shown in this paper can be generalized to higher dimensions and we do this in a succeeding work.

%
%
%
%

\smallskip

The paper is organized as follows. Section 2 reviews the background material relevant for this paper. In Section 3 we proved that our examples are klt. In Section 4 we establish the exceptionality of our families and  give explicit examples.

\section{Preliminaries}

\subsection{Weighted hypersurfaces}

 As an algebraic variety, a weighted projective space can be defined as  $\displaystyle{\mathbb{P}\left(w_0, \ldots, w_{n}\right)=\operatorname{Proj}(S(\mathbf{w})),}$  
where $S(\mathbf{w})=\mathbb{C}\left[x_0, \ldots, x_{n+1}\right]$ is the graded polynomial ring such that the weight of each $x_i$ equals $w_i,$ 
where $\mathbf{w}=\left(w_0, \ldots, w_n\right)$ is a sequence of positive integers.  However it is useful to give the orbifold construction of this set: consider  the weighted $\mathbb{C}^*$-action on the affine space $\mathbb{C}^{n+1},$ defined by
$$
\left(z_0, \ldots, z_n\right) \mapsto\left(\lambda^{w_0} z_0, \ldots, \lambda^{w_n} z_n\right).
$$
Then we obtain the weighted projective space with a canonical orbifold structure,  defined as the quotient space $$Y=\mathbb{P}\left(w_0, \ldots, w_{n}\right)=\left(\mathbb{C}^{n+1}-\{\mathbf{0}\}\right) / \mathbb{C}^*.$$ 

In section 4, we consider  the weighted projective space as the quotient stack 
$$\mathcal{Y}= \left[\left(\mathbb{C}^{n+1}-\{\mathbf{0}\}\right) / \mathbb{C}^* \right]$$ instead.  
Here $\mathcal{Y}$ is a smooth Deligne-Mumford stack with canonical class $K_{\mathcal{Y}}=O_{\mathcal{Y}}\left(-\sum w_i\right)$. We say that $Y$ is well-formed if the stack $\mathcal{Y}$ has trivial stabilizer in codimension 1, or equivalently if $\operatorname{gcd}\left(w_0, \ldots, \widehat{w}_i, \ldots, w_{n}\right)=1$ for each $i.$ Here the hat symbol means delete that corresponding element. In the well-formed case, the formula for the canonical class of the variety $Y$ coincides with the  formula given for the stack. 

The grading of $S(\mathbf{w})=\mathbb{C}\left[x_0, \ldots, x_{n+1}\right]$ of the polynomial ring defining the well-formed projective space $Y$ determines line bundles $\mathcal{O}(m)$ on the weighted projective stack $\mathcal{Y}.$ However on the corresponding coarse weighted projective  space  $Y$, the sheaf $\mathcal{O}_Y(m)$  is only the reflexive sheaf associated to a Weil divisor in general.   Actually, the divisor class $O_Y(m)$ is Cartier if and only if $m$ is a multiple of every weight $w_i$.
The intersection number $\int_{\mathcal{Y}} c_1(\mathcal{O}(1))^n$ is $1 /\left(w_0 \cdots w_n\right)$. More generally, for an integral closed substack $Z$ of dimension $r$ in $\mathcal{Y}$, its degree is given by   $\int_Z c_1(\mathcal{O}(1))^r$.  

For  closed substacks defined by hypersurfaces  of degree $d$ in $\mathcal{Y}$, that is,  defined by  weighted-homogeneous polynomials of degree $d$
one  can be more precise. Recall  that a polynomial $f \in \mathbb{C}\left[z_0, \ldots, z_n\right]$ is said to be a weighted homogeneous polynomial of degree $d$ and weight vector $\mathbf{w}=$ $\left(w_0, \ldots, w_n\right)$,  if for any $\lambda \in \mathbb{C}^{*}$
$$
f\left(\lambda^{w_0} z_0, \ldots, \lambda^{w_n} z_n\right)=\lambda^d f\left(z_0, \ldots, z_n\right).$$
From the affine algebraic variety  $V_f=\{f=0\} \subset \mathbb{C}^{n+1}$ one construct the  weighted hypersurface 
$$Z_f=(V_f-\{\mathbf{0}\})/ \mathbb{C}^* \subset Y=\mathbb{P}(\mathbf{w}).$$ 
As a substack in $\mathcal{Y},$ the degree of $Z_f$ is $\displaystyle{Z_f\cdot c_1(\mathcal{O}(1))^{n-1}= \frac{d}{w_0 \ldots w_n}}.$ 

If the cone $V_f$ is smooth everywhere except at the origin in $\mathbb{C}^{n+1}$ one says that  $Z_f$  is quasismooth. Notice that  quasismooth weighted hypersurfaces  have  only cyclic quotient singularities and hence are klt. The weighted hypersurface is well-formed if $Y=\mathbb{P}\left(w_0, \ldots, w_{n}\right)$ is well-formed and  the intersection of  $Z_f$ with the singular set of $Y$, that is the locus where the stack $\mathcal{Y}$ has non-trivial stabilizer,  has codimension at least 2 in $Z_f$. We define the index $I$ of the weighted hypersurface  degree $d$   as $I=\sum w_{i}-d.$  When $Z_f$ is well-formed, the canonical divisor satisfies the adjunction formula $K_{Z_f}=\mathcal{O}_{Z_{f}}\left(d-\sum w_{i}\right).$
 
 For well-formedness of the weighted variety we have the following criterion \cite{IF}: 
 \begin{lem}
 A hypersurface defined by the weighted homogeneous polynomial $f$ of degree $d$ is well-formed in the well-formed weighted projective space $Y=\mathbb{P}\left(w_0, \ldots, w_n\right)$ if 
 $\operatorname{gcd}\left(w_0, \ldots, \hat{w}_i, \ldots, \hat{w}_j, \ldots, w_n\right) \mid d$
for distinct $i, j=0, \ldots, n$.  
\end{lem}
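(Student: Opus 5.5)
The plan is to check directly the definition of well-formedness given above. $Y$ is assumed well-formed, so what remains is to show that $Z_f$ meets the singular locus of $Y$ in codimension at least $2$ in $Z_f$. Since $\dim Z_f=n-1$, this means $\dim(Z_f\cap \operatorname{Sing}Y)\leq n-3$.

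First I describe the locus where the stack $\mathcal{Y}$ has non-trivial stabilizer. A point $[z]$ has stabilizer $\mu_m$ with $m>1$ exactly when $m$ divides $w_i$ for every $i$ with $z_i\neq 0$. Hence this locus is the union, over primes $p$, of the coordinate strata
\[
\Pi_I=\{z_i=0 \text{ for } i\notin I\}, \qquad I=\{i: p\mid w_i\},
\]
taken over those $I$ with $\gcd(w_i: i\in I)>1$. Because $Y$ is well-formed, any such $I$ omits at least two indices, so $\dim \Pi_I=|I|-1\leq n-2$.

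Next I bound $\dim(Z_f\cap\Pi_I)$ in two cases.
\begin{itemize}
\item If $|I|\leq n-2$, then $\dim\Pi_I\leq n-3$, and there is nothing to prove.
\item If $|I|=n-1$, then $I$ is the complement of exactly two indices $\{i,j\}$, and $g=\gcd(w_k:k\neq i,j)>1$. Here I apply the hypothesis $g\mid d$.
\end{itemize}

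The key claim in the second case is that $f|_{\Pi_I}$ is not identically zero, so that $Z_f\cap\Pi_I$ is a hypersurface in the $(n-2)$-dimensional space $\Pi_I$ and has dimension $n-3$. This step is the main obstacle. The divisibility $g\mid d$ is exactly the condition that degree-$d$ monomials in the variables $z_k$, $k\in I$, can exist at all. It does not by itself force $f$ to contain such a monomial.

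I would handle this as follows. If $f|_{\Pi_I}\equiv 0$, then $Z_f\supset\Pi_I$, a codimension-one subset of $Z_f$ lying in the singular locus, which is exactly the failure of well-formedness. So the statement has to be read either as being about a general member of $|\mathcal{O}(d)|$, or as asserting that the divisibility condition is necessary. For the "if" direction I would use the general-member reading. Since $g\mid d$ and the semigroup generated by the $w_k$ ($k\in I$) contains all sufficiently large multiples of $g$, degree-$d$ monomials in these variables exist under the usual standing hypotheses (e.g.\ $d$ large relative to the weights, or quasismoothness). For the specific polynomials considered later I would simply check by hand that such a monomial appears in $f$. This is the route taken in \cite{IF}.

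Finally I combine the cases. Over all primes $p$ the intersection $Z_f\cap\operatorname{Sing}Y$ has dimension at most $n-3$, i.e.\ codimension at least $2$ in $Z_f$. Together with the well-formedness of $Y$, this gives that $Z_f$ is well-formed.
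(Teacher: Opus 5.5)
The paper gives no proof of this lemma: it quotes the criterion from \cite{IF} and uses it as a black box. Your argument is correct and is the natural self-contained proof. You decompose the non-trivial-stabilizer locus into coordinate strata $\Pi_I$, and well-formedness of $Y$ gives $|I|\le n-1$. The case $|I|\le n-2$ is handled by dimension, and the case $|I|=n-1$ reduces to showing $f|_{\Pi_I}\not\equiv 0$.

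Your caveat is also correct, and you should state it outright: for a specific $f$ the lemma as printed is false.
\begin{itemize}
\item In $\mathbb{P}(1,1,2,2)$, take $f=z_0^4+z_1^4+z_0^2z_2+z_1^2z_3$. It satisfies every gcd condition; the only non-trivial one is $\gcd(2,2)=2\mid 4$. Yet $Z_f$ contains the curve $\{z_0=z_1=0\}$ of $\mu_2$-points, which has codimension one in the surface $Z_f$.
\item Even for a general member, $g\mid d$ is not enough. In $\mathbb{P}(1,1,10,14)$ with $d=46$ there is no monomial $z_2^az_3^b$ of degree $46$, since $5a+7b=23$ has no non-negative solution. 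So every $Z_{46}$ contains $\{z_0=z_1=0\}$, although $d$ exceeds every weight. Your phrase ``$d$ large'' has to mean beyond the Frobenius number of the $w_k/g$.
\end{itemize}
What your argument actually proves is the sharp criterion. For well-formed $Y$, $Z_f$ is well-formed if and only if, for every pair $i\neq j$ with $\gcd(w_k:k\neq i,j)>1$, $f$ contains a monomial involving neither $z_i$ nor $z_j$. Divisibility by $g$ is only the necessary condition for such a monomial to exist.

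Your quasismooth clause is right, and it needs no genericity. Suppose $g\mid d$ and $f|_{\Pi_I}\equiv 0$, and write $f=z_iA+z_jB$. Well-formedness of $Y$ forces $g\nmid w_i$ and $g\nmid w_j$, so $g\nmid\deg A$ and $g\nmid\deg B$. Hence $A$ and $B$ also vanish on $\Pi_I$. Then $\nabla f$ vanishes on the whole affine cone over $\Pi_I$, which contradicts quasismoothness. So for quasismooth $f$ the gcd condition really is sufficient; this covers, for instance, the quasismooth weighted tangent cones in Section 4.

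For the non-quasismooth polynomials the paper actually uses, your hand check goes through:
\begin{itemize}
\item For a pair $\{i,j\}$ meeting $\{0,1\}$, take a cycle monomial avoiding the index, if any, that lies in $\{2,3,4\}$.
\item For a pair inside $\{2,3,4\}$, take any monomial of $h(z_0,z_1)$.
\end{itemize}
So nothing downstream in the paper is affected.
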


 There are  conditions that determine when a specific polynomial determines a quasismooth weighted hypersurface \cite{IF,JK1}: 
 
 \begin{lem}
 A weighted hypersurface of degree $d$ in $\mathbb{P}\left(w_0, \ldots, w_4\right)$, where $d>a_i$, is quasismooth if and only if the following hold:
\begin{enumerate}
\item For each $i=0, \cdots, 4$ there is a $j$ and a monomial $z_i^{m_i} z_j$ of degree $d$. Here $j=i$ is possible.
\item For all distinct $i, j$ either there is a monomial $z_i^{b_i} z_j^{b_j}$ of degree $d$ or there exist monomials $z_i^{n_1} z_j^{m_1} z_k, z_i^{n_2} z_j^{m_2} z_l$ of degree $d$ with $\{k, l\} \neq\{i, j\}$ and $k \neq l$.
\item For every $i, j$ there exists a monomial of degree $d$ that does not involve either $z_i$ or $z_j$.
\end{enumerate}
\end{lem}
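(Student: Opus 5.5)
The statement concerns a \emph{general} member of the linear system $|\mathcal{O}(d)|$; the lemma is false for special members. I will work on the affine cone. Quasismoothness means that $f$ and all $\partial f/\partial z_i$ have no common zero on $\mathbb{C}^5\setminus\{0\}$. By Bertini's theorem applied on the smooth quasi-affine variety $\mathbb{C}^5\setminus\{0\}$, a general $f$ is smooth away from the base locus of the system spanned by the degree-$d$ monomials. That base locus is a union of coordinate strata. So the problem reduces to checking, stratum by stratum, what happens along
$$\Pi_I=\{z_j=0 \text{ for } j\notin I\}, \qquad I\subset\{0,\dots,4\}.$$
The hypothesis $d>w_i$ rules out linear terms, so no degenerate case arises there.

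\textbf{The general stratum criterion.} I will prove the following version of Iano-Fletcher's criterion. A general $f$ is quasismooth along $\Pi_I\setminus\{0\}$ if and only if either
\begin{itemize}
\item there is a monomial of degree $d$ in the variables $z_I$ alone, or
\item there are at least $|I|$ distinct indices $j\notin I$ admitting a degree-$d$ monomial $z_I^{m}z_j$.
\end{itemize}
To see this, expand
$$f=f_I(z_I)+\sum_{j\notin I} z_j\, g_j(z_I)+O\bigl(z_{\notin I}^2\bigr).$$
On $\Pi_I$, the partials in the directions $j\notin I$ equal $g_j$, and the partials in the directions of $I$ come from $f_I$.

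If $f_I\neq 0$ for general $f$, then $\Pi_I$ is not in the base locus, and Bertini applied to the restricted system handles it. I will argue by induction on $|I|$, the smaller strata being covered first.

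If $f_I\equiv 0$, then the singular points on $\Pi_I$ are exactly the common zeros of the $g_j$. The main step is to show that the general $g_j$, for $j$ in the admissible set $J$, have only the origin as common zero in $\mathbb{C}^{|I|}$ exactly when $|J|\geq|I|$. Necessity is a dimension count: fewer than $|I|$ weighted-homogeneous equations on the $|I|$-dimensional cone leave a positive-dimensional zero set. Sufficiency needs genericity. Each $g_j$ moves in a linear system whose own base locus lies in smaller coordinate strata of $\Pi_I$. On those smaller strata the induction hypothesis controls the situation. Away from them, general members cut dimension down by one at each step. I expect this inductive genericity argument to be the main technical obstacle, since the base loci of the different $g_j$ interact.

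\textbf{Specialising to $n=4$.}
\begin{itemize}
\item For $|I|=1$, $I=\{i\}$, the criterion asks for $z_i^{m}$ or some $z_i^{m}z_j$. This is condition (1), with $j=i$ allowed.
\item For $|I|=2$, $I=\{i,j\}$, it asks for a monomial $z_i^{b_i}z_j^{b_j}$ or two monomials $z_i^{n}z_j^{m}z_k$ with two distinct $k\notin\{i,j\}$. This is condition (2).
\item For $|I|=3$, only two indices lie outside $I$, so the second alternative is impossible and a pure monomial in the three variables is required. This is condition (3).
\item For $|I|=4$, only one index lies outside, so again a pure monomial is needed; one is supplied by (3), since a monomial in three of the four variables is a monomial in all four.
\item For $|I|=5$, $\Pi_I$ is the whole space and the stratum is never in the base locus.
\end{itemize}
Combining these gives both directions of the equivalence.
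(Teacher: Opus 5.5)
The paper gives no proof of this lemma; it quotes Iano-Fletcher's criterion (via Johnson--Koll\'ar). Your outline is the standard proof of that criterion: Bertini away from the monomial base locus, then, on a coordinate stratum with $f_I\equiv 0$, identifying the singular points with the common zeros of the coefficients $g_j$ and counting equations. Your reading of the statement (a general member, with $d>w_i$), your necessity argument, and your translation into conditions (1)--(3) for $n=4$ are correct. The sufficiency half, however, is misstated and left unfinished.

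As written, your per-stratum criterion is false for the closed subspace $\Pi_I$. So is the claim that the general $g_j$ ($j\in J$) have only the origin as common zero in $\mathbb{C}^{|I|}$ once $|J|\geq|I|$. Suppose every degree-$d$ monomial $z_I^{M}z_j$ with $j\in J$ involves a fixed $z_{i_0}$, $i_0\in I$. Then every $g_j$ vanishes on $\Pi_I\cap\{z_{i_0}=0\}$, which has dimension $|I|-1\geq 1$, however large $J$ is. This is ruled out only by the criterion for $I\setminus\{i_0\}$, whose set $J'$ is empty.

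The clean fix, which also removes the obstacle you anticipate, is to work on the open torus orbits $\Pi_I^{\circ}=\{z_i\neq 0\ (i\in I),\ z_j=0\ (j\notin I)\}$. These partition $\mathbb{C}^5\setminus\{0\}$, so no induction is needed. On $\Pi_I^{\circ}$ no monomial vanishes, so each linear system in which $g_j$ moves is base-point free there. The $g_j$ for distinct $j$ collect disjoint sets of monomials of $f$, so they are independent general members. A general member of a base-point-free system contains no component of a given subvariety, so after $|I|$ such cuts the common zero set in $\Pi_I^{\circ}$ has dimension at most $0$.

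Your phrase ``cut dimension down by one at each step'' is missing the final step: that set is stable under the $\mathbb{C}^*$-action. All weights are positive, so the orbits in $\Pi_I^{\circ}$ are one-dimensional, and the set is therefore empty rather than merely finite. Necessity should stay on the closed $\Pi_I$, as you have it, because the nonzero common zero produced by your dimension count may lie on a smaller stratum.
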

\smallskip

In \cite{Ti} Tian showed that a  Fano manifold of dimension $n$  with $\alpha$-invariant  greater
than $\frac{n}{n+1}$ admits a Kähler–Einstein metric. A generalization of this result for Fano varieties with
quotient singularities was given by Demailly and Koll\'ar (see \cite{DK}, Criterion 6.4). For the particular case of a Fano variety given as 
a  weighted hypersurface is quasismooth, this criterion gives  the  following estimate, whose proof can be found in \cite{BBG}, Corollary 5.4.8 ({\it c.f.}  \cite{BGK}).

\begin{theo}
Let $Z_f \subset \mathbb{P}\left(w_0, \ldots w_n\right)$ be a quasismooth weighted homogeneous Fano hypersurface of degree $d$. Then $Z_f$ admits a Kähler-Einstein orbifold metric if the following estimate holds:
$$
d I<\frac{n}{(n-1)} \min _{i, j}\left\{w_i w_j\right\}.
$$
\end{theo}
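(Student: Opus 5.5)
The plan is to reduce the statement to the Demailly--Kollár criterion (\cite{DK}, Criterion 6.4). For a Fano orbifold $X$ of dimension $m$ with only quotient singularities, that criterion says $X$ admits a Kähler--Einstein orbifold metric whenever its $\alpha$-invariant satisfies $\alpha(X)>\frac{m}{m+1}$. Here $Z_f$ is quasismooth, so it has only cyclic quotient singularities, and the orbifold setting applies. It has dimension $m=n-1$, so the target is
$$\alpha(Z_f)>\frac{n-1}{n}.$$
We may also assume $Z_f$ is well-formed, as implicitly in the statement. Adjunction then gives $-K_{Z_f}=\mathcal{O}_{Z_f}(I)$ with $I>0$, since $Z_f$ is Fano.

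The core step is a lower bound for $\alpha(Z_f)$, equivalently for the global log canonical threshold. I would prove
$$\alpha(Z_f)\geq \frac{\min_{i,j}\{w_iw_j\}}{d\,I}.$$
Take an effective $\mathbb{Q}$-divisor $D\sim_{\mathbb{Q}}-K_{Z_f}$ and a point $p\in Z_f$. It suffices to show the pair $(Z_f,\lambda D)$ is log canonical near $p$ for every $\lambda<\min w_iw_j/(dI)$. Write $D=\frac{I}{k}\,\mathrm{div}(s)$ with $s$ a section of $\mathcal{O}(k)$. Work in a local orbifold chart $\mathbb{C}^{n-1}\to Z_f$ around $p$, lifting $D$ to $\tilde D$. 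By the standard bound, $\tilde D$ is log canonical at $\tilde p$ as soon as $\lambda\,\mathrm{mult}_{\tilde p}\tilde D\leq 1$. So the task is to bound the multiplicity of the lift.

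To bound the multiplicity I would use an intersection argument. Choose two coordinates $z_i,z_j$ with $w_iw_j$ minimal among pairs that can serve as general linear-type sections through $p$. Cut $Z_f$ by general members of the linear systems $|\mathcal{O}(w_i)|$ and $|\mathcal{O}(w_j)|$, or by appropriate multiples, to obtain a curve through $p$ not contained in $\mathrm{Supp}\,D$. Then compare local intersection multiplicity with the global degree, using the degree formula for substacks recorded above:
$$\mathrm{mult}_{\tilde p}\tilde D\;\leq\; D\cdot H_i\cdots \leq \frac{I\,d}{w_iw_j}\cdot(\text{stabilizer factors}).$$
Here the orbifold stabilizer order enters both the local lift and the global degree, and the two cancel. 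This yields $\mathrm{mult}_{\tilde p}\tilde D\leq dI/\min_{i,j}w_iw_j$, hence the bound on $\alpha$. Combining with the hypothesis $dI<\frac{n}{n-1}\min w_iw_j$ gives $\alpha(Z_f)>\frac{n-1}{n}$, and the criterion of \cite{DK} finishes the proof.

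The main obstacle is the multiplicity estimate at points with nontrivial stabilizer, and the choice of cutting sections there. The sections must avoid $\mathrm{Supp}\,D$, pass through $p$, and meet in a curve. Their degrees must be measured correctly in the orbifold chart, and it is exactly this that brings in the minimal products $w_iw_j$. Quasismoothness, through the conditions of the quasismoothness criterion (Lemma 2.2 in the five-variable case), guarantees that the local chart is smooth and that the relevant coordinate hyperplanes cut transversally. For the detailed bookkeeping I would follow \cite{BBG}, Corollary 5.4.8, and \cite{BGK}.
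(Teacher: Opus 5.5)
Your skeleton is right and matches the argument behind the result the paper cites (\cite{BBG}, Corollary 5.4.8; \cite{BGK}). You apply the Demailly--Koll\'ar criterion and prove $\alpha(Z_f)\geq \min_{i\neq j}w_iw_j/(dI)$ from a pointwise multiplicity bound for $D\sim_{\mathbb{Q}}\mathcal{O}_{Z_f}(I)$. The paper redoes this computation for $n=4$ in the first part of the proof of Proposition 4.1.

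The gap is the one step you actually argue, the multiplicity estimate, which fails as written.

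\begin{itemize}
\item Dimension count. $D$ has dimension $n-2$, so comparing $\mathrm{mult}_pD$ with a global intersection number needs $n-2$ cutting hypersurfaces, not two. Two cuts give a curve only when $n=4$.
\item Degrees. Cutting by $H_i\in|\mathcal{O}(w_i)|$ and $H_j\in|\mathcal{O}(w_j)|$ puts $w_iw_j$ in the numerator. For $n=4$ you get $D\cdot H_i\cdot H_j=I\,w_iw_j\,d/(w_0\cdots w_4)$, not $Id/(w_iw_j)$. No ``cancellation of stabilizer factors'' turns one into the other. The local intersection number in an orbifold chart is $|G_p|$ times the stack-theoretic local contribution, and that factor has to be bounded, not cancelled.
\item Existence. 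For the smallest weights such members usually do not pass through $p$ at all. If $z_i$ is the only variable of weight $w_i$ and $z_i(p)\neq 0$, then $|\mathcal{O}(w_i)|=\langle z_i\rangle$. This is why the correct bound is expressed through the \emph{largest} weights.
\end{itemize}

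The missing ingredient is the Johnson--Koll\'ar estimate (\cite{JK1}, Proposition 11; Lemma 4.2 here). For an $m$-dimensional $Y\subset\mathbb{P}(\mathbf w)$ not contained in the singular locus, $\mathrm{mult}_pY$ is at most the product of the $m+1$ largest weights times $\deg Y$.

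\begin{itemize}
\item Apply it to each component of $D$ with $m=n-2$. Well-formedness ensures no component of $D$ lies in the singular locus.
\item Use $\deg D=Id/(w_0\cdots w_n)$. The $n-1$ largest weights cancel, giving $\mathrm{mult}_pD\leq Id/\min_{i\neq j}w_iw_j$ directly.
\item Multiplicity is intrinsic, so this also bounds the lift of $D$ in the orbifold chart of $Z_f$, which is smooth by quasismoothness.
\item Lemma 4.1 then makes the lifted pair $(\text{chart},\lambda\tilde D)$ klt for $\lambda<\min w_iw_j/(Id)$.
\item This descends to $(Z_f,\lambda D)$ because the charts are \'etale in codimension one (Remark 2.2).
\end{itemize}

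With that replacement your argument closes: $\alpha(Z_f)\geq \min w_iw_j/(dI)>(n-1)/n$.
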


One interesting  feature of the previous theorem is related with the important problem in differential geometry: the search for Einstein metrics on odd dimensional manifolds. Indeed,  recall \cite{Mi} that for a  weighted homogeneous  polynomial $f \in \mathbb{C}\left[z_0, \ldots, z_n\right],$  the link $L_f$ of the singularity is defined to be 
$$
L_f=V_f \cap \mathbb{S}_{\varepsilon},
$$
with  $\mathbb{S}_{\varepsilon}$  a sufficiently small $(2n+1)$-sphere centered at $\bf{0}$. For the quasismooth case it  is well-known that
the link  is a smooth ($2n-1$)-dimensional manifold and moreover,  we have the following correspondence: the manifold $L_f$ admits Sasaki-Einstein metric $g$ if and only if the Fano orbifold  $(V_f-\{ 0\})/\mathbb{C}^*=Z_f$ admits a Kähler-Einstein orbifold metric $h$ \cite{BBG} and furthermore, the metric cone  $(Y,\bar{g})=(L_f \times \mathbb{R}^{+}, \bar{g}=d r^2+r^2 g)$  is  Ricci-flat.  

On the other hand, in the  non-quasismooth case the Milnor fibration is still valid but  the link $L_f$ is no longer smooth, since  non-quasismooth klt singularties are  key to understand the limits of K-moduli of Fano varieties and its extension to moduli of K-stable Fano cones \cite{Od},  it is natural to look for a similar or limiting  correspondence to the one given above for the quasismooth case. 
 However, to determine whether there are  singular  Kähler-Einstein metrics on  non-quasismooth hypersurfaces proves to be complicated and we appeal to the weighted cone construction developed in \cite{To} to do this  for certain hypersurfaces close enough to quasismooth hypersurfaces admitting Kähler-Einstein metrics, see Section 4.

\subsection{Monomials of interest}

Our results in the next sections are based on non-quasismooth polynomials that are designed from the monomials defining the weighted homogeneous polynomials of certain degrees $d$ and weight vectors $\mathbf{w}$. In order to do so we use previous results given in \cite{CL2} on the explicit description  of the monomials generating  $H^0(\mathbb{P}(\mathbf{w}), \mathcal{O}(d))$ for certain invertible polynomials. 
Recall that  invertible polynomials are polynomials of the form $f=\sum_{i=1}^n \prod_{j=1}^n z_j^{a_{i j}}$, where $A= \left(a_{i j}\right)_{i, j=1}^n$ is a non-negative integer-valued matrix which is invertible over $\mathbb{Q}$ and where $f$ is quasihomogeneous.  Due to the Kreuzer-Skarke classification of invertible polynomials \cite{KS} we know that any invertible polynomial, up to permutation of variables, can be written as a Thom-Sebastiani sum of three types of polynomials usually called atoms:
\begin{itemize}
\item Fermat type: $w=z^a$,
\item  Chain type: $w=z_1^{a_1} z_2+z_2^{a_2} z_3+\ldots+z_{n-1}^{a_{n-1}} z_n+x_n^{a_n}$, and
\item  Loop or cycle type: $w=z_1^{a_1} z_2+z_2^{a_2} z_3+\ldots+z_{n-1}^{a_{n-1}} z_n+z_n^{a_n} z_1$. 
\end{itemize}

We consider the following types of invertible  polynomials, which are Thom-Sebastiani sums of a cycle polynomial $z_{4}z_{2}^{a_{2}}+z_{2}z_{3}^{a_{3}}+z_{3}z_{4}^{a_{4}} $ and some other atomic block (Fermat, chain or cycle) in terms of $z_0$ and $z_1:$  
\begin{align*}
    \mbox{Type I}: & \  f =z_{0}^{a_{0}}+z_{1}^{a_{1}}+z_{4}z_{2}^{a_{2}}+z_{2}z_{3}^{a_{3}}+z_{3}z_{4}^{a_{4}} \\
    \mbox{Type II}: & \  f =z_{0}^{a_{0}}+z_{0}z_{1}^{a_{1}}+z_{4}z_{2}^{a_{2}}+z_{2}z_{3}^{a_{3}}+z_{3}z_{4}^{a_{4}} \\
    \mbox{ Type III}: & \  f =z_{1}z_{0}^{a_{0}}+z_{0}z_{1}^{a_{1}}+z_{4}z_{2}^{a_{2}}+z_{2}z_{3}^{a_{3}}+z_{3}z_{4}^{a_{4}}.  
\end{align*}
We  assume that the weight vectors associated to these families of polynomials  satisfy certain pack of conditions on the weights and degrees so that the  weighted hypersurface $X_f\subset \mathbb{P}(\bf{w})$ behaves nicely: are well-formed rational homology projective 3-spaces.  These conditions given below,  are labeled as conditions (2.1):
$$
\left \{\begin{array}{l}
\mathbf{w}=\left(w_0, w_1, w_2, w_3, w_4\right)=\left(m_3 v_0, m_3 v_1, m_2 v_2, m_2 v_3, m_2 v_4\right).\\
\gcd(m_{2},m_{3})=1.\\
 d=m_2m_3. \\
\gcd(v_{0},v_{1})=1\\  
\gcd(v_{i},v_{j})=1,  i\neq j \hbox { with } i,j\in\{2,3,4\}. \\
m_3=a_2a_3a_4+1.\\
\end{array}\right .
$$


\begin{remark} 
From the conditions (2.1) on the weights of the  polynomials given above one can extract the cycle block $g=z_{4}z_{2}^{a_{2}}+z_{2}z_{3}^{a_{3}}+z_{3}z_{4}^{a_{4}}$ of three variables which cuts out  a quasismooth rational curve  $X_{m_3}\subset \mathbb{P}(v_2,v_3, v_4)$ of degree $m_3.$  
Indeed, consider the matrix of exponents of $g$ given by $A=\left(a_{i j}\right)_{i, j}.$  The map $$\phi_A: \mathbb{P}(v_2,v_3, v_4)\rightarrow  \mathbb{P}^2$$ 
given by 
$$\left(x_2: x_3: x_4\right) \stackrel{\phi_A}{\longmapsto}\left(y_2: y_3: y_4\right), \quad y_j=\prod_{i=2}^4 x_i^{a_{i j}}.$$  
In \cite{Ko2}, Kollár  shows  that $\phi_A$ maps $\mathbb{P}(v_2,v_3, v_4)$ birationally to $\mathbb{P}^{2}$ and so $X_{m_3}$ is mapped birationally to the  hyperplane 
$\left \{ y_2+y_3+y_4=0\right \}\subset \mathbb{P}^{2}$. Thus $X_d$ is a rational curve.  A key in the argument presented in this reference (that works the details for arbitrary dimensions) is  that  the determinant  of $A $ equals the degree $m_3$, that is, $a_2a_3a_4+1=m_3.$ 
 \end{remark}

\begin{lem} Assume that $f$ is of type I, II or II and its  weight vector satisfies the conditions given in (2.1), then the orbifold $X_f$ cut out by $f$ is a rational homology projective 3-space. 
\end{lem}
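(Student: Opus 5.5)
Since $X_f$ is quasismooth (invertible polynomials satisfy the conditions of Lemma 2.2), its link $L_f$ is a smooth 7-manifold, and $X_f$ is a rational homology $\mathbb{P}^3$ exactly when $L_f$ is a rational homology 7-sphere. Equivalently, by Milnor--Orlik, the characteristic polynomial of the monodromy
$$\Delta_f(t)=\prod_{i=0}^{4}\Bigl(\frac{d}{w_i}\Bigr)^{-1}\prod(\ldots)$$
must satisfy $\Delta_f(1)\neq 0$, i.e. the Milnor lattice has no eigenvalue $1$ under the monodromy. So the plan is to compute the Alexander polynomial and show it does not vanish at $t=1$. Since $f=f_1\oplus g$ is a Thom--Sebastiani sum, the monodromy eigenvalues of $f$ are the products $\lambda\mu$ with $\lambda$ an eigenvalue for $f_1(z_0,z_1)$ and $\mu$ one for the cycle block $g(z_2,z_3,z_4)$. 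Thus we must show that no product $\lambda\mu$ equals $1$.

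First, we compute the orders of these eigenvalues. The quasihomogeneity of $f$ gives rational weights $q_i=w_i/d$. By (2.1), $q_0=v_0/m_2$ and $q_1=v_1/m_2$, while $q_j=v_j/m_3$ for $j=2,3,4$. Every eigenvalue of the monodromy of $f_1$ (Fermat, chain or loop in two variables) is of the form $\exp(2\pi i\,\ell)$ with $\ell\in\frac{1}{m_2}\mathbb{Z}$, because the Milnor algebra monomials $z^\beta$ contribute $\exp(2\pi i\sum(\beta_k+1)q_k)$. Similarly, every eigenvalue of $g$ lies in the group of $m_3$-th roots of unity. Since $\gcd(m_2,m_3)=1$, a product $\lambda\mu=1$ forces $\lambda=\mu^{-1}$ to be both an $m_2$-th and an $m_3$-th root of unity, hence $\lambda=\mu=1$.

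The key remaining step, which I expect to be the main obstacle, is to show that $1$ is not an eigenvalue for both blocks. For the cycle block $g$, Remark 2.1 says $g$ cuts out a rational curve $X_{m_3}\subset\mathbb{P}(v_2,v_3,v_4)$. The link of $g$ is a Seifert bundle over this curve, so the multiplicity of eigenvalue $1$ for $g$ equals $b_1$ of the 3-dimensional link, which is $2g(X_{m_3})=0$. Hence $1$ is not an eigenvalue of $g$. If this Seifert argument is too delicate, I will instead verify it directly from the Milnor--Orlik formula for loop polynomials: the exponent-sum description $\sum(\beta_k+1)q_k\in\mathbb{Z}$ has no solutions among the basis monomials of the Milnor algebra, using $\det A=a_2a_3a_4+1=m_3$ and $\gcd(v_i,v_j)=1$. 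With this in hand the three types are treated uniformly, since no condition on the eigenvalues of $f_1$ is needed. Therefore $\Delta_f(1)\neq0$, so $H_3(L_f,\mathbb{Q})=0$. By Poincaré duality and the Gysin sequence of the Seifert $S^1$-bundle $L_f\to X_f$ (together with the Lefschetz-type connectivity of links), $H^*(X_f,\mathbb{Q})\cong H^*(\mathbb{P}^3,\mathbb{Q})$. Finally, well-formedness follows from the gcd conditions in (2.1) via Lemma 2.1.
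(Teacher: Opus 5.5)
Your proof is correct, and its outer frame is the paper's: $L_f$ is a rational homology $7$-sphere if and only if $X_f$ is a rational homology $\mathbb{P}^3$, by the rational Gysin/Leray sequence of the Seifert $S^1$-bundle $L_f\to X_f$. Where you differ is in how you show that $L_f$ is a rational homology sphere. The paper only sketches this step, deferring to the explicit Milnor--Orlik computations of \cite{BGN1,CL}, carried out type by type.

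You instead argue structurally:
\begin{enumerate}
\item By Thom--Sebastiani the monodromy is $h_1\otimes h_g$.
\item Since $q_0,q_1\in\frac{1}{m_2}\mathbb{Z}$ and $q_2,q_3,q_4\in\frac{1}{m_3}\mathbb{Z}$, the eigenvalues of $h_1$ and $h_g$ are $m_2$-th and $m_3$-th roots of unity respectively. Hence $\gcd(m_2,m_3)=1$ reduces everything to whether $1$ is an eigenvalue of $h_g$.
\item The multiplicity of $1$ for $h_g$ is $b_1$ of the $3$-dimensional link of $g$. This equals twice the genus of $\{g=0\}\subset\mathbb{P}(v_2,v_3,v_4)$, because the Euler class of the Seifert fibration is nonzero, and so it vanishes by Remark 2.1.
\end{enumerate}
This route treats types I--III uniformly and shows that the $(z_0,z_1)$-block matters only through having an isolated singularity. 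It also isolates exactly where (2.1) is used: $\gcd(m_2,m_3)=1$ and the rationality of the cycle curve. What it does not give, unlike the explicit Milnor--Orlik computation, is $\Delta_f$ itself, and hence the order of $H_3(L_f,\mathbb{Z})$.

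Two minor points. First, your displayed formula for $\Delta_f$ is garbled and can simply be deleted, since you only use the monomial description of the eigenvalues. Second, the closing sentence on well-formedness is unnecessary, since the rational Gysin sequence does not see it. It is also not implied by (2.1) alone. For type I one has $a_1v_1=m_2$, so $\gcd(w_1,w_2,w_3,w_4)=\gcd(v_1,m_2)=v_1$. For example, $\mathbf{w}=(10,15,24,6,6)$ with $d=30$ satisfies (2.1), but $\mathbb{P}(\mathbf{w})$ is not well-formed.
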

\begin{proof} Here we only give a sketch of this proof since the main arguments are presented elsewhere. The idea is to first  compute the cohomology groups of the links 
$L_f$ of the isolated singularities of $f$ using the Milnor-Orlik formula \cite{MO} for weighted polynomials and conclude that the links are rational homology spheres and this was done in  \cite{BGN1} ({\it c.f.} \cite{CL}). Since the link $L_f$ is a Seifert $S^1 $-bundle over the orbifold $X_f \subset \mathbb{P}\left(w_0, \ldots, w_4\right)$ the Leray spectral sequence 
given by 
$H^i\left(X_f, R^j \pi_* \mathbb{Q}_{L_f}\right) \Rightarrow H^{i+j}(L_f, \mathbb{Q})$
gives \cite{OW, Ko4}: 
\begin{itemize}
\item  $\operatorname{dim} H^i(L_f, \mathbb{Q})=\operatorname{dim} H^i(X_f, \mathbb{Q})-\operatorname{dim} H^{i-2}(X_f, \mathbb{Q})$ for $i \leq \operatorname{dim} X_f$,
\item  $\operatorname{dim} H^{i+1}(L_f, \mathbb{Q})=\operatorname{dim} H^i(X_f, \mathbb{Q})-\operatorname{dim} H^{i+2}(X_f, \mathbb{Q})$ for $i \geq \operatorname{dim} X_f$.
\end{itemize}
Thus, $L_f$ is a rational homology 7-sphere if and only if $X_f$ is a rational homology complex projective 3-space. The lemma follows from this observation.
\end{proof}

In \cite{CL2} we showed the following result: 
\begin{theo} 
Assume that $f$ and its  weight vector are as  in the previous lemma, then 
 \begin{itemize}
\item    If $f$ is a polynomial of type I, then 
$$H^0(\mathbb{P}(\mathbf{w}), \mathcal{O}(d))=Span \left\{z_1^{m_2}, z_0 z_1^{m_2-1}, \ldots, z_0^{m_2-1} z_1, z_0^{m_2}, z_4 z_2^{a_2}, z_2 z_3^{a_3}, z_3 z_4^{a_4}\right\}.$$ 

\item If $f$ is a polynomial of type II and its associated weight vector $\bf w$ does not admit polynomial of type I, then $$H^0(\mathbb{P}(\mathbf{w}), \mathcal{O}(d))=Span \left \{z_{0}^{m_{2}-kv_{1}}z_{1}^{k},   z_{4}z_{2}^{a_{2}},   z_{2}z_{3}^{a_{3}},   z_{3}z_{4}^{a_{4}},\mbox{ where }  0\leq k  \leq \left\lfloor \dfrac{m_{2}}{v_{1}}\right \rfloor \right \} .$$

\item If $f$ is a polynomial of type III and its associated weight vector $\bf w$ does not admit polynomial of type II, then 
$$H^0(\mathbb{P}(\mathbf{w}), \mathcal{O}(d))=Span \left \{z_{0}^{a_0-kv_{1}}z_{1}^{1+kv_0},   z_{4}z_{2}^{a_{2}},   z_{2}z_{3}^{a_{3}},   z_{3}z_{4}^{a_{4}}, \mbox{ where }  0\leq k  \leq \left\lfloor \dfrac{m_{2}}{v_{0}v_{1}}\right \rfloor\right \}.$$ 
\end{itemize}
\end{theo}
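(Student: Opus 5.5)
The statement asks for all monomials $z_0^{e_0}z_1^{e_1}z_2^{e_2}z_3^{e_3}z_4^{e_4}$ of weighted degree $d=m_2m_3$. The plan is to separate the variables into the $z_0,z_1$ block and the cycle block by a divisibility argument. Write the degree condition as
$$m_3(v_0e_0+v_1e_1)+m_2(v_2e_2+v_3e_3+v_4e_4)=m_2m_3 .$$
Since $\gcd(m_2,m_3)=1$ (conditions (2.1)), reducing mod $m_3$ gives $m_3\mid v_2e_2+v_3e_3+v_4e_4$, and reducing mod $m_2$ gives $m_2\mid v_0e_0+v_1e_1$. Both sums are non-negative and the two weighted pieces add up to $d$. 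Hence either the $(z_0,z_1)$-part is trivial and the cycle part has degree exactly $m_3$ in $\mathbb{P}(v_2,v_3,v_4)$, or the cycle part is trivial and $v_0e_0+v_1e_1=m_2$. A mixed monomial would force $v_0e_0+v_1e_1=m_2k$ and $v_2e_2+v_3e_3+v_4e_4=m_3(1-k)$ with $0<k<1$, which is impossible. So $H^0(\mathcal{O}(d))$ is the direct sum of the two pieces, and the two pieces can be treated separately.

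\textbf{Cycle part.} I will show that the only monomials of degree $m_3$ in $\mathbb{P}(v_2,v_3,v_4)$ are $z_4z_2^{a_2},z_2z_3^{a_3},z_3z_4^{a_4}$. The weights are determined by the cycle relations $a_2v_2+v_4=a_3v_3+v_2=a_4v_4+v_3=m_3$ with $m_3=a_2a_3a_4+1=\det A$. Solving gives $v_2=a_3a_4-a_4+1$ and its cyclic analogues. For a general solution $e_2v_2+e_3v_3+e_4v_4=m_3$, I would use the lattice description: integer solutions form a coset of the rank-two kernel lattice. Since $\det A=m_3$, one can check that this kernel is spanned by differences of rows of $A$. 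One then verifies that no other point of the coset lies in the non-negative orthant. Concretely, I would do case analysis on which $e_i$ are large: if $e_2\ge a_2+1$ then $(a_2+1)v_2>m_3$ unless $v_4>v_2$, and so on, bounding each exponent by $a_i$ and then checking the box directly. This step is the main obstacle. The bounding inequalities need the hypothesis that the $v_i$ are pairwise coprime, and possibly an argument that a fourth monomial would make the link have extra rational cohomology, contradicting the previous lemma. If the direct inequality approach stalls, I would fall back on this Milnor–Orlik/rational homology sphere argument: an extra monomial of degree $m_3$ gives an extra deformation of the curve and hence positive genus, contradicting rationality from the Remark.

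\textbf{$z_0,z_1$ part.} The monomials with $v_0e_0+v_1e_1=m_2$ are listed according to type.

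In Type I, we have $a_0v_0=a_1v_1=m_2$ with $\gcd(v_0,v_1)=1$. Coprimality forces $v_0\mid m_2$ and $v_1\mid m_2$. In the Fermat situation one gets $v_0=v_1=1$ after well-formedness: for instance $\gcd(w_1,\dots,w_4)$ must divide $d$ and the weights must be well formed. Then every $z_0^{j}z_1^{m_2-j}$ occurs.

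In Type II, $a_0v_0=m_2$ and $v_0+a_1v_1=m_2$, so $v_0=1$. The solutions of $e_0+v_1e_1=m_2$ are then $e_1=k$, $e_0=m_2-kv_1$ with $0\le k\le\lfloor m_2/v_1\rfloor$.

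In Type III, the relations $a_0v_0+v_1=v_0+a_1v_1=m_2$ together with $\gcd(v_0,v_1)=1$ parametrize all solutions as $(e_0,e_1)=(a_0-kv_1,\,1+kv_0)$. Non-negativity gives the stated range. The hypothesis that $\mathbf w$ admits no Type II (respectively no Type I) polynomial is exactly what excludes the extra solution families, such as $e_1=0$, which would correspond to a pure power of $z_0$.
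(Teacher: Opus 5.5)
The paper does not prove this statement (it is quoted from \cite{CL2}), so I assess your argument on its own terms. Your splitting step is correct and is the right first move. Reducing the degree equation modulo $m_3$ and modulo $m_2$ and using $\gcd(m_2,m_3)=1$ gives $v_0e_0+v_1e_1=m_2k$ and $v_2e_2+v_3e_3+v_4e_4=m_3l$ with $k+l=1$, so there are no mixed monomials.

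The genuine gap is the cycle part. This is the real content of the theorem, and you leave it open. Your lattice reduction is fine, but the orthant check is not carried out, and neither of your proposed ways of doing it works.
\begin{itemize}
\item The box argument cannot bound all exponents. The bound $(a_2+1)v_2>m_3$ holds iff $v_2>v_4$, and the three cyclic conditions $v_2>v_4$, $v_3>v_2$, $v_4>v_3$ cannot all hold. For instance, for $\mathbf{w}=(66,231,185,259,481)$ one has $(v_2,v_3,v_4)=(5,7,13)$, $m_3=33$, $a_2=4$, and $e_2$ is a priori allowed up to $6$.
\item The fallback is false: extra monomials of degree $m_3$ do not force positive genus. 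Take $(a_2,a_3,a_4)=(1,2,2)$. Then $(v_2,v_3,v_4)=(3,1,2)$ is pairwise coprime and $m_3=5=\det A$. The curve is rational: in the chart $z_3=1$ it is the graph $z_2=-z_4^2/(1+z_4)$. Yet $z_3^5$ and $z_3^3z_4$ have degree $m_3$.
\end{itemize}
The same example shows that pairwise coprimality is not what drives the argument; it is only needed to get $\gcd(v_2,v_3,v_4)=1$ in the lattice step. The statement tacitly requires $a_i\ge 2$, as is standard for loop atoms, and your proof has to use it.

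To close the gap, write $v=(v_2,v_3,v_4)$ and let $r_1=(a_2,0,1)$, $r_2=(1,a_3,0)$, $r_3=(0,1,a_4)$ be the rows of $A$. The lattice they span has index $\det A=m_3$ in $\mathbb{Z}^3$. It lies in $\{e : v\cdot e\in m_3\mathbb{Z}\}$, which also has index $m_3$ because $\gcd(v_2,v_3,v_4)=1$. So the two coincide, and every solution of $v\cdot e=m_3$ has the form $e=c_1r_1+c_2r_2+c_3r_3=(a_2c_1+c_2,\,a_3c_2+c_3,\,c_1+a_4c_3)$ with $c_i\in\mathbb{Z}$ and $c_1+c_2+c_3=1$. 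Now suppose $c_1\le -1$. Non-negativity of the first and third coordinates gives $c_2\ge a_2|c_1|$ and $c_3\ge 1$. Hence $c_1+c_2+c_3\ge (a_2-1)|c_1|+1\ge 2$, a contradiction. The cases $c_2\le -1$ and $c_3\le -1$ are the same after cyclic permutation. So $c\ge 0$ with $\sum c_i=1$, i.e.\ $c$ is a standard basis vector, and only the three cycle monomials occur.

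Smaller repairs:
\begin{itemize}
\item $v_0=1$ in Types I--II, and $v_1=1$ in Type I, come from well-formedness of $\mathbb{P}(\mathbf{w})$. Indeed $\gcd(w_j,w_2,w_3,w_4)=\gcd(v_j,m_2)$ must be $1$, while $v_j\mid m_2$ from $a_jv_j=m_2$. The condition you quote, that this gcd divides $d$, is vacuous.
\item In Type III, non-negativity gives $0\le k\le\lfloor a_0/v_1\rfloor$. You should check that this equals $\lfloor m_2/(v_0v_1)\rfloor$, which follows from $v_1\mid a_0-1$ and $v_0\ge 2$.
\item $v_2=a_3a_4-a_3+1$, not $a_3a_4-a_4+1$; the latter is $\tilde w_2/m_2$ from Lemma 3.1.
\end{itemize}
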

\smallskip

For the weight vector $\textbf{w}$ given above, we consider  hypersurfaces $X\subset\mathbb{P}(\textbf{w})$ defined by  equations of the form 
\begin{equation}    
X:f=h(z_{0},z_{1})+z_{4}z_{2}^{a_{2}}+z_{2}z_{3}^{a_{3}}+z_{3}z_{4}^{a_{4}}=0
\end{equation}
where $h(z_{0},z_{1})$ are  binomials formed by elements in $H^0(\mathbb{P}(\mathbf{w}), \mathcal{O}(d))$ chosen in such a way that the hypersurface $X$ is  not quasismooth.   We find conditions on the exponents of $z_{0}$ and $z_{1}$ in $h$ so that the subvariety $X$ is klt and exceptional.


\subsection{Singularities of the minimal model program}
For  precise definitions of the singularities of the minimal model appearing in this section and their possible relations see \cite{Ko3, KM}.   For sake of completeness, below we give  definitions of the singularities appearing in this article: klt,  canonical and log canonical singularties but we would rather give them in terms of the notion of minimal log discrepancies developed by  Shokurov \cite{Sh} 

 Consider the log pair $(X, D),$ with  $X$  a normal complex variety and $D$ an effective $\mathbb{Q}$-divisor on $X$ such that $K_X+D$ is $\mathbb{Q}$-Cartier, where $K_X$ denotes  the canonical divisor of $X.$ 
Let  $$\sigma: Y \rightarrow X$$ be a  proper birational morphism from another normal variety $Y$ and $E \subset Y$ be a prime divisor. If $\omega$ is a top rational form on $X$ defining $K_X$, we consider $K_Y$ the canonical divisor defined by $\sigma^*\omega.$  

\begin{defi} The log discrepancy of $E$ with respect to the morphism 
$\sigma$ as above is defined as: 
$$
a_E(X, D):=\operatorname{ord}_E\left(K_{Y}+E-\sigma^*\left(K_X+D\right)\right) =1-\operatorname{ord}_E\left (\sigma^*\left(K_X+D\right)-K_Y\right ).$$
 \end{defi}
 
 The $\log$ discrepancy depends only on the discrete valuation defined by $E$ on $k(X)$, in particular it is independent on the extraction $Y$ where $E$ appears as a divisor.

\begin{defi}
 For any point $x$ of the scheme $X$,  the minimal log discrepancy at $x$ is defined by  
$$
\operatorname{mld}(X, D, x):=\inf_{\sigma(E)=\bar{x}} \left\{a_E(X, D)\right\},
$$
where the infimum is taken over  all prime divisors $E$ on proper birational morphisms $\sigma: Y \rightarrow X.$ 
\end{defi}
If this infimum is non-negative one says that the pair $(X,D)$ has a log canonical singularities at $x.$ In this case, as explained in \cite{Ko3}, by Hironaka,  there exists a proper birational morphism $\sigma: Y \rightarrow X$ such that $Y$ is nonsingular, $\sigma^{-1}(\bar{x})$ is a divisor on $Y$, and there exists a simple normal crossings divisor $\sum_i E_i$ on $Y$ which supports both $\sigma^{-1}(\bar{x})$  and $K_{Y}-\sigma^*\left(K_X+D\right)$. Then

$$
\operatorname{mld}(X, D, x)=\min _{\sigma\left(E_i\right)=\bar{x}} a_{E_i} \left( X, D\right).
$$

\begin{defi}
 The global minimal log discrepancy of the variety $X$ is defined as the set 
$$
\operatorname{mld}(X, D):=\inf _{x \in X} \operatorname{mld}(X, D, x) .
$$
where the infimum  is taken over all codimension $\geq 1$ points $x$ on $X$ (recall that the codimension of a point is defined to be the codimension of its closure). The pair $(X, D)$ as above is:
\begin{itemize}
\item {canonical}  if $\operatorname{mld}(X, D) \geq 1$ for every point $x \in X$ of codimension at least 2 . 
\item  {log canonical (lc)} if $\operatorname{mld}(X, D) \geq 0$. 
\item   {Kawamata log terminal (klt)} if $\operatorname{mld}(X, D)>0.$
\end{itemize}
\end{defi}

Since quotient singularities are klt (see \cite{Ko3}, Section 3.2), quasismooth varieties are klt. However, proving the klt property for the non-quasismooth varieties that we manufacture  as deformations of quasismooth hypersurfaces  turns out to be  subtle. 
\medskip


The following remark will be used implicitly in Section 4:
\begin{remark} As pointed out in \cite{To}, if the stabilizer groups are trivial in codimension 1,  one does not need to distinguish between the property of being klt or lc for a normal Deligne-Mumford stack and for its associated coarse moduli space; this follows from the fact that for a pair $(X, D),$ these properties  do not change under finite coverings of normal varieties which are étale in codimension 1 \cite{Ko2} Corollary 2.43.  
\end{remark}

\subsection{Exceptionality.}  

Recall that for a given  divisor $D$ on a variety $X$, we write $D\sim_{\mathbb{Q}} -K_{X}$ when the divisor $D$ is $\mathbb{Q}$-linearly equivalent to $-K_{X}$. Thus, we define the set:
$$\vert -K_{X}\vert_{\mathbb{Q}}=\{ D \mbox{ is }  \mathbb{Q}\mbox{-divisor} / \ D \mbox{ is effective and } D\sim_{\mathbb{Q}} -K_{X}\} $$
 \begin{defi}
     A klt Fano variety $X$ is exceptional if for any $\mathbb{Q}$-divisor $D\in |-K_{X}|$, the pair $(X,D)$ is klt.
 \end{defi}
Another way to describe the exceptionality of a variety $X$ is through the notion of \textit{global log canonical threshold}. 

\begin{defi} For an effective $\mathbf{Q}$-Cartier $\mathbf{Q}$-divisor $D$ on a klt Fano variety $X$, the $\log$ canonical threshold $\operatorname{lct}(X, D)$ is the supremum of the real numbers $\lambda$ such that the pair $(X, \lambda D)$ is log canonical (lc). The global log canonical threshold of $X$ is  the real number
    $$\operatorname{glct}(X)=\sup\{\lambda\in\mathbb{R} : (X,\lambda D) \mbox{ is log canonical  for all }D\in|-K_{X}|\}.$$
\end{defi}

We have the following equivalence which is due to Birkar \cite{Bi2}:
\begin{lem}
Let $X$ be a klt Fano variety. Then $X$ is exceptional if and only if $\operatorname{glct}(X)>1$.
\end{lem}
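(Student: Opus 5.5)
The approach is to unwind both definitions and compare them directly. The only extra ingredient is the standard fact that the log canonical threshold behaves monotonically. We will show the two implications separately. Throughout, note that $X$ klt means $(X,0)$ is klt, and that for effective $D$ the set of $\lambda\ge 0$ with $(X,\lambda D)$ lc is an interval $[0,\operatorname{lct}(X,D)]$. This holds because discrepancies $a_E(X,\lambda D)=a_E(X,0)-\lambda\operatorname{ord}_E\sigma^*D$ are non-increasing in $\lambda$, and the lc condition is closed.

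First assume $\operatorname{glct}(X)>1$. Pick $\lambda$ with $1<\lambda<\operatorname{glct}(X)$, so $(X,\lambda D)$ is lc for every $D\in|-K_X|_{\mathbb{Q}}$. We will show $(X,D)$ is klt. For any prime divisor $E$ over $X$, set $c=\operatorname{ord}_E\sigma^*D\ge 0$ and $a=a_E(X,0)>0$, using that $X$ is klt. Then $a-\lambda c\ge 0$. If $c>0$, this gives $a-c>a-\lambda c\ge 0$; if $c=0$, then $a-c=a>0$. Hence every log discrepancy $a_E(X,D)>0$.

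To make this honest for the infimum in the definition of mld, we pass to a single log resolution of $(X,D)$, where the mld is computed as a minimum over finitely many $E_i$. So $\operatorname{mld}(X,D)>0$ and $(X,D)$ is klt, which means $X$ is exceptional.

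Conversely, assume $X$ is exceptional, and suppose for contradiction that $\operatorname{glct}(X)\le 1$. The key step, and the main obstacle, is to produce a single divisor with $\operatorname{lct}\le 1$: the supremum defining $\operatorname{glct}$ need not be attained, so we cannot simply pick a $D$ with $\operatorname{lct}(X,D)=\operatorname{glct}(X)$. Following Birkar, we would use that exceptionality is an open condition. Concretely, $(X,D)$ klt for all $D$ implies, by boundedness of complements and the ACC/discreteness of lc thresholds for divisors in $|-K_X|_{\mathbb{Q}}$ with bounded denominators (Birkar's theorem that $\operatorname{glct}$ is attained, or equivalently that exceptional Fanos have $\operatorname{glct}>1$), the following: if $\operatorname{glct}(X)\le 1$, there is $D\in|-K_X|_{\mathbb{Q}}$ with $\operatorname{lct}(X,D)\le 1$.

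Given such a $D$, the pair $(X,D)$ is not klt. Indeed, if $(X,D)$ were klt, then by openness of the klt condition in the coefficient (finitely many $E_i$ on a log resolution), $(X,(1+\epsilon)D)$ would be klt for small $\epsilon>0$, giving $\operatorname{lct}(X,D)>1$. This contradicts exceptionality.

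Since the paper attributes this equivalence to \cite{Bi2}, we would cite Birkar for the attainment step rather than reprove it, and present only the elementary direction in full.
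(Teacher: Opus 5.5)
Your proposal is correct and consistent with the paper, which gives no argument of its own and simply attributes the equivalence to Birkar \cite{Bi2}. Your proof of the implication $\operatorname{glct}(X)>1\Rightarrow$ exceptional is sound, and for the converse you correctly identify the one deep input: Birkar's theorem that the log canonical threshold of the anticanonical system is computed by some divisor when it is at most $1$. That input is in fact only needed in the boundary case $\operatorname{glct}(X)=1$. If $\operatorname{glct}(X)<1$, the definition of the supremum already gives a $D$ with $(X,D)$ not lc, so $X$ is not exceptional.
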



\section{klt weighted Fano 3-folds}
In this section, we study the singularities of certain perturbations of invertible polynomials that turn out to be non-quasismooth. More precisely, we show that these perturbations are klt (even though they are not quotient singularties). In order to do this, we review a  criterion studied by Ishii and Prokohorov \cite{IP}, suitable  for hypersurfaces embedded in toric varieties, which  determines whether the singularities in the set of zeroes of a polynomial cutting out the weighted hypersurface  are canonical. But first we need a technical lemma  designed for weighted hypersurfaces  with certain 
numerical constraints whose  corresponding links are rational homology 7-spheres (\cite{BGN1, CL}).  

\begin{lem}
    Given a weight vector ${\bf w}=(w_{0},w_{1},w_{2},w_{3},w_{4})=(m_{3}v_{0},m_{3}v_{1},m_{2}v_{2},m_{2}v_{3},m_{2}v_{4}) $ associated with a quasi-homogeneous  polynomial $f$ of type I,II or III of degree $d=m_{2}m_{3}$. If $m_{3}=a_{2}a_{3}a_{4}+1$, then the integers  
    $$\tilde{w}_{2}=m_{2}(a_{4}a_{3}-a_{4}+1), \ \ \tilde{w}_{3}=m_{2}(a_{2}a_{4}-a_{2}+1) \ \ \mbox{ and } \ \ \tilde{w}_{4}=m_{2}(a_{2}a_{3}-a_{3}+1)$$
    satisfy the equalities:
    \begin{center}
$\begin{cases}
	a_{2}\tilde{w}_{2}+\tilde{w}_{3}=d\\
	a_{3}\tilde{w}_{3}+\tilde{w}_{4}=d \\
    a_{4}\tilde{w}_{4}+\tilde{w}_{2}=d
 \end{cases}$
\end{center}
Moreover, these verify $\tilde{w}_{2}+\tilde{w}_{3}+\tilde{w}_{4}=w_{2}+w_{3}+w_{4}$.
\end{lem}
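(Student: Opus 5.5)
The plan is to verify both claims by direct substitution, using only the relation $m_3=a_2a_3a_4+1$ and the fact that $f$ is quasi-homogeneous of degree $d=m_2m_3$.

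\emph{The three equalities.} First I expand
$$a_2\tilde w_2+\tilde w_3=m_2\bigl(a_2a_3a_4-a_2a_4+a_2+a_2a_4-a_2+1\bigr)=m_2(a_2a_3a_4+1)=m_2m_3=d.$$
The second and third equalities follow in the same way. The triple $(\tilde w_2,\tilde w_3,\tilde w_4)$ and the index triple $(a_2,a_3,a_4)$ are built cyclically, so the same cancellation occurs under the shift $2\to3\to4\to2$. Hence only this one computation needs to be written out.

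\emph{Identifying $v_2,v_3,v_4$.} For the sum identity I would determine $v_2,v_3,v_4$ explicitly. Since $f$ is quasi-homogeneous of degree $d$, the cycle block monomials $z_4z_2^{a_2}$, $z_2z_3^{a_3}$, $z_3z_4^{a_4}$ all have degree $d$ with respect to $\mathbf w$. Dividing by $m_2$ gives the linear system
$$a_2v_2+v_4=m_3,\qquad a_3v_3+v_2=m_3,\qquad a_4v_4+v_3=m_3.$$
Its coefficient matrix has determinant $a_2a_3a_4+1=m_3\neq 0$, as in Remark 2.1, so the solution is unique. I would solve it by successive substitution: $v_2=m_3-a_3v_3$, then $v_3=m_3-a_4v_4$, then $v_4=m_3-a_2v_2$. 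This gives $v_2(1+a_2a_3a_4)=m_3(1-a_3+a_3a_4)$, so
$$v_2=a_3a_4-a_3+1,\qquad v_3=a_2a_4-a_4+1,\qquad v_4=a_2a_3-a_2+1,$$
where the last two come by cyclic symmetry.

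\emph{The sum identity.} Both sums now have the same closed form:
$$w_2+w_3+w_4=m_2\bigl(a_2a_3+a_3a_4+a_2a_4-(a_2+a_3+a_4)+3\bigr)=\tilde w_2+\tilde w_3+\tilde w_4.$$
In words, $\tilde w$ solves the transposed system, and the same three products and three linear terms appear, only distributed differently among the coordinates.

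The only step requiring care is the one identifying $v_2,v_3,v_4$. It relies on the cycle block being part of $f$ with the stated orientation, which fixes which exponent pairs with which variable. One also needs the determinant to be nonzero, which is automatic here. Everything else is routine algebra.
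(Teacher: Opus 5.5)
Your proposal is correct and follows the paper's proof. Both expand $a_2\tilde w_2+\tilde w_3=m_2m_3=d$ directly, getting the other two by the same computation, and both read off $w_2=m_2(a_3a_4-a_3+1)$, $w_3=m_2(a_2a_4-a_4+1)$, $w_4=m_2(a_2a_3-a_2+1)$ from the degrees of the cycle monomials before comparing sums. The only difference is that you actually solve the linear system for $v_2,v_3,v_4$, whereas the paper merely asserts these values with ``it follows that''.
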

\begin{proof}
    As $d=m_{2}m_{3}$ and $m_{3}=a_{2}a_{3}a_{4}+1$, we have 
    $$a_{2}\tilde{w}_{2}+\tilde{w}_{3}=m_{2}\left(a_{2}(a_{4}a_{3}-a_{4}+1)+a_{2}a_{4}-a_{2}+1\right)=m_{2}m_{3}=d.$$
    Similarly one can prove the other equalities. On the other hand, since each of  the monomials $z_{4}z_{2}^{a_{2}}, z_{2}z_{3}^{a_{3}}$ and $z_{3}z_{4}^{a_{4}}$ have degree $d$, it follows that  $w_{2}=m_{2}(a_{4}a_{3}-a_{3}+1)$, $w_{3}=m_{2}(a_{2}a_{4}-a_{4}+1)$, and $w_{4}=m_{2}(a_{2}a_{3}-a_{2}+1)$. Then it is straightforward  to verify that $\tilde{w}_{2}+\tilde{w}_{3}+\tilde{w}_{4}=w_{2}+w_{3}+w_{4}$.
\end{proof}

Now, let us recall the notion of the Newton polyhedra:  write the monomial  $x_0^{a_0} \cdots x_n^{a_n}$ as $\mathbf{x}^\mathbf{a}$, where $\mathbf{a}=\left(a_0, \ldots, a_n\right) \in \mathbb{Z}^{n+1}.$ Given a polynomial function 
$\displaystyle{f(x)=\sum_{\mathbf{a} \in \mathbb{Z}_{+}^{n+1}} c_{\mathbf{a}} \mathbf{x}^\mathbf{a}}$, then the support of $f$ is defined as  
$$\operatorname{supp}(f):=\left\{\mathbf{a} \in \mathbb{Z}_{+}^{{n+1}} \mid c_{\mathbf{a}} \neq 0\right\}.$$ The 
Newton polyhedron $\Gamma_{+}(f)$ of  $f$ is defined to be the convex hull of the following set 
$${\bigcup_{\mathbf{a} \in \operatorname{supp}(f)}\left(\mathbf{a}+\mathbb{R}_{\geq 0}^{n+1}\right)}.$$ 
For each face $\gamma$ of $\Gamma_{+}(f)$, we define the polynomial $f_\gamma$ as follows:
$$
f_\gamma=\sum_{\mathbf{a} \in \gamma} c_{\mathbf{a}} \mathbf{x}^{\mathbf{a}}.
$$
A power series $f$ is said to be Newton non-degenerate, if for every face $\gamma$ the equation $f_\gamma=0$ defines a hypersurface smooth in the complement of the hypersurface $x_0 \cdots x_n=0$, that is, the intersection of the  singular set of the variety $V(f_\gamma),$ defined by $f_\gamma=0,$ and the open torus orbit,  defined by $x_0 \cdots x_n\not =0,$ is empty.
\medskip

Now we have the following criterion for canonical singularities (see \cite{IP} for instance).

\begin{lem}
    Let $S\subset\mathbb{C}^{n+1}$ be a normal hypersurface defined as the set of zeros of a Newton non-degenerate polynomial $f.$  If the point $(1,1,\dots,1)$ is in the interior of the Newton polyhedron $\Gamma_{+}(f)$, then $S$ has canonical singularities.
\end{lem}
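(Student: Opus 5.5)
The plan is to pass to a toric resolution and compute discrepancies along its exceptional divisors.

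\emph{Step 1: set-up.} Since $f$ is Newton non-degenerate, a toric modification of $\mathbb{C}^{n+1}$ associated with a regular subdivision of the dual fan of $\Gamma_{+}(f)$ gives an embedded resolution of $S$ near the origin. The argument is local, and the torus-invariant strata are handled by restricting to coordinate subspaces. Choose a regular fan $\Sigma$ that refines the dual fan of $\Gamma_{+}(f)$, and let $\pi\colon X_\Sigma\to\mathbb{C}^{n+1}$ be the corresponding toric morphism. Let $S'$ be the strict transform of $S$. Non-degeneracy means that on each torus orbit $O_\tau$ of $X_\Sigma$, the restriction of $S'$ is cut out by the initial form $f_\gamma$, where $\gamma$ is the face of $\Gamma_{+}(f)$ on which the weights $\tau$ attain their minimum. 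These initial forms are smooth in the torus. Hence $S'$ is smooth and meets the toric boundary transversally, so $\pi|_{S'}$ is a log resolution of $S$.

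\emph{Step 2: discrepancies.} Take a primitive ray generator $p=(p_0,\dots,p_n)\in\mathbb{Z}_{\ge0}^{n+1}$ of $\Sigma$, and write $E_p$ for its toric divisor. By the standard toric formula,
$$K_{X_\Sigma}=\pi^*K_{\mathbb{C}^{n+1}}+\sum_p\Big(\sum_i p_i-1\Big)E_p,$$
and $\pi^*S=S'+\sum_p m(p)E_p$ with $m(p)=\min_{\mathbf a\in\Gamma_+(f)}\langle p,\mathbf a\rangle$. Adjunction, $K_S=(K+S)|_S$ and $K_{S'}=(K_{X_\Sigma}+S')|_{S'}$, then gives
$$K_{S'}=\pi^*K_S+\sum_p\big(\langle p,\mathbf 1\rangle-1-m(p)\big)E_p|_{S'}.$$
The components of $E_p|_{S'}$ are exactly the exceptional divisors of $S'\to S$, and transversality makes them reduced. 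So each exceptional divisor lying over $E_p$ has discrepancy $\langle p,\mathbf 1\rangle-m(p)-1$. Strictly speaking, only rays $p$ with $E_p\cap S'\neq\emptyset$ and $\pi(E_p)$ of codimension at least $2$ matter.

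\emph{Step 3: positivity.} Because $\mathbf 1=(1,\dots,1)$ lies in the interior of $\Gamma_{+}(f)$ and $p$ is a nonzero vector in $\mathbb{R}^{n+1}_{\ge0}$, the linear form $\langle p,\cdot\rangle$ attains its minimum on $\Gamma_{+}(f)$ only on the boundary. It cannot be constant near an interior point, since $p\ne0$. Therefore $\langle p,\mathbf 1\rangle>m(p)$, and as both sides are integers, $\langle p,\mathbf 1\rangle\ge m(p)+1$. Every discrepancy is thus $\ge0$, and $S$ is normal and Gorenstein, being a hypersurface. Since $S'$ is a log resolution, this shows $S$ is canonical.

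\emph{Main obstacle.} The delicate step is justifying that $S'$ is smooth and meets the boundary transversally also on the orbits where the initial form $f_\gamma$ involves fewer variables. Coordinate directions in which $f$ has no monomial could also produce non-exceptional or non-compact faces. I would handle this by the usual Varchenko/Oka argument: in each affine chart of $X_\Sigma$, write $\pi^*f=\prod_p y_p^{m(p)}\cdot\tilde f$, and observe that $\tilde f$ restricted to a stratum equals $f_\gamma$ up to monomial substitution. Then either non-degeneracy gives smoothness there, or $\tilde f$ is a unit there. Rays whose $E_p$ does not meet $S'$ are simply discarded.
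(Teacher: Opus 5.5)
Your proposal is correct. The paper gives no proof of this lemma; it imports the criterion from \cite{IP}. The comparison is therefore between a citation and your self-contained derivation, which is the standard argument behind that criterion. That argument has three steps: a Varchenko--Oka toric resolution of the non-degenerate hypersurface, the formula $K_{S'}=\pi_S^*K_S+\sum_p(\langle p,\mathbf 1\rangle-1-m(p))E_p|_{S'}$, and the observation that interiority of $\mathbf 1$ forces $\langle p,\mathbf 1\rangle\ge m(p)+1$ for every nonzero integral $p\ge 0$.

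Two remarks sharpen what you wrote.

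First, the transversality you flag as the main obstacle is not needed for canonicity. Once $S'$ is smooth and $\Delta:=K_{S'}-\pi_S^*K_S$ is effective, take any divisor $F$ exceptional over $S'$, realized on some $g\colon Y\to S'$. Its discrepancy is $a(F,S)=a(F,S')+\operatorname{mult}_F g^*\Delta\ge 1$. So the only input is smoothness of the strict transform. Transversality becomes essential only for the log canonical analogue, where the same computation on a genuine log resolution shows that $\mathbf 1\in\Gamma_+(f)$ implies lc.

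Second, not every component of $E_p|_{S'}$ is exceptional. For $p=e_i$ it is the strict transform of $S\cap\{z_i=0\}$, with coefficient $-m(e_i)=0$. Since your inequality holds for every ray, this imprecision is harmless; you essentially correct it yourself at the end of Step 2.

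What your route buys over the citation is transparency about hypotheses. For the statement near the origin, only compact faces enter, namely those attached to the orbits lying over $\pi^{-1}(0)$, and smoothness there propagates to a neighbourhood. A global statement instead uses non-degeneracy for all faces, including non-compact ones and $\Gamma_+(f)$ itself. That is exactly the version of the definition the paper adopts.
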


Let $X$ be a hypersurface defined as in (2.2):$$ f=h\left(z_0, z_1\right)+z_4 z_2^{a_2}+z_2 z_3^{a_3}+z_3 z_4^{a_4}=0.$$ Since this weighted variety  is non-quasismooth, one can write  $h$ as either one of the following type of binomials:
\begin{enumerate}
\item[(A)] $ h(z_{0},z_{1})=z_{0}^{a_{0}}+z_{0}^{\beta_{0}}z_{1}^{\beta_{1}}.$
\item[(B)] $h(z_{0},z_{1})=z_{0}^{\alpha_{0}}z_{1}^{\alpha_{1}}+z_{0}^{\beta_{0}}z_{1}^{\beta_{1}}.$
\end{enumerate}
\medskip

We have the following proposition for the first kind of polynomials:

\begin{prop}
    Let ${\bf w}$ be a weight vector as described in (2.1) verifying $I<w_{0}+w_{1}$ and $X$ a non-quasismooth hypersurface of $\mathbb{P}({\bf w})$ defined by the equation $$X: h(z_{0},z_{1})+z_{4}z_{2}^{a_{2}}+z_{2}z_{3}^{a_{3}}+z_{3}z_{4}^{a_{4}}=0,$$ where $h(z_{0},z_{1})=z_{0}^{a_{0}}+z_{0}^{\beta_{0}}z_{1}^{\beta_{1}}$ is a binomial formed by elements of $\Lambda_{\textbf{w}}$. If $\beta_{0}<\dfrac{d}{w_{0}+w_{1}-I}$,
    then $X$ is klt.
\end{prop}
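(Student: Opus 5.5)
The plan is to reduce the klt property of $X$ to a purely local statement at the non-quasismooth points, and then apply the Ishii–Prokhorov criterion (the Newton polyhedron lemma above) to an affine chart of the cone. The numerical hypothesis $\beta_0<\frac{d}{w_0+w_1-I}$ should be exactly the condition that puts $(1,\dots,1)$ in the interior of the relevant Newton polyhedron.

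\textbf{Step 1 (locating the bad points).} Away from the non-quasismooth locus, $X$ has only cyclic quotient singularities, which are klt. So I first compute where the cone $V_f$ is singular off the origin.

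Since the loop block $z_4z_2^{a_2}+z_2z_3^{a_3}+z_3z_4^{a_4}$ is quasismooth (Remark 2.1), its partial derivatives in $z_2,z_3,z_4$ vanish simultaneously only when $z_2=z_3=z_4=0$. The singular points therefore satisfy $z_2=z_3=z_4=0$ and $h=\partial_{z_0}h=\partial_{z_1}h=0$.

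Write $h=z_0^{\beta_0}\bigl(z_0^{a_0-\beta_0}+z_1^{\beta_1}\bigr)$, where $z_0^{a_0-\beta_0}+z_1^{\beta_1}$ is reduced as a binary form on $\mathbb{P}(w_0,w_1)$. The only singular point is then $P=(0:1:0:0:0)$, which is singular precisely when $\beta_0\ge 2$. It suffices to prove that $X$ is klt at $P$.

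\textbf{Step 2 (passing to an affine chart).} Near $P$, take the orbifold chart $z_1=1$, which presents $X$ locally as the quotient by $\mu_{w_1}$ of the affine hypersurface
$$S:\ z_0^{a_0}+z_0^{\beta_0}+z_4z_2^{a_2}+z_2z_3^{a_3}+z_3z_4^{a_4}=0\subset\mathbb{C}^4 .$$
By well-formedness the quotient map is étale in codimension one. Hence, by the remark on finite covers (Koll\'ar, Corollary 2.43), it suffices to show that $S$ is klt at the origin.

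Near the origin, $z_0^{a_0}+z_0^{\beta_0}=z_0^{\beta_0}\bigl(1+z_0^{a_0-\beta_0}\bigr)$. The analytic coordinate change $z_0'=z_0\bigl(1+z_0^{a_0-\beta_0}\bigr)^{1/\beta_0}$ turns this into $z_0'^{\beta_0}$, so we may work with $g=z_0^{\beta_0}+z_4z_2^{a_2}+z_2z_3^{a_3}+z_3z_4^{a_4}$.

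Two properties of $S$ are then needed:
\begin{itemize}
\item $S$ is normal: it is a hypersurface, hence Cohen–Macaulay, and its singular locus has codimension at least $2$, so Serre's criterion applies.
\item $g$ is Newton non-degenerate: $g$ is a Thom–Sebastiani sum of the Fermat monomial $z_0^{\beta_0}$ and a non-degenerate loop polynomial. Each face polynomial $g_\gamma$ is therefore a sum of a sub-collection of these monomials, and one checks it has no singular points in the torus.
\end{itemize}

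\textbf{Step 3 (the interior point condition).} I expect this to be the key computation. The compact facet of $\Gamma_+(g)$ containing $\beta_0e_0$ and the three loop exponents is cut out by a linear form of value $d$. By the preceding lemma (with the $\tilde w_i$), that linear form is
$$\ell=\Bigl(\tfrac{d}{\beta_0},\tilde w_2,\tilde w_3,\tilde w_4\Bigr).$$
The point $(1,1,1,1)$ lies strictly on the correct side of this facet iff
$$\frac{d}{\beta_0}+\tilde w_2+\tilde w_3+\tilde w_4>d.$$
Using $\tilde w_2+\tilde w_3+\tilde w_4=w_2+w_3+w_4$ and $I=\sum w_i-d$, this becomes
$$\frac{d}{\beta_0}>w_0+w_1-I,$$
which is exactly the hypothesis; $I<w_0+w_1$ makes the bound meaningful.

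The remaining supporting hyperplanes of $\Gamma_+(g)$ are coordinate-type ones, coming from the unbounded directions. For these, interiority of $(1,1,1,1)$ follows because each variable appears in some monomial with exponent $\ge 1$, plus the shape of the loop exponents. I expect checking that no other facet obstructs to be the main fiddly point. I would handle it by listing the facets of this simplex-plus-orthant, noting the polyhedron has a unique compact facet since the four exponent vectors are affinely independent.

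\textbf{Conclusion.} The Ishii–Prokhorov lemma then gives that $S$ has canonical singularities, hence klt. Therefore $X$ is klt at $P$, and so $X$ is klt everywhere.
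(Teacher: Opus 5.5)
Your route is the paper's. You isolate the single non-quasismooth point $[0:1:0:0:0]$, pass to the chart $z_1=1$ and its $\mathbb{Z}_{w_1}$-cover $S\subset\mathbb{C}^4$, and obtain canonicity of $S$ from the Ishii--Prokhorov criterion. Your Step 1, which splits the gradient using the Thom--Sebastiani structure, is cleaner than the paper's Bertini/base-locus argument. The analytic change of $z_0$ in Step 2 is harmless but unnecessary: since $(a_0,0,0,0)\in(\beta_0,0,0,0)+\mathbb{R}^4_{\geq 0}$, the term $z_0^{a_0}$ does not change $\Gamma_{+}$.

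Step 3 needs repair on two counts.

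\textbf{The normal vector is wrong.} The form $\ell=(d/\beta_0,\tilde w_2,\tilde w_3,\tilde w_4)$ is not the normal of the compact facet. In coordinates $(z_0,z_2,z_3,z_4)$, on the exponent $(0,a_2,0,1)$ of $z_4z_2^{a_2}$ it gives $a_2\tilde w_2+\tilde w_4$. Lemma 3.1 instead says $a_2\tilde w_2+\tilde w_3=d$: the $\tilde w_i$ solve the transposed system. For instance, $(a_2,a_3,a_4)=(4,4,2)$ with $m_2=37$ gives $a_2\tilde w_2+\tilde w_4=1517\neq 1221=d$. The correct normal is the weight vector $(d/\beta_0,w_2,w_3,w_4)$ itself, for which $g$ is quasi-homogeneous of degree $d$. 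Your inequality survives only because $\sum\tilde w_i=\sum w_i$. With the right normal it follows at once from $d-w_2-w_3-w_4=w_0+w_1-I$, without Lemma 3.1.

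\textbf{The non-compact facets are not checked.} They are not all coordinate hyperplanes. For example, $\ell=(1/\beta_0,0,1/a_3,1)$ supports a facet through $(\beta_0,0,0,0)$, $(0,a_2,0,1)$, $(0,1,a_3,0)$ and the direction $e_2$. So ``each variable appears'' does not settle them. They are harmless, though:
\begin{itemize}
\item if $\ell\geq 0$ has $\ell_0=0$, the vertex $(\beta_0,0,0,0)$ has value $0<\ell\cdot\mathbf{1}$;
\item if $\ell_0>0$ but $\ell_2=0$ (resp.\ $\ell_3=0$, $\ell_4=0$), the vertex $(0,a_2,0,1)$ (resp.\ $(0,1,a_3,0)$, $(0,0,1,a_4)$) has value strictly less than $\ell\cdot\mathbf{1}$.
\end{itemize}

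\textbf{How the paper avoids this.} The paper sidesteps facet enumeration by working dually, and this is where the $\tilde w_i$ properly belong. Let $a^{(j)}$ denote the four exponent vectors of $g$. Lemma 3.1 says exactly that $\mathbf{1}=\sum_j\mu_j a^{(j)}$ with $\mu=(1/\beta_0,\tilde w_2/d,\tilde w_3/d,\tilde w_4/d)$ and all $\mu_j>0$. Your hypothesis $\beta_0<d/(w_0+w_1-I)$ is equivalent to $\sum_j\mu_j>1$. Hence $\mathbf{1}/\sum_j\mu_j$ lies in $\operatorname{conv}(\operatorname{supp} g)\subset\Gamma_{+}(g)$ and has all coordinates $<1$, so $\mathbf{1}$ is interior. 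The paper's $P_0$, with $\lambda_i=(\tilde w_i-\delta)/d$, is a variant of this point.

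With either fix, your argument is complete.
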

\begin{proof}
Since  $X$ is non-quasi-smooth,  $\beta_{0}>1.$  Also, as the number of monomials of the polynomial under study equals the number of variables, any linear combination of these monomials with non-zero coefficients defines a hypersurface isomorphic to $X.$  So we consider $X$ a general divisor in the linear system defined by the monomials of the defining equation. The base locus of this linear system is contained in the set of points
\begin{align*}
  B=\bigl\{ & [0:0:1:0:0], [0:0:0:1:0], [0:0:0:0:1], [0:1:1:0:0],[0:1:0:1:0],\\ & \hspace{4cm} [0:1:0:0:1]
      , [0:1:0:0:0]  \bigr\}.
\end{align*}
By  Bertini's theorem on $\mathbb{C}^{5}-\left\{0\right\}$, it follows that  $X$ is quasi-smooth outside of all these points. 
        Since the gradient of the polynomial $f$ defining $X$ is given by 
        {\small{
      \begin{itemize}
      \item   $\nabla f=\left(a_{0}z_{0}^{a_{0}-1}+\beta_{0}z_{0}^{\beta_{0}-1}z_{1}^{\beta_{1}},\beta_{1}z_{0}^{\beta_{0}}z_{1}^{\beta_{1}-1}, a_{2}z_{4}z_{2}^{a_{2}-1}+z_{3}^{a_{3}}, a_{3}z_{2}z_{3}^{a_{3}-1}+z_{4}^{a_{4}}, z_{2}^{a_{2}}+a_{4}z_{3}z_{4}^{a_{4}-1}\right),$ when $\beta_{1}>1.$
        \item $\nabla f=\left(a_{0}z_{0}^{a_{0}-1}+\beta_{0}z_{0}^{\beta_{0}-1}z_{1},z_{0}^{\beta_{0}}, a_{2}z_{4}z_{2}^{a_{2}-1}+z_{3}^{a_{3}}, a_{3}z_{2}z_{3}^{a_{3}-1}+z_{4}^{a_{4}}, z_{2}^{a_{2}}+a_{4}z_{3}z_{4}^{a_{4}-1}\right),$  when $\beta_{1}=1.$
\end{itemize}}} 
\noindent It follows that $X$ is quasismooth at all elements of $B$ except at the point $[0:1:0:0:0]$. Next, we will show that $X$ is klt in $[0:1:0:0:0]$. For this, we consider the hypersurface $\tilde{X}\subset\mathbb{P}(\textbf{w})$ defined by the general linear combination
        $$\tilde{X}: c_{0}z_{0}^{a_{0}}+c_{1}z_{0}^{\beta_{0}}z_{1}^{\beta_{1}}+c_{2}z_{4}z_{2}^{a_{2}}+c_{3}z_{2}z_{3}^{a_{3}}+c_{4}z_{3}z_{4}^{a_{4}}=0$$
        where $c_{i}$'s are complex numbers. As mentioned above, since the number of variables $z_{i}$'s is equal to the number of monomials, the varieties $X$ and $\tilde{X}$ are isomorphic. We will show that  $\tilde{X}$ is klt at the point  $[0:1:0:0:0]$.  In the affine chart $z_1 \neq 0$ we take $z_1=1$ and locally, around the singularity, $\tilde{X}$ is the quotient of the hypersurface      
$$\tilde{S}: c_{0}z_{0}^{a_{0}}+c_{1}z_{0}^{\beta_{0}}+c_{2}z_{4}z_{2}^{a_{2}}+c_{3}z_{2}z_{3}^{a_{3}}+c_{4}z_{3}z_{4}^{a_{4}}=0$$
in $\mathbb{C}^4$ by the group $\mathbb{Z}_{w_1}$. Since klt is a property preserved by finite quotients 
(see Corollary 2.43  in \cite{Ko3}), it suffices to show that such the general hypersurface $\tilde{S} \subset \mathbb{C}^4$ has canonical singularities. Clearly, $\tilde{S}$ is normal:  this hypersurface has a singular set of codimension at least 2. 
Also, since the coefficients of the monomials in this example are taken to be general, the Newton non-degeneracy is satisfied since the base locus of any collection of the monomials in the equation defining $\tilde{S}$  is contained in the hypersurface $x_0 \cdots x_n=0$.         
Thus according to Lemma 3.2,  the singularity in $\tilde{S}$ is canonical if the point $(1,1,1,1)$ is in the interior of the Newton polyhedron $\Gamma_{+}(g)$ generated  by the support of the polynomial $g$ defining $\tilde{S}$, that is, generated by the set 
        $$\operatorname{supp}(g)=\bigl\{ (a_{0},0,0,0), (\beta_{0},0,0,0),(0,a_{2},0,1), (0,1,a_{3},0), (0,0,1,a_{4})\bigr\}.$$
        We notice that it is enough to show that there exists some point in the Newton polyhedron  with all its entries less than 1. For this, we consider the following point
        \begin{equation*}
            P_{0}=\lambda_{1}(\beta_{0},0,0,0) + \lambda_{2}(0,a_{2},0,1)  + \lambda_{3}(0,1,a_{3},0)+\lambda_{4}(0,0,1,a_{4}),
        \end{equation*}        
        where the numbers $\lambda_{i}$'s are given as follows: as $\beta_{0}<\frac{d}{w_{0}+w_{1}-I}$, we can choose a  $q\in\mathbb{Q}$ such that $\beta_{0}=\frac{d}{w_{0}+w_{1}-q}$. Then there exists $\epsilon>0$ such that $\epsilon\in ]\max\{q,I-1\},I[$. We define 
        $$\lambda_{1}=\dfrac{w_{0}+w_{1}-\epsilon}{d}\ 
  \ \ \ \mbox{ and }\ \ \ \lambda_{i}=\frac{\tilde{w}_{i}-\delta}{d}, $$
        for $i=2,3,4$, where the $\tilde{w}_{i}$'s are defined in Lemma 3.1 and $\delta=\frac{I-\epsilon}{3}$. Clearly, each $\lambda_{i}$ is less than $1$. 
        Again by Lemma 3.1, we know that $\tilde{w}_{2}+\tilde{w}_{3}+\tilde{w}_{4}=w_{2}+w_{3}+w_{4}$. Then, we obtain
        $$\sum\lambda_{i}= \left( \dfrac{w_{0}+w_{1}-\epsilon}{d}\right) +\dfrac{\tilde{w}_{2}+\tilde{w}_{3}+\tilde{w}_{4}-3\delta}{d}=\dfrac{w_{0}+w_{1}+w_{2}+w_{3}+w_{4}-I}{d}=1$$
        Moreover, in the first coordinate we have  
        $$\lambda_{1}\beta_{0}=\left(\dfrac{w_{0}+w_{1}-\epsilon}{d}\right)\left(\dfrac{d}{w_{0}+w_{1}-q}\right)=\dfrac{w_{0}+w_{1}-\epsilon}{w_{0}+w_{1}-q}<1.$$
        Also, from Lemma 3.1, in the second coordinate we have:  
        $$\lambda_{2}a_{2}+\lambda_{3}=\dfrac{a_{2}\tilde{w}_{2}-\delta a_{2}+\tilde{w}_{3}-\delta}{d}=\dfrac{d-\delta(a_{2}+1)}{d}<1.$$
        Analogously, we can verify that the numbers $\lambda_{3}a_{3}+\lambda_{4}$ and $\lambda_{4}a_{4}+\lambda_{2}$ are less than $1$ in the remaining coordinates. 
        Hence the point $P_{0}$ is in $\Gamma_{+}(g)$. This implies that $\tilde{X}$ is klt in the point $[0:1:0:0:0]$. Hence, we have that  $X$ is klt in $[0:1:0:0:0]$. 
        
        \end{proof}
\medskip

There is a  similar result for the polynomials of the second kind, where  the binomial $h(z_{0},z_{1})$ is of the form (B). The proof of the next proposition  follows similar arguments to the ones used in Proposition 3.1 but we would rather include it for thoroughness.  
    \begin{prop}
        Let ${\bf w}$ be a weight vector as described  in (2.1) verifying $I<w_{1}+w_{0}$ and $X$ a non-quasi-smooth hypersurface of $\mathbb{P}({\bf w})$ defined by equation $$X: h(z_{0},z_{1})+z_{4}z_{2}^{a_{2}}+z_{2}z_{3}^{a_{3}}+z_{3}z_{4}^{a_{4}}=0,$$ where $h(z_{0},z_{1})=z_{0}^{{\alpha}_{0}}z_{1}^{\alpha_{1}}+z_{0}^{\beta_{0}}z_{1}^{\beta_{1}}$ is a binomial formed by elements of $\Lambda_{\textbf{w}}$ with $\alpha_{0}>\beta_{0}\geq 1$. If $\alpha_{1}<\dfrac{d}{w_{0}+w_{1}-I}$ and $\beta_{0}<\dfrac{d}{w_{0}+w_{1}-I}$,
    then $X$ is klt.
    \end{prop}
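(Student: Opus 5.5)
I will follow the structure of the proof of Proposition 3.1. First, since the defining polynomial has exactly five monomials in five variables, any choice of non-zero coefficients gives a hypersurface isomorphic to $X$. I may therefore regard $X$ as a general member $\tilde X$ of the linear system spanned by
\[
z_0^{\alpha_0}z_1^{\alpha_1},\quad z_0^{\beta_0}z_1^{\beta_1},\quad z_4z_2^{a_2},\quad z_2z_3^{a_3},\quad z_3z_4^{a_4}.
\]
The base locus of this system lies in the union of the coordinate strata where $z_0z_1=0$ and at most one of $z_2,z_3,z_4$ is non-zero. By Bertini on $\mathbb{C}^5-\{0\}$, $X$ is quasismooth away from this base locus. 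I will then inspect $\nabla f$ at the base points.

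The cycle block is quasismooth, so any point with $(z_2,z_3,z_4)\neq 0$ is fine. Indeed, the last three partials $a_2z_4z_2^{a_2-1}+z_3^{a_3}$, $a_3z_2z_3^{a_3-1}+z_4^{a_4}$, $z_2^{a_2}+a_4z_3z_4^{a_4-1}$ cannot vanish simultaneously there. The remaining candidates are the two vertices $[1:0:0:0:0]$ and $[0:1:0:0:0]$. Since $\alpha_0>\beta_0\geq1$ and $X$ is non-quasismooth, both monomials involve both variables, apart from degenerate cases that I would check by hand: for instance $\beta_0=1$ with $\beta_1$ making $z_0z_1^{\beta_1}$ a quasismooth-type monomial at $[0:1:0:0:0]$. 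So both vertices may be genuinely singular, and I treat each by a local computation.

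At $P_1=[0:1:0:0:0]$, set $z_1=1$. Locally $\tilde X$ is the $\mathbb{Z}_{w_1}$-quotient of
\[
\tilde S:\ c_0z_0^{\alpha_0}+c_1z_0^{\beta_0}+c_2z_4z_2^{a_2}+c_3z_2z_3^{a_3}+c_4z_3z_4^{a_4}=0 .
\]
The leading term in $z_0$ is $z_0^{\beta_0}$, because $\beta_0<\alpha_0$. This is exactly the situation of Proposition 3.1 with $\beta_0<d/(w_0+w_1-I)$, so the same point $P_0$ works. That is, I take $\lambda_1=(w_0+w_1-\epsilon)/d$ on $(\beta_0,0,0,0)$ and $\lambda_i=(\tilde w_i-\delta)/d$ on the cycle exponents, with $\delta=(I-\epsilon)/3$. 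These are the numbers of Lemma 3.1, and they give a point of $\Gamma_+$ with all coordinates $<1$. Newton non-degeneracy again follows from the genericity of the coefficients. Normality holds because the singular set has codimension $\geq2$. Then Lemma 3.2 gives canonical singularities for $\tilde S$, hence klt for the quotient.

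At $P_0=[1:0:0:0:0]$, set $z_0=1$ and take the $\mathbb{Z}_{w_0}$-quotient of
\[
c_0z_1^{\alpha_1}+c_1z_1^{\beta_1}+(\text{cycle})=0 .
\]
Since $\alpha_1<\beta_1$, the relevant vertex is $(\alpha_1,0,0,0)$. I repeat the construction with $\beta_0$ replaced by $\alpha_1$, using the hypothesis $\alpha_1<d/(w_0+w_1-I)$. One writes $\alpha_1=d/(w_0+w_1-q)$ and picks $\epsilon\in\left]\max\{q,I-1\},I\right[$. This interval is nonempty because $q<I$, and $I-1<I$; the positivity of $w_0+w_1-\epsilon$ uses $I<w_0+w_1$. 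With these choices, $\lambda_1\alpha_1<1$ and the cycle coordinates equal $(d-\delta(a_i+1))/d<1$.

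The main obstacle I expect is not the Newton polyhedron computation, which is identical to Proposition 3.1, but the bookkeeping of the base locus and of exactly which of the two vertices are non-quasismooth. In particular, I need to justify that the exponent inequality $\alpha_1<\beta_1$ follows from $\alpha_0>\beta_0$ and equal degree. I also need to handle the edge cases where one of the exponents equals $1$, which make that vertex quasismooth, so that no condition is needed there.
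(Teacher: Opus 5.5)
Your proposal is correct and follows essentially the same route as the paper: pass to a general member via torus rescaling, use Bertini and the quasismoothness of the cycle block to reduce to the vertices $[1:0:0:0:0]$ and $[0:1:0:0:0]$, and at each one apply the Ishii--Prokhorov criterion (Lemma 3.2) with the same point $P_0$ built from Lemma 3.1, using the vertex $(\beta_0,0,0,0)$, resp.\ $(\alpha_1,0,0,0)$. The only difference is organizational: the paper splits into the cases $\alpha_1=1$ and $\alpha_1>1$ (reducing the subcase $\beta_0=1$ to the first by symmetry), whereas you treat both vertices uniformly, which is fine since $[0:1:0:0:0]$ (resp.\ $[1:0:0:0:0]$) is quasismooth exactly when $\beta_0=1$ (resp.\ $\alpha_1=1$) and $\beta_1>\alpha_1$ follows at once from $(\beta_1-\alpha_1)w_1=(\alpha_0-\beta_0)w_0>0$.
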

    \begin{proof}
We split the proof in two parts:
\medskip

\noindent $i)$ First, we discuss the case  $\alpha_{1}=1$. In this case, the hypersurface $X$ is given  by
        $$X: f=z_{0}^{\alpha_{0}}z_{1}+z_{0}^{\beta_{0}}z_{1}^{\beta_{1}}+z_{4}z_{2}^{a_{2}}+z_{2}z_{3}^{a_{3}}+z_{3}z_{4}^{a_{4}}=0.$$
        Since $X$ is non-quasi-smooth, we have $\beta_{1}>1$. 
         Also, as the number of monomials of $f$ equals the number of variables, any linear combination of these monomials with nonzero coefficients defines a hypersurface isomorphic to $X$. So we consider $X$ a general divisor in the linear system defined by the monomials of the defining equation. The base locus of this linear system is contained in the set of points
         {\small{
                \begin{align*}
            B=\bigl\{ & [0:0:1:0:0], [0:0:0:1:0], [0:0:0:0:1], [0:1:1:0:0],[0:1:0:1:0], [0:1:0:0:1]\\ & \hspace{2cm} [1:0:1:0:0], [1:0:0:1:0], [1:0:0:0:1], [1:0:0:0:0], [0:1:0:0:0] \bigr\}
        \end{align*}}}
        By Bertini's theorem on $\mathbb{C}^{5}-\{0\}$, we have that $X$ is quasi-smooth outside of all these points.  
        As $\beta_{1}>1$,  the gradient of $f$:
       {\small{$$\nabla f =\left(\alpha_{0}z_{0}^{\alpha_{0}-1}z_{1}+\beta_{0}z_{0}^{\beta_{0}-1}z_{1}^{\beta_{1}},z_{0}^{\alpha_{0}}+\beta_{1}z_{0}^{\beta_{0}}z_{1}^{\beta_{1}-1}, a_{2}z_{4}z_{2}^{a_{2}-1}+z_{3}^{a_{3}}, a_{3}z_{2}z_{3}^{a_{3}-1}+z_{4}^{a_{4}}, z_{2}^{a_{2}}+a_{4}z_{3}z_{4}^{a_{4}-1}\right).$$}}
 It follows that $X$ is quasi-smooth in all elements of $B$ except at the point $[0:1:0:0:0]$. Next, we  show that $X$ is klt in $[0:1:0:0:0]$.  In order to achieve this, we consider the hypersurface $\tilde{X} \subset \mathbb{P}(\mathbf{w})$ defined by the general linear combination 
        $$\tilde{X}:c_{0}z_{0}^{\alpha_{0}}z_{1}+c_{1}z_{0}^{\beta_{0}}z_{1}^{\beta_{1}}+c_{2}z_{4}z_{2}^{a_{2}}+c_{3}z_{2}z_{3}^{a_{3}}+c_{4}z_{3}z_{4}^{a_{4}}=0, $$
        where $c_{i}$'s are complex numbers. Since the number of variables $z_{i}$'s coincides with the number of monomials in $\tilde{X}$, the subvarieties $X$ and $\tilde{X}$ are isomorphic. Thus, it is enough to verify that $\tilde{X}$ is klt at the point $[0:1:0:0:0]$.   In the affine chart $z_1 \neq 0$ we take $z_1=1$ and locally, around the singularity, $\tilde{X}$ is the quotient of the hypersurface      
$$\tilde{S}:c_{0}z_{0}^{\alpha_{0}}+c_{1}z_{0}^{\beta_{0}}+c_{2}z_{4}z_{2}^{a_{2}}+c_{3}z_{2}z_{3}^{a_{3}}+c_{4}z_{3}z_{4}^{a_{4}}=0$$ in $\mathbb C^4$ by the group $\mathbb{Z}_{w_{1}}.$ As before,  it is enough  to show that such the general hypersurface $\tilde{S} \subset \mathbb{C}^4$ has canonical singularities. Clearly, $\tilde{S}$ is normal since  this hypersurface has  a singular set of codimension at least 2. 
Also, since the coefficients of the monomials in this example are taken to be general, the Newton non-degeneracy is satisfied since the base locus of any collection of the monomials in the equation  is contained in the hypersurface $x_0 \cdots x_n=0$.         
According to Lemma 3.2,  the singularity in $\tilde{S}$ is canonical if the point $(1,1,1,1)$ is in the interior of the Newton polyhedron $\Gamma_{+}(g)$ generated  by support of the polynomial $g$ defining $\tilde{S}$, that is, generated by the set 
 $$\operatorname{Supp}(g)=\bigl\{ (\alpha_{0},0,0,0), (\beta_{0},0,0,0),(0,a_{2},0,1), (0,1,a_{3},0), (0,0,1,a_{4})\bigr\}.$$
 Again, it suffices  to show that the Newton polyhedron contains some point $P_{0}$ with all its entries less than 1.
       Let us consider the point $P_{0}$:
        \begin{equation*}
            P_{0}=\lambda_{1}(\beta_{0},0,0,0) + \lambda_{2}(0,a_{2},0,1)  + \lambda_{3}(0,1,a_{3},0)+\lambda_{4}(0,0,1,a_{4})
        \end{equation*} 
        where the values $\lambda_{i}$'s are chosen to be the same as in Proposition 3.1. Since $\beta_{0}<\frac{d}{w_{0}+w_{1}-I}$, the same argument as the ones given before leads to the conclusion that $X$ is klt at $[0:1:0:0:0]$.
\medskip

\noindent $ii)$   Now, we will study the case when $\alpha_{1}>1$. In this case the hypersurface $X$ is defined by $$X: f=z_{0}^{\alpha_{0}}z_{1}^{\alpha_{1}}+z_{0}^{\beta_{0}}z_{1}^{\beta_{1}}+z_{4}z_{2}^{a_{2}}+z_{2}z_{3}^{a_{3}}+z_{3}z_{4}^{a_{4}}=0.$$
        If $\beta_{0}=1$, we arrive at the case described in $i)$, so  we assume $\beta_{0}>1$. 
As before we consider  $X$ a general divisor in the linear system defined by the monomials of the defining equation. The base locus of this linear system is contained in the set of points
{\small{
\begin{align*}
            B=\bigl\{ & [0:0:1:0:0], [0:0:0:1:0], [0:0:0:0:1], [0:1:1:0:0],[0:1:0:1:0], [0:1:0:0:1]\\ & \hspace{2cm} [1:0:1:0:0], [1:0:0:1:0], [1:0:0:0:1], [1:0:0:0:0], [0:1:0:0:0] \bigr\}.
\end{align*}
}}
By Bertini's theorem on $\mathbb{C}^{5}-\{0\}$, we have that $X$ is quasi-smooth outside of all points in $B$. From equating the gradient of $f$
        {\small{$$\left(\alpha_{0}z_{0}^{\alpha_{0}-1}z_{1}^{\alpha_{1}}+\beta_{0}z_{0}^{\beta_{0}-1}z_{1}^{\beta_{1}},\alpha_{1}z_{0}^{\alpha_{0}}z_{1}^{\alpha_{1}-1}+\beta_{1}z_{0}^{\beta_{0}}z_{1}^{\beta_{1}-1}, a_{2}z_{4}z_{2}^{a_{2}-1}+z_{3}^{a_{3}}, a_{3}z_{2}z_{3}^{a_{3}-1}+z_{4}^{a_{4}}, z_{2}^{a_{2}}+a_{4}z_{3}z_{4}^{a_{4}-1}\right)$$}}  to  zero, we conclude that $X$ is quasismooth at all elements of $B$ except  at the points 
        $[0:1:0:0:0]$ and $[1:0:0:0:0]$. 
        Next, we will show that $X$ is klt these two points. For this,  consider  the hypersurface $\tilde{X}\subset\mathbb{P}(\textbf{w})$, given  by the equation:
        $$\tilde{X}:c_{0}z_{0}^{\alpha_{0}}z_{1}^{\alpha_{1}}+c_{1}z_{0}^{\beta_{0}}z_{1}^{\beta_{1}}+c_{2}z_{4}z_{2}^{a_{2}}+c_{3}z_{2}z_{3}^{a_{3}}+c_{4}z_{3}z_{4}^{a_{4}}=0, $$
        where $c_{i}$'s are complex numbers. As before, the subvarieties $X$ and $\tilde{X}$ are isomorphic, so  we will verify that  $\tilde{X}$ is klt at the points $[0:1:0:0:0]$ and $[1:0:0:0:0].$ 
        
For  the point $[0:1:0:0:0], $  in the affine chart $z_1 \neq 0$ we take $z_1=1$ and locally, around the singularity, $\tilde{X}$ is the quotient of the hypersurface    
$$\tilde{S}:c_{0}z_{0}^{\alpha_{0}}+c_{1}z_{0}^{\beta_{0}}+c_{2}z_{4}z_{2}^{a_{2}}+c_{3}z_{2}z_{3}^{a_{3}}+c_{4}z_{3}z_{4}^{a_{4}}=0$$  in $\mathbb C^4$ by the group $\mathbb{Z}_{w_{1}}.$ The normality of the hypersurface and the Newton non-degeneracy of the polynomial describing the hypersurface  are established using similar arguments as the ones used before.  Thus, it is enough  to show that such the general hypersurface $\tilde{S} \subset \mathbb{C}^4$ has canonical singularities.
 By  Lemma 3.2, the singularity in $S$ is canonical if the point $(1,1,1,1)$ is in the interior of the Newton polyhedron $\Gamma_{+}(g)$ generated by support of the polynomial $g$ defining $\tilde{S}$, that is, generated by the set $$\Gamma_{+}(g)=\bigl\{ (\alpha_{0},0,0,0), (\beta_{0},0,0,0),(0,a_{2},0,1), (0,1,a_{3},0), (0,0,1,a_{4})\bigr\}.$$
As before,  we show that there exists some point $P_{0}$ with all its entries less than $1$ lying on $\Gamma_{+}(g)$. Consider the point $P_{0}$:
        \begin{equation}
            P_{0}=\lambda_{1}(\beta_{0},0,0,0) + \lambda_{2}(0,a_{2},0,1)  + \lambda_{3}(0,1,a_{3},0)+\lambda_{4}(0,0,1,a_{4})
        \end{equation} 
        where the values $\lambda_{i}$'s are choose as in Proposition 3.1. Again, as $\beta_{0}<\frac{d}{w_{0}+w_{1}-I}$, by a similar way as above, we conclude $X$ is klt in the point $[0:1:0:0:0]$. For  the point $[1:0:0:0:0]$ one uses identical steps as the ones presented above.    

    \end{proof}


\section{Exceptionality of  klt Fano 3-folds}


In the previous section, we have shown the existence of families of non-quasi-smooth hypersurfaces whose singularities are klt. More precisely, these hypersurfaces $X\subset \mathbb{P}(\textbf{w})$ are described by polynomials of the following forms:
\begin{align*}
    \mbox{ Type A} :&  f=z_{0}^{a_{0}}+z_{0}^{\beta_{0}}z_{1}^{\beta_{1}}+z_{4}z_{2}^{a_{2}}+z_{2}z_{3}^{a_{3}}+z_{3}z_{4}^{a_{4}}=0, \ \mbox{ where }\beta_{0}\geq 2, \\
    \mbox{ Type B} : &  f=z_{0}^{\alpha_{0}}z_{1}^{\alpha_{1}}+z_{0}^{\beta_{0}}z_{1}^{\beta_{1}}+z_{4}z_{2}^{a_{2}}+z_{2}z_{3}^{a_{3}}+z_{3}z_{4}^{a_{4}}=0, \ \mbox{ where }\beta_{0}<\alpha_{0},
\end{align*}
 where the weight vector ${\bf w}$ has the form
 \begin{equation}
     {\bf w}=(w_{0},w_{1},w_{2},w_{3},w_{4})=(m_{3}v_{0},m_{3}v_{1},m_{2}v_{2},m_{2}v_{3},m_{2}v_{4})
 \end{equation}
 and conditions (2.1) are satisfied. It is not difficult to show that weight vectors as above determine quasismooth hypersurfaces in $\mathbb{P}(\textbf{w})$ which consist of the zero set of polynomials of type BP-cycle, chain-cycle or cycle-cycle. Moreover, from Theorem 2.1,   for  weights verifying  the inequality $$Id < \dfrac{4}{3}\min_{i,j}\{w_{i}w_{j}\}$$ the corresponding  quasismooth hypersurfaces admit  Kähler-Einstein metrics.  Naturally, one asks whether  it is possible that the associated non-quasi-smooth hypersurface $X$ as above, which can be considered as an element in the boundary of the  closure of the moduli of quasismooth hypersurfaces, admits a Kähler-Einstein singular  metric. In fact, we will prove that some members of these families of klt Fano varieties are exceptional and hence they admit Kähler-Einstein singular  metrics. 

From Lemma 2.3,  it is enough to show that $\mbox{glct}(X)>1$, thus it suffices to show  that there exists some $\lambda>1$ such that the pair $(X,\lambda D)$ is lc for any $D\in |-K_{X}|$. We will use the multiplicity of $X$ at each point $p$ to show this.  
\medskip

For smooth points, we have the following result given in  \cite{Ko2}, Claim 2.10.4:
\begin{lem}
    Let $X$ be a regular scheme and $D$ an effective $\mathbb{Q}$-divisor. Then the pair $(X,D)$ is canonical if $mult_{p} D\leq 1$ for every point $p\in X$.
\end{lem}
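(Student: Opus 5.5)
The plan is to reduce the statement to a single log resolution and a discrepancy count, in the spirit of the definitions of Section 2.3. Take $\sigma\colon Y\to X$ to be the blow-up of $X$ at a point $p$, with exceptional divisor $E\cong\mathbb{P}^{n-1}$, where $n=\dim X$. Since $X$ is regular, we have $K_Y=\sigma^*K_X+(n-1)E$ and $\operatorname{ord}_E\sigma^*D=\operatorname{mult}_pD\leq 1$. This gives
$$a_E(X,D)=n-\operatorname{mult}_pD\geq n-1\geq 1$$
when $n\geq 2$. However, canonicity needs a bound on \emph{all} exceptional divisors, not just the first blow-up. So the real argument should use the standard inductive (or valuative) estimate rather than a single blow-up.

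The first step is to reduce to divisorial valuations. By the definition of $\operatorname{mld}$, it suffices to show that for every prime divisor $E$ over $X$ with centre $Z=c_X(E)$ of codimension $c\geq 2$ we have $a_E(X,D)\geq 2$. Equivalently, $\operatorname{ord}_E(K_{Y/X})\geq 1+\operatorname{ord}_E(\sigma^*D)$; note that with the paper's convention the log discrepancy $a_E\geq 2$ is exactly discrepancy $\geq 1$, which is what ``canonical'' requires.

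The second step is the key inequality for a divisorial valuation $v=\operatorname{ord}_E$ on a regular scheme, centred at the generic point $\eta$ of $Z$. Localizing at $\eta$, we get a regular local ring $(\mathcal{O}_{X,\eta},\mathfrak m)$ of dimension $c$. Two standard facts are needed:
\begin{itemize}
\item $v(\phi)\leq \operatorname{mult}_\eta(\phi)\, v(\mathfrak m)$ for any local equation $\phi$, and then by linearity for $D$. Writing $D=\sum d_i D_i$ and using $\operatorname{mult}_\eta D\leq\operatorname{mult}_p D$ for a closed point $p\in Z$ (upper semicontinuity), this gives $v(D)\leq v(\mathfrak m)$.
\item $\operatorname{ord}_E(K_{Y/X})+1\geq c\, v(\mathfrak m)$, i.e.\ $a_E(X,0)\geq c\,v(\mathfrak m)$. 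This is the statement that the minimal log discrepancy of the regular pair $(X,0)$ is computed by monomial valuations and bounded below by $c\cdot v(\mathfrak m)$. I would prove it by induction on the number of blow-ups needed to realize $E$, using Zariski's theorem that $E$ is reached by successive blow-ups of centres. At each step, compare the new exceptional divisor with the pullback of $\mathfrak m$ and use $K_{Y'/Y}=(\operatorname{codim}-1)E'$.
\end{itemize}
Combining the two facts gives
$$a_E(X,D)=a_E(X,0)-v(D)\geq c\,v(\mathfrak m)-v(\mathfrak m)=(c-1)v(\mathfrak m)\geq 1,$$
which only yields ``log discrepancy $\geq 1$''. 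To reach $a_E\geq 2$ I must be more careful.

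The third step handles the two cases separately:
\begin{itemize}
\item If $c\geq 3$, then $(c-1)v(\mathfrak m)\geq 2$ directly.
\item If $c=2$, cut down to a general surface through $Z$ and reduce to a regular surface germ with $\operatorname{mult}D\leq1$. There, the chain of point blow-ups can be analysed explicitly. The first exceptional curve $E_1$ has $a=2-\operatorname{mult}D\geq 1$; the subsequent ones satisfy $a_{E_{k+1}}=a_{E_k}+a_{E_j}-\operatorname{mult}(\text{strict transform})$, where the strict transforms have multiplicity $\leq\operatorname{mult}D\leq 1$. An induction then shows all exceptional log discrepancies are $\geq 1$, giving discrepancy $\geq 0$.
\end{itemize}

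This suggests the intended conclusion in Kollár's Claim 2.10.4 is really ``canonical'' meaning discrepancy $\geq 0$, which differs from the paper's convention, where canonical is stated as $\operatorname{mld}\geq 1$. I expect this bookkeeping to be the main obstacle: matching the convention and making the codimension-$2$ surface induction rigorous. I would first verify the surface computation via the multiplicity-decrease under point blow-ups, then deduce the general case by taking general hyperplane sections through $\eta$.
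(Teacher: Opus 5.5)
There is a genuine gap: the first bullet of your second step is false, because its inequality points the wrong way. Since $\phi\in\mathfrak m^{\operatorname{mult}_\eta\phi}$, what always holds is $v(\phi)\geq\operatorname{mult}_\eta(\phi)\,v(\mathfrak m)$, and the bound $v(D)\leq v(\mathfrak m)$ fails in general. For example, on $X=\mathbb{A}^2$ take $D=\{y=0\}$, which has multiplicity $\leq 1$ everywhere. Let $E$ be the exceptional divisor of the weighted blow-up of the origin with weights $(1,k)$, $k\geq 2$, so $v(x)=1$ and $v(y)=k$. Then $v(\mathfrak m)=1$ but $v(D)=k$.

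The lemma still holds in this example, since $a_E(X,0)=1+k$ gives $a_E(X,D)=1$. But it holds only because $a_E(X,0)$ is itself far above $c\,v(\mathfrak m)=2$. Your second bullet is true (it says $\operatorname{lct}(\mathfrak m)=c$ at a regular point), but it cannot be paired with any bound of $v(D)$ by $v(\mathfrak m)$ alone. So $a_E(X,D)\geq (c-1)v(\mathfrak m)$ is unproved, and your case $c\geq 3$, which rests on it, has no argument.

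You have also misread the convention. In the paper $\operatorname{mld}$ is an infimum of \emph{log} discrepancies, so ``canonical'' ($\operatorname{mld}\geq 1$) means $a_E(X,D)\geq 1$ for exceptional $E$, i.e.\ discrepancy $\geq 0$. That is exactly Koll\'ar's notion, so there is no mismatch. Had the first bullet been true, your second step would already have finished the proof. The target $a_E\geq 2$ that you set yourself is false: for a smooth curve $D$ on a surface and $E$ the blow-up of a point of $D$, $a_E=2-1=1$.

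What works is the induction you sketched for surfaces, run in every codimension and without hyperplane sections. Cutting by a general surface through $Z$ does not obviously control an arbitrary $E$ centred on $Z$.

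By Zariski's lemma, $E$ appears after finitely many blow-ups $Y_{i+1}\to Y_i$ of the successive centres $Z_i=c_{Y_i}(E)$. Localized at the generic point $\eta_i$ of $Z_i$, each step is the blow-up of the closed point of a regular local ring of dimension $c_i\geq 2$, and $\eta_{i+1}$ lies over $\eta_i$.

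Write $\sigma_i^*(K_X+D)=K_{Y_i}+D_i'+\sum_j(1-a_{E_j})E_j$, with $D_i'$ the strict transform of $D$. The new exceptional divisor then has log discrepancy $c_i-\operatorname{mult}_{\eta_i}D_i'-\sum_j(1-a_{E_j})\operatorname{mult}_{\eta_i}E_j$.

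Strict transforms do not gain multiplicity under these blow-ups, so $\operatorname{mult}_{\eta_i}D_i'\leq\operatorname{mult}_{\eta_0}D\leq 1$. By induction $a_{E_j}\geq 1$, so each term of the sum is nonpositive. Hence the new log discrepancy is at least $c_i-1\geq 1$, which is precisely $\operatorname{mld}(X,D)\geq 1$. (The paper gives no proof of its own here; it only cites Koll\'ar.)
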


Recall that  the multiplicity at a point of an irreducible closed substack or an effective algebraic cycle in weighted projective space $\mathbb P(\bf w)$ is defined  to be the multiplicity at a corresponding point of its inverse image in any orbifold chart $\mathbb{C}^n \rightarrow\left[\mathbb{C}^n / \mu_{a_i}\right] \cong\left\{x_i \neq 0\right\} \subset \mathbb P(\bf w)$. This definition is independent of the index $i$, since the different orbifold charts are étale-locally isomorphic.  We have  the following lemma that determines  bounds for the multiplicity in this setting (\cite{JK1}  Proposition 11).

\begin{lem} Let $Y \subset \mathbb{P}(w_{0},w_{1},\dots, w_{n})$ be a $m$-dimensional subvariety of a weighted projective space. Assume that $Y$ is not contained in the singular locus and that $w_{0}\geq w_{1}\geq \dots \geq w_{n}$. Let $Y_i \subset \mathbb{A}^n$ denote the preimage of $Y$ in the orbifold chart

$$
\mathbb{A}^n \rightarrow \mathbb{A}^n / \mathbb{Z}_{a_i} \cong \mathbb{P}\left(a_0, \ldots, a_n\right) \backslash\left(x_i=0\right).
$$
Then for every $i$ and every $p \in Y_i$, 
$$
\operatorname{mult}_p Y_i \leq (w_{0}w_{1}\dots w_{m})\left(Y \cdot \mathcal{O}(1)^m\right),
$$ 
That is, $$mult_{p}Y\leq (w_{0}w_{1}\dots w_{m})deg(Y)$$ 
    for each $p$ in $Y$.
\end{lem}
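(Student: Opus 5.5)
We would prove the bound by induction on the codimension, cutting $Y$ successively by hypersurfaces $\{x_j=0\}$ for suitable large-weight coordinates, following Johnson--Kollár. First, fix $i$ and $p\in Y_i$. Since multiplicity is étale local and the orbifold charts are étale-locally isomorphic, we may translate the problem into the cone: let $C_Y\subset\mathbb{A}^{n+1}$ be the affine cone over $Y$ (the closure of the preimage of $Y$ under $\mathbb{A}^{n+1}\setminus\{0\}\to\mathbb{P}(\mathbf w)$). Then $Y_i$ is, étale locally, the slice $C_Y\cap\{x_i=1\}$, and near a point $\tilde p$ of the cone with $x_i(\tilde p)\neq 0$ the cone is locally $Y_i\times\mathbb{C}^*$. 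Hence $\operatorname{mult}_p Y_i=\operatorname{mult}_{\tilde p}C_Y$, and the statement reduces to bounding the multiplicity of the $(m+1)$-dimensional weighted cone at points off the origin.

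Next, we choose $m$ coordinates among those of largest weight. Writing the degree as $\int_Y c_1(\mathcal O(1))^m=\deg Y$, we intersect $Y$ with $m$ general weighted hypersurfaces. Concretely, for $N$ divisible by all weights, general members $H_1,\dots,H_m\in|\mathcal O(N)|$ through $p$ meet $Y$ in a zero-dimensional scheme of length $N^m\deg Y$ counted on the stack. Then $\operatorname{mult}_p Y\cdot\prod\operatorname{mult}_p H_k\le$ the local intersection number at $p$, which is at most $N^m\deg Y$ times the order of the stabilizer correction. This alone gives a bound with $N^m$, which is too weak. The refinement is to use instead the hypersurfaces $x_{j}^{N/w_j}-c\,x_i^{N/w_i}$, or better, linear projections. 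Specifically, we project the cone $C_Y$ from the coordinates $x_{m+1},\dots,x_n$ (the smallest weights) onto the span of $x_0,\dots,x_m$. For general choice after a weighted coordinate change, this gives a finite map $C_Y\to\mathbb{A}^{m+1}$ that is quasi-homogeneous. Its degree equals $w_0\cdots w_m\deg Y$: the degree of the induced map $Y\dashrightarrow\mathbb{P}(w_0,\dots,w_m)$ is $\deg Y/\deg\mathbb{P}(w_0,\dots,w_m)=w_0\cdots w_m\deg Y$. Multiplicity at a point of a finite cover of a smooth variety is bounded by the local degree of the projection when the projection is general enough (the tangent cone misses the kernel), hence $\operatorname{mult}_{\tilde p}C_Y\le w_0\cdots w_m\deg Y$.

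The main obstacle is the genericity of the projection. Weighted-homogeneous coordinate changes only allow adding to $x_j$ polynomials in coordinates of weight dividing appropriately, so we cannot freely choose a linear projection whose kernel avoids the tangent cone of $C_Y$ at $\tilde p$. This is where the hypotheses enter: the ordering $w_0\ge\dots\ge w_n$ (we project onto the largest weights) and the assumption that $Y$ is not in the singular locus. For the projection to be finite we need $C_Y\cap\{x_0=\dots=x_m=0\}=\{0\}$. I would first try to achieve this by replacing each $x_k$, $k\le m$, with a general combination of powers $x_k+\sum_j c_j x_j^{w_k/w_j}$, and more generally with general weighted polynomials of degree $w_k$. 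I would then argue via the local degree at $\tilde p$, using that the local degree is at least the multiplicity once the fibre through $\tilde p$ is transverse. If this transversality fails, the fallback is to replace the projection by one of the same degree using powers $x_k^{L/w_k}$, and to absorb the resulting factor via the stabilizer order of the chart.
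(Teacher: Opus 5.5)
The paper gives no proof of this lemma; it quotes it from \cite{JK1}. So your proposal has to stand on its own, and as written it has a genuine gap.

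What works: the reduction $\operatorname{mult}_pY_i=\operatorname{mult}_{\tilde p}C_Y$ is correct, and so is the principle that a finite map bounds multiplicity. The obstacle you single out (transversality of the projection) is not real. For any finite $\Phi\colon C_Y\to\mathbb{A}^{m+1}$ with $\Phi(\tilde p)=q$ you have $\mathfrak m_q\mathcal O_{C_Y,\tilde p}\subseteq\mathfrak m_{\tilde p}$, hence $\operatorname{mult}_{\tilde p}C_Y\le e(\mathfrak m_q\mathcal O_{C_Y,\tilde p})\le\deg\Phi$, with no genericity needed.

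The gap is the \emph{existence} of a finite weighted projection of the right degree. You need weighted homogeneous $g_0,\dots,g_m$ with $\prod_k\deg g_k\le w_0\cdots w_m$ and $C_Y\cap V(g_0,\dots,g_m)=\{0\}$, i.e.\ a homogeneous system of parameters of $\mathbb{C}[x]/I_Y$ in those degrees. Because the systems $|\mathcal O(w_k)|$ have base points, this can fail, and neither hypothesis of the lemma prevents it.

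Example: in $\mathbb{P}(5,3,2)$ let $Y=V(F)$, with $F$ a general combination of $x_0x_1^2,\ x_0x_2^3,\ x_1x_2^4,\ x_1^3x_2$ (all monomials of degree $11$).
\begin{itemize}
\item $Y$ is an irreducible curve through all three coordinate points, and it is not contained in the singular locus.
\item The only forms of degree $5$ and $3$ are $ax_0+bx_1x_2$ and $cx_1$, and they vanish simultaneously at $[0:0:1]\in Y$.
\item Projecting to any two of the coordinates is not finite either, since the zero fibre contains the remaining coordinate axis.
\item Running through the available degrees shows that every finite weighted $\Phi$ has $\deg g_0\deg g_1\ge 30$.
\end{itemize}
So your method gives at best $\operatorname{mult}\le 30\deg Y=11$, not $w_0w_1\deg Y=11/2$. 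The fallback with powers $x_k^{L/w_k}$ only raises the degrees. A stabilizer factor cannot compensate at points with trivial stabilizer.

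A route that does work avoids systems of parameters altogether.
\begin{enumerate}
\item Multiplicity is upper semicontinuous and constant on $\mathbb{C}^*$-orbits, whose closures contain the vertex. Hence $\operatorname{mult}_{\tilde p}C_Y\le\operatorname{mult}_0C_Y$.
\item Degenerate $I_Y$ to an initial monomial ideal $J$ via a diagonal one-parameter subgroup. The weighted Hilbert function, and hence $\deg Y$, is unchanged.
\item Semicontinuity in the flat family gives $\dim\mathbb{C}[x]/(J+\mathfrak m^k)\ge\dim\mathbb{C}[x]/(I_Y+\mathfrak m^k)$ for all $k$, so $\operatorname{mult}_0$ can only increase.
\item Let the $(m+1)$-dimensional components of $V(J)$ be coordinate subspaces $L_T=\{x_j=0,\ j\notin T\}$ with lengths $\mu_T$. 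Then $\operatorname{mult}_0=\sum_T\mu_T$, while $\deg Y=\sum_T\mu_T/\prod_{j\in T}w_j\ge\sum_T\mu_T/(w_0\cdots w_m)$, since each $T$ has $m+1$ elements.
\end{enumerate}
This is exactly the claimed bound.
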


Using the tools given above, we can show  the exceptionality of the quasismooth points. On the other hand, at the points where $X$ is not quasismooth, we use a rather new technique developed by Totaro in \cite{To} which involves the notion of weighted tangent cone which considers the insights of a  new method of resolution in charactertistic 0 which is much better suited for computations of various birational invariants, such as $\log$ canonical thresholds: stack-theoretic weighted blow-ups  \cite{ATW, Mc}, 
For explicit coordinate charts of the stack-theoretic weighted blow-up see Section 3.4 in \cite{ATW}. 

\begin{defi}
    Let $X\subset\mathbb{C}^{n+1}$ be a hypersurface that contains the origin and let ${\bf w}=(w_{0},w_{1},\dots w_{n})$ be a weight vector. Given the stack-theoretic weighted blow-up $f:X'\to X$ at the origin with respect to $\bf w$, we define its $\bf w$-weighted tangent cone $X^c\subset\mathbb{P}({\bf w})$ as the exceptional divisor in $X'$. Here  $\mathbb{P}({\bf w})=\left[\left(\mathbb{A}^{n+1}-0\right) / \mathbb{C}^*\right]$ is viewed as a stack.
\end{defi}

In case there is no ambiguity, we suppress the prefix $\bf w$ and write $X^c$ as the weighted tangent cone of the hypersurface $X$. In the following lemma, whose proof can be found in \cite{To}, we see that the weighted blow-up $f:X'\to X$ preserves a singularity of type lc under certain conditions.   
\begin{lem}
    Let $X$ be a hypersurface in $\mathbb{C}^{n+1}$ that contains the origin, and $D$ be an effective $\mathbb{Q}$-Cartier $\mathbb{Q}$-divisor in $X$.  If $D^{c}$ denotes the weighted tangent cone of $D$, then $K_{X^{c}}+D^{c}\sim_{\mathbb{Q}}\mathcal{O}_{X^{c}}(r)$ for some $r\in\mathbb{Q}$. Moreover, if the pair $(X^{c},D^{c})$ is lc and $r\leq0$, then $(X,D)$ is lc near the origin.
\end{lem}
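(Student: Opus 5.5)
The plan is to compare $(X,D)$ with $(X^c,D^c)$ through the stack-theoretic weighted blow-up $\pi\colon X'\to X$ at the origin with weights $\mathbf w$, and to read off the log discrepancy of the exceptional divisor $E\cong X^c$ from degree considerations. Write $X=\{f=0\}\subset\mathbb{C}^{n+1}$ and let $f_{\mathbf w}$ be the lowest-weight part of $f$, of weight $d_X$. Then $X^c=\{f_{\mathbf w}=0\}\subset\mathbb{P}(\mathbf w)$, viewed as a hypersurface in the weighted projective stack. Similarly, write $D$ locally through weighted-homogeneous lowest parts: if $D=\sum c_i\{g_i=0\}$, with $g_i$ of lowest weight $e_i$, then $D^c=\sum c_i\{(g_i)_{\mathbf w}=0\}$. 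In the general $\mathbb{Q}$-Cartier case, use a multiple $mD$ that is Cartier and set $e=\operatorname{wt}_{\mathbf w}(D)=\frac1m\operatorname{wt}(g)$. Adjunction on $\mathbb{P}(\mathbf w)$ then gives
\[
K_{X^c}\sim \mathcal{O}_{X^c}\Bigl(d_X-\sum w_i\Bigr),\qquad D^c\sim_{\mathbb{Q}}\mathcal{O}_{X^c}(e),
\]
so $K_{X^c}+D^c\sim_{\mathbb{Q}}\mathcal{O}_{X^c}(r)$ with $r=d_X+e-\sum w_i$. This proves the first assertion.

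For the second assertion, compute on the weighted blow-up of $\mathbb{C}^{n+1}$. The exceptional divisor $F$ satisfies $K_{\mathrm{Bl}}=\pi^*K_{\mathbb{C}^{n+1}}+(\sum w_i-1)F$, and $\pi^*X=X'+d_XF$. Adjunction restricted to $X'$, with $E=F|_{X'}$, gives
\[
K_{X'}+E=\pi^*K_X+\Bigl(\sum w_i-d_X\Bigr)E,\qquad \pi^*D=D'+eE,
\]
and hence
\[
K_{X'}+D'+E=\pi^*(K_X+D)-rE.
\]
Since $r\le 0$, the coefficient of $E$ in the log pullback is $1+r\le 1$, so $E$ itself has log discrepancy $-r\ge 0$.

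Next I need the pair $(X',D'+E)$ to be lc near $E$, or $(X',D'+(1+r)E)$ when $r<0$. This is the main step, and I would use inversion of adjunction. The restriction $(K_{X'}+D'+E)|_E$ equals $K_E+D'|_E$ (modulo the different, which is trivial because we work on the smooth DM stack, where the weighted blow-up has no codimension-one stabilizer issues beyond $E$). Moreover $D'|_E=D^c$ and $E=X^c$. So the hypothesis that $(X^c,D^c)$ is lc, together with inversion of adjunction (Kawakita's version for normal stacks or varieties, reduced to varieties via Remark 2.2 and étale-in-codimension-one covers), shows that $(X',D'+E)$ is lc in a neighborhood of $E$.

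If $X^c$ is not normal, I would first note that lc of $(X^c,D^c)$ presupposes normality or semi-normality; the version with slc may be needed. This is the point I expect to be most delicate.

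Finally, because $r\le 0$, decreasing the coefficient of $E$ from $1$ to $1+r$ preserves the lc property. Thus $(X',D'+(1+r)E)$ is lc near $\pi^{-1}(0)$, and it is the crepant log pullback of $(X,D)$. Every divisor over the origin appears over $X'$ with discrepancy computed from this crepant pair, so $(X,D)$ is lc near the origin. The remaining issues to double-check are that the weighted tangent cone is reduced and irreducible enough for adjunction to hold without extra correction terms, and that passing between the stack $X'$ and its coarse space does not change lc-ness; the latter is exactly Remark 2.2.
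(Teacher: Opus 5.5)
Your proposal is correct and takes essentially the same route as the proof the paper cites from Totaro. That proof passes to the stack-theoretic weighted blow-up, obtains $K_{X'}+D'+(1+r)E=\pi^*(K_X+D)$ with $E\cong X^c$, applies Kawakita's inversion of adjunction along $E$, and then lowers the coefficient of $E$ to $1+r\le 1$. Your worry about non-normal $X^c$ is unnecessary, since lc of $(X^c,D^c)$ already presupposes that $X^c$ is normal, which forces $X'$ to be normal near the Cartier divisor $E$; also, the different vanishes because $E$ is Cartier on the Gorenstein hypersurface $X'$ and $D'$ is $\mathbb{Q}$-Cartier, not because $X'$ is smooth.
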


We have the following definition.
\begin{defi}
    Let $S\subset A^{n+1}$ be a closed subscheme. Given a weight vector ${\bf w}=(w_{0},w_{1},\dots, w_{n})$, we define the weighted multiplicity of $S$ at the origin as
    $$mult_{{\bf w}}S = deg(S^c)$$
    where $S^c$ is the weighted tangent cone of $S$  seen as a substack of $\mathbb{P}({\bf w})$.
\end{defi}
Here, Totaro obtains a bound for the weighted multiplicity (see Lemma 4.4 in \cite{To}).
\begin{lem}
    Let $X\subset\mathbb{P}({\bf{w}})$ be an irreducible closed substack, where ${\bf w}=(w_{0},w_{1},\dots, w_{n+1})$. In $z_{n+1}=1$, we see $X$ as a variety $\tilde{X}\subset\mathbb{C}^{n+1}$. Then the weighted  multiplicity of $\tilde{X}$ at the origin with respect to the weight vector ${\tilde{\bf w}}=(w_{0},w_{1},\dots,w_{n})$ verifies the inequality
    $$mult_{\tilde{\bf w}}(\tilde{X})\leq w_{n+1}deg(X)$$
\end{lem}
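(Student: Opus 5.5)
We prove the bound $\operatorname{mult}_{\tilde{\bf w}}(\tilde{X})\leq w_{n+1}\deg(X)$ by computing both sides as degrees of stacks of the same dimension and relating them through a degeneration. Let $r=\dim X$. Let $\mathcal{C}_X\subset\mathbb{A}^{n+2}$ be the affine cone over $X$, i.e.\ the preimage of $X$ in $\mathbb{A}^{n+2}-0$ together with the origin. It is $\mathbb{C}^*$-invariant of dimension $r+1$, and its degree is the normalized leading coefficient of its weighted Hilbert function. The chart $\{z_{n+1}\neq 0\}$ is $[\mathbb{C}^{n+1}/\mu_{w_{n+1}}]$, and $\tilde{X}$ is the slice $\mathcal{C}_X\cap\{z_{n+1}=1\}$.

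The plan has four steps.

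\begin{enumerate}
\item \emph{Degenerate the slice.} Consider the one-parameter family of slices $\mathcal{C}_X\cap\{z_{n+1}=t\}$. Rescaling by the weighted $\mathbb{C}^*$-action identifies the fibre over $t\neq 0$ with $\tilde{X}$ (up to the $\mu_{w_{n+1}}$-ambiguity in extracting a $w_{n+1}$-th root). As $t\to 0$, the flat limit is a cycle supported on $\mathcal{C}_X\cap\{z_{n+1}=0\}$.

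\item \emph{Identify the weighted tangent cone.} The weighted tangent cone $\tilde{X}^c\subset\mathbb{P}(\tilde{\bf w})$ is the $\operatorname{Proj}$ of the associated graded ring of $\tilde{X}$ for the filtration by $\tilde{\bf w}$-weighted order. Algebraically this is the same as taking the initial ideal with respect to the grading in which $z_{n+1}$ has degree $0$ and the other variables have degree $w_i$; equivalently, it is the limit of the dilations of $\tilde{X}$.

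\item \emph{Compare dimensions of graded pieces.} Let $I$ be the homogeneous ideal of $X$, and write $A=\mathbb{C}[z_0,\dots,z_{n+1}]/I$. The coordinate ring of $\tilde{X}$ is $A/(z_{n+1}-1)$. Its filtration by $\tilde{\bf w}$-weight is induced by the grading of $A$: the filtered piece $F_{\leq k}$ is the image of $\bigoplus_{j\leq k}A_j$, and this image is exactly the image of $A_k$ under dehomogenization, since multiplication by $z_{n+1}$ raises degree. Because $z_{n+1}$ is a nonzerodivisor on $A$ (as $X$ is irreducible and not contained in $\{z_{n+1}=0\}$), this gives
\[
\dim \mathbb{C}[\tilde{X}]_{\leq k}=\sum_{j\equiv k\ (\mathrm{mod}\ w_{n+1}),\ j\leq k}\dim (A/z_{n+1}A)_j\leq \dim A_k .
\]
The inequality holds because $\dim A_k=\sum_{j\leq k,\ j\equiv k\ (w_{n+1})}\dim(A/z_{n+1})_j$ exactly.

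\item \emph{Pass to leading asymptotics.} The left-hand side grows like $\operatorname{mult}_{\tilde{\bf w}}(\tilde{X})\,k^{r}/r!$, normalized by the stack degree convention $1/(w_0\cdots w_n)$. The right-hand side grows like $\deg(X)\,k^r/r!$, with the extra factor $w_{n+1}$ coming from the stacky normalization $1/(w_0\cdots w_{n+1})$. Comparing leading coefficients, with care about the residue-class summation contributing a factor $1/w_{n+1}$ in one and not the other, yields $\operatorname{mult}_{\tilde{\bf w}}(\tilde{X})\leq w_{n+1}\deg(X)$.
\end{enumerate}

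We expect step 4 to be the main obstacle: the periodicity in the residue class modulo $w_{n+1}$ and the normalizations of stack degrees must be handled consistently. We would handle this by working on the quotient $A/z_{n+1}A$. The quantity $\deg(X)\cdot w_{n+1}$ is the degree of the hyperplane section $X\cap\{z_{n+1}=0\}$ as a cycle in $\mathbb{P}(w_0,\dots,w_n)$. The weighted tangent cone $\tilde{X}^c$ is dominated, as a cycle, by this section; in fact the two agree when $X$ does not pass through the point $[0:\dots:0:1]$ in a bad way. The inequality then becomes the statement that the initial degeneration is contained in the hyperplane section cycle, which follows from upper semicontinuity in the flat family of step 1.
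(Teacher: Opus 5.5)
\textbf{There is a genuine gap.} Your Step 2 describes $\tilde X^c$ correctly, but Steps 1, 3 and 4 never use that description, because they look at $\tilde X$ from the wrong end.

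\emph{Step 1.} The slice $\mathcal{C}_X\cap\{z_{n+1}=t\}$ is the weighted dilation $\lambda\cdot\tilde X$ with $\lambda^{w_{n+1}}=t$. Letting $t\to0$ therefore shrinks $\tilde X$, and the flat limit is the affine cone over $X\cap\{z_{n+1}=0\}$, i.e.\ $\tilde X$ seen from infinity. The weighted tangent cone at the origin is instead the limit $t\to\infty$.

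\emph{Step 3.} The filtration $F_{\le k}$ by weighted \emph{degree} has associated graded ring $A/z_{n+1}A$, again the section at infinity. $\tilde X^c$ comes from the filtration by weighted \emph{order} at the origin. So Step 3 is a tautology (as you observe, the inequality is an equality), and its asymptotics only reprove $\deg(X\cap\{z_{n+1}=0\})=w_{n+1}\deg X$.

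\emph{Step 4.} The claim that the left-hand side grows like $\operatorname{mult}_{\tilde{\mathbf w}}(\tilde X)\,k^r/r!$ is false. If $p=[0:\dots:0:1]\notin X$, then $\operatorname{mult}_{\tilde{\mathbf w}}(\tilde X)=0$, while your left-hand side is still $\dim A_k$. Indeed, since Step 3 is an equality, your argument would give $\operatorname{mult}_{\tilde{\mathbf w}}(\tilde X)=w_{n+1}\deg X$ in every case.

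\emph{The fallback.} Your last paragraph does not rescue this: $[\tilde X^c]$ is not dominated by $[X\cap\{z_{n+1}=0\}]$ as cycles. The two agree when $X$ is a weighted cone with vertex $p$, and when $p\notin X$ the tangent cone is empty. For the conic $X=\{z_1z_2=z_0^2\}\subset\mathbb{P}^2$ with $p=[0:0:1]$, the tangent cone is the point $[1:0]\in\mathbb{P}^1$, while $X\cap\{z_2=0\}=2[0:1]$; only the degrees compare. No semicontinuity in the $t\to0$ family can detect $\tilde X^c$, because that family forgets the germ of $\tilde X$ at the origin.

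\textbf{How to fix it.} The paper gives no proof of this lemma; it quotes it from Totaro. The standard argument degenerates \emph{toward} $p$. Use $[z_0:\dots:z_n:z_{n+1}]\mapsto[z_0:\dots:z_n:sz_{n+1}]$ with $s\to0$; on the chart $z_{n+1}=1$ this is dilation by $\lambda\to\infty$.
\begin{itemize}
\item The flat limit of $X$ has the same degree as $X$.
\item Its part in the chart $z_{n+1}=1$ is the affine weighted tangent cone of $\tilde X$ at the origin.
\item Hence the limit cycle is the weighted projective cone over $\tilde X^c$ with vertex $p$, of degree $\deg(\tilde X^c)/w_{n+1}$, plus an effective cycle supported in $\{z_{n+1}=0\}$. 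The inequality follows.
\end{itemize}

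Your Hilbert-function route can be repaired in the same spirit by switching to the order filtration. Let $\operatorname{gr}_j=F^{\ge j}/F^{\ge j+1}$ for the weighted-order filtration on $\mathbb{C}[\tilde X]$.
\begin{itemize}
\item Every class in $\operatorname{gr}_j$ is represented by a weighted-homogeneous $G_j(z_0,\dots,z_n)$ of weight $j$.
\item $z_{n+1}^{(k-j)/w_{n+1}}G_j\in A_k$ whenever $j\le k$ and $j\equiv k\pmod{w_{n+1}}$.
\item Filtering the (injective) image of $A_k$ in $\mathbb{C}[\tilde X]$ by order then gives a genuine inequality, $\sum_{j\le k,\ j\equiv k}\dim\operatorname{gr}_j\le\dim A_k$. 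This is your displayed formula with $(A/z_{n+1}A)_j$ replaced by $\operatorname{gr}_j$.
\item Sum over $k\le K$, using $\sum_{j\le J}\dim\operatorname{gr}_j\sim\deg(\tilde X^c)J^r/r!$ and $\sum_{k\le K}\dim A_k\sim\deg(X)K^{r+1}/(r+1)!$. This yields $\deg(\tilde X^c)/w_{n+1}\le\deg X$.
\end{itemize}
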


Now, we will prove the following result for hypersurfaces $X$ defined by polynomials of type A. 
 \begin{prop}
     Let $X: f=0$ be a Fano hypersurface in $\mathbb{P}({\bf w})$ where the weight vector ${\bf w}$ is given as in 2.1 subject to $w_{0}+w_{1}>I$ and $f$ is a polynomial of type A, that is, $$f=z_{0}^{a_{0}}+z_{0}^{\beta_{0}}z_{1}^{\beta_{1}}+z_{4}z_{2}^{a_{2}}+z_{2}z_{3}^{a_{3}}+z_{3}z_{4}^{a_{4}}=0, \ \mbox{ where }\beta_{0}\geq 2$$  of degree $d=m_{2}m_{3}$, such that $\beta_{0}\mid m_{3}$ and $\beta_{0}\in\left]\frac{I}{v_{0}m_{2}^2}; \frac{d}{w_{0}+w_{1}-I}\right[$. If $Id<\min\{w_{i}w_{j}\}$, then $X$ is exceptional.
 \end{prop}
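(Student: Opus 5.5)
Since $X$ is a klt Fano hypersurface (Proposition 3.1, using $\beta_0<\frac{d}{w_0+w_1-I}$), by Lemma 2.3 it suffices to find $\lambda>1$ such that $(X,\lambda D)$ is lc for every effective $D\sim_{\mathbb{Q}}-K_X=\mathcal{O}_X(I)$. Well-formedness lets us work on the stack $\mathcal{X}\subset\mathcal{P}(\mathbf{w})$ (Remark 2.2), where $D$ has degree $I\cdot d/(w_0\cdots w_4)$. We may assume $D$ is irreducible: lc thresholds are handled component by component, after writing $D$ as a convex combination of irreducible divisors each $\mathbb{Q}$-linearly equivalent to $-K_X$. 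If $D$ is the restriction of a coordinate hyperplane $\{z_i=0\}$ (suitably scaled), we check directly that $(X,\lambda D)$ is lc. So assume $D$ contains no component of $X\cap\{z_i=0\}$. The argument then splits into the quasismooth points of $X$ and the unique non-quasismooth point $p=[0:1:0:0:0]$ found in the proof of Proposition 3.1.

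\emph{Quasismooth points.} Take $p\in X$ with $p\ne[0:1:0:0:0]$. In an orbifold chart $\{z_k\neq 0\}$ the cover of $X$ is smooth near $p$, so by Lemma 4.1 it suffices to show $\operatorname{mult}_p(\lambda D)\le 1$. Following Johnson--Kollár, choose a coordinate $z_j$ with $z_j(p)=0$ and $\{z_j=0\}\not\supset D$. The curve $C=D\cap\{z_j=0\}$ has degree $I d/(w_0\cdots w_4)\cdot w_j$, and Lemma 4.2 applied to $C$ (dimension 1) gives
\[
\operatorname{mult}_p D\le \operatorname{mult}_p C\le w_kw_l\cdot\frac{I\,d\,w_j}{w_0\cdots w_4}\cdot\frac{1}{w_j}\cdots,
\]
which after the choice of $j$ reduces to the estimate $\operatorname{mult}_pD\le Id/(w_iw_j)$ for suitable $i,j$. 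The hypothesis $Id<\min\{w_iw_j\}$ then gives $\operatorname{mult}_p D<1$, so some $\lambda>1$ works. The bookkeeping of which chart and which cutting hyperplane to use at each stratum is routine; I would organize it as Johnson--Kollár do in \cite{JK1}.

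\emph{Non-quasismooth point $p=[0:1:0:0:0]$.} This is the main obstacle. In the chart $z_1=1$, $X$ is locally $S/\mathbb{Z}_{w_1}$ with $S:\ z_0^{a_0}+z_0^{\beta_0}+z_4z_2^{a_2}+z_2z_3^{a_3}+z_3z_4^{a_4}=0$, and near the origin the term $z_0^{\beta_0}$ dominates $z_0^{a_0}$. I would apply Lemma 4.3 with the weight vector $\mathbf{u}=(u_0,\tilde w_2,\tilde w_3,\tilde w_4)$, where the $\tilde w_i$ come from Lemma 3.1 and $u_0=d/\beta_0$, which is an integer because $\beta_0\mid m_3$. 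With these weights the weighted tangent cone is
\[
S^c=\{z_0^{\beta_0}+z_4z_2^{a_2}+z_2z_3^{a_3}+z_3z_4^{a_4}=0\}\subset\mathcal{P}(\mathbf{u}),
\]
a quasismooth hypersurface of degree $d$ (the cycle block is quasismooth by Remark 2.1). Adjunction gives $K_{S^c}=\mathcal{O}(d-u_0-\sum\tilde w_i)$, and $\sum\tilde w_i=\sum_{i\ge2}w_i$. The upper bound $\beta_0<\frac{d}{w_0+w_1-I}$ is equivalent to $u_0+\sum\tilde w_i-d>I-I=0$, i.e. $u_0>w_0+w_1-I$, so $S^c$ is Fano of index $r_0=u_0+\sum_{i\ge 2}w_i-d>0$. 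For the pulled-back divisor, $\lambda D^c\sim \mathcal{O}(\lambda\, s)$ with $s=\operatorname{ord}_{\mathbf{u}}D$, so $K_{S^c}+\lambda D^c\sim\mathcal{O}(-r_0+\lambda s)$. We must show $\lambda s\le r_0$ and that $(S^c,\lambda D^c)$ is lc.

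To bound $s$, use Lemma 4.4 applied to $D$ in the chart $z_1=1$: $\operatorname{mult}_{\mathbf{u}}D\le w_1\cdot\deg D$ in suitably rescaled form. This should give $s\cdot(\deg S^c\text{-normalized})\le w_1 I d/(w_0\cdots w_4)$, hence an upper bound on $s$ of the form $I\cdot(\text{const})$. I expect this to be exactly where the lower bound $\beta_0>\frac{I}{v_0m_2^2}$ enters to give $\lambda s<r_0$. For the lc property of $(S^c,\lambda D^c)$, since $S^c$ is quasismooth, I would repeat the multiplicity argument of the first step on $S^c\subset\mathcal{P}(\mathbf{u})$, using Lemma 4.2 together with the degree bound on $D^c$ coming from $s$. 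The condition $Id<\min w_iw_j$ should again guarantee that the multiplicities are less than 1. The hardest part is getting the numerical constants to line up, namely comparing the weights $u_0,\tilde w_i$ to the $w_i$ through $m_2,m_3,v_0$. Once that is done, Lemma 4.3 gives that $(X,\lambda D)$ is lc at $p$ (finite quotients preserve lc), which completes the proof.
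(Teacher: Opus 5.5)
Your outline matches the paper's: a multiplicity bound plus Lemma 4.1 at the quasismooth points, and at $[0:1:0:0:0]$ a weighted tangent cone in the chart $z_1=1$ plus Lemma 4.3, with $q\le 0$ checked via Lemma 4.4 and log canonicity of the cone pair via Lemma 4.2.

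At the quasismooth points, the hyperplane cut and the reduction to irreducible $D$ are unnecessary. Lemma 4.2 applied directly to the surface $D$ gives $\operatorname{mult}_pD\le\max\{w_iw_jw_k\}\deg D=Id/\min\{w_iw_j\}<1$, since multiplicity and degree are additive. Your reduction would also need $\operatorname{Cl}(X)\otimes\mathbb{Q}\cong\mathbb{Q}$, which is not established for this non-quasismooth $X$.

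The cone step fails as written, because $\mathbf{u}$ is the wrong weight vector. The $\tilde w_i$ of Lemma 3.1 satisfy $a_2\tilde w_2+\tilde w_3=a_3\tilde w_3+\tilde w_4=a_4\tilde w_4+\tilde w_2=d$. So they homogenize the transposed cycle $z_3z_2^{a_2}+z_4z_3^{a_3}+z_2z_4^{a_4}$, and they enter Proposition 3.1 only as coefficients of a point of the Newton polyhedron. The block $z_4z_2^{a_2}+z_2z_3^{a_3}+z_3z_4^{a_4}$ is homogenized by $w_2,w_3,w_4$ ($a_2w_2+w_4=d$, etc.).

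For example, take $\mathbf{w}=(118,118,35,185,135)$ and $\beta_0=2$. Then $(\tilde w_2,\tilde w_3,\tilde w_4)=(35,135,185)$, the cycle monomials have $\mathbf{u}$-weights $640,440,690$, and $z_0^2$ has $590$. So the $\mathbf{u}$-weighted tangent cone is $\{z_2z_3^3=0\}$, which is non-reduced and reducible, and Lemma 4.3 cannot be used. Your index computation comes out right only because $\sum\tilde w_i=\sum_{i\ge 2}w_i$. Use instead the primitive vector $\mathbf{w}^c=(r,v_2,v_3,v_4)$ with $r=m_3/\beta_0$, and the cone $z_0^{\beta_0}+z_4z_2^{a_2}+z_2z_3^{a_3}+z_3z_4^{a_4}$ of degree $m_3$, as the paper does. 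Its index is $I^c=r+v_2+v_3+v_4-m_3$, which is your $r_0/m_2$ after correction.

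Second, the decisive estimates are only announced. You need a bound on $s=\operatorname{ord}_ED$, uniform over $D\in|-K_X|_{\mathbb{Q}}$, with $\lambda s\le I^c$ for some $\lambda>1$. You then need multiplicity $<1$ for $(\tilde X^c,\lambda\tilde D^c)$. The paper gets both from the inequality $\deg\tilde D^c\le w_1\deg D=Id/(w_0w_2w_3w_4)$, its (4.4). That gives $\lambda<I^c\beta_0v_0m_2^2/I$, and, via Lemma 4.2, two cases $\max\{w^c_iw^c_j\}=rv_i$ or $v_iv_j$, handled by $rv_i<w_0w_i$, $v_iv_j<w_iw_j$ and $Id<\min\{w_iw_j\}$.

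This is exactly where your ``suitably rescaled'' is not innocuous. Lemma 4.4 concerns the inherited weights $(w_0,w_2,w_3,w_4)=(\beta_0v_0r,m_2v_2,m_2v_3,m_2v_4)$. These are never proportional to $(r,v_2,v_3,v_4)$, because $m_2=\beta_0v_0+\beta_1v_1$. The transferred inequality is in fact false. Take $D=\frac{I}{w_0}(X\cap\{z_0=0\})$. The local divisor $\tilde D=\frac{I}{w_0}\{z_0=g=0\}$ ($g$ the cycle block) is already a weighted cone, and $\deg\tilde D^c=\frac{I}{v_0v_2v_3v_4}=m_2^2\,w_1\deg D$. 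So you must supply an honest bound rather than follow that step.

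One bound that works goes as follows.
\begin{itemize}
\item Write $kD=\operatorname{div}(G)$ with $\deg G=kI$.
\item The associated graded of $\mathbb{C}[z_0,\dots,z_4]/(f)$ for the monomial valuation with weights $(r,0,v_2,v_3,v_4)$ is $\mathbb{C}[z_0,\dots,z_4]/(z_0^{\beta_0}z_1^{\beta_1}+g)$.
\item A monomial of $\mathbf{w}$-degree $N$ has weight at most $N\max\{r/w_0,1/m_2\}=N/(\beta_0v_0)$.
\item Hence $s\le I/(\beta_0v_0)$, with equality for the divisor above.
\end{itemize}
This still yields $q\le 0$ for some $\lambda>1$. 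Indeed, $Id<w_0w_1$, $I^c>0$, $\beta_0\ge 2$, $\beta_0\mid m_3$ and $\gcd(m_2,m_3)=1$ force $\beta_1>\beta_0$. Then $m_2I^c=I+rv_1(\beta_1-\beta_0)$, together with $Id<w_0w_1$ again, gives $I^c\beta_0v_0>I$.

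The multiplicity estimate on the cone, however, has to be redone with the weaker bound $\deg\tilde D^c\le I/(v_0v_2v_3v_4)$. In the case $\max\{w^c_iw^c_j\}=rv_i$ it now needs $rI<v_0v_jv_k$, which $Id<\min\{w_iw_j\}$ does not give directly.
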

\begin{proof}
    Let $X\subset\mathbb{P}(\textbf{w})$ be a Fano hypersurface defined by the polynomial $$X:f=z_{0}^{a_{0}}+z_{0}^{\beta_{0}}z_{1}^{\beta_{1}}+z_{4}z_{2}^{a_{2}}+z_{2}z_{3}^{a_{3}}+z_{3}z_{4}^{a_{4}}=0$$
    Since $\beta_{0}<\frac{d}{w_{0}+w_{1}-I}$, we obtain by a previous proposition that $X$ is klt. To show that $X$ is exceptional, by Lemma 2.4, we must prove that $\mbox{glct}(X)>1$. This means that we should ensure that there exists some $\lambda >1$ such that $(X,\lambda D)$ is lc for any $\mathbb{Q}$-divisor $D\in |-K_{X}|$. 
    
    We take $D\in|-K_{X}|$ to be a $\mathbb{Q}$-divisor on $X$. Then for any smooth point $p\in X$, we will show that there exists a real number $\lambda>1$ that does not depend of $D$ or the point $p$, such that $mult_{p}(\lambda D)\leq 1$. We notice that this is equivalent to
    $$mult_{p}(\lambda D)\leq 1 \Leftrightarrow \lambda \leq\frac{1}{mult_{p}D}$$
    As $D\sim_{\mathbb{Q}} -K_{X}=O_{X}(I)$  by the adjunction formula and since  $\dim(D)=2$, we obtain by Lemma 4.2:
$$ mult_{p}D\leq \max\{w_{i}w_{j}w_{k}\}deg(D)=\max\{w_{i}w_{j}w_{k}\}\left(-K_{X}\right) \cdot c_1(O(1))^{2},$$
 which equals  $$\dfrac{\max\{w_{i}w_{j}w_{k}\}Id}{w_{0}w_{1}w_{2}w_{3}w_{4}} = \dfrac{Id}{\min\{w_{i}w_{j}\}}<1,$$
where the intersection numbers are computed on the $3$-dimensional stack $X \subset \mathbb{P}(w_0, w_1, w_2, w_3, w_4).$ This implies that 
    $$\frac{1}{mult_{p}D}\geq \frac{\min\{ w_{i}w_{j}\}}{Id}>1.$$
    Then, we can take $\lambda\in \left] 1,\frac{\min\{ w_{i}w_{j}\}}{Id}\right[$ such that $mult_{p}(\lambda D)\leq 1$ for any $D\in|-K_{X}|$ and smooth point $p\in X$. By Lemma 4.1, we conclude that the pair $(X,\lambda D)$ is lc outside of the point $[0:1:0:0:0]$ when $\lambda\in \left] 1,\frac{\min\{ w_{i}w_{j}\}}{Id}\right[$.

    Now, we want to prove that $X$ is exceptional in the non-quasi-smooth point $[0:1:0:0:0]$. In the hyperplane $z_{1}=1$, the hypersurface $X$ is defined as 
    $$X:z_{0}^{a_{0}}+z_{0}^{\beta_{0}}+z_{4}z_{2}^{a_{2}}+z_{2}z_{3}^{a_{3}}+z_{3}z_{4}^{a_{4}}=0.$$
    Notice that this equation also defines a hypersurface $\tilde{X}$ in the affine space $\mathbb{C}^{4}$. Let us see that there exists some $\lambda>1$ such that the pair $(\tilde{X},\lambda \tilde{D})$ is lc near the origin, where $\tilde{D}$ is the affine version of the divisor $D$ restricted to $z_{1}=1$ . Here, we establish the weighted tangent cone $\tilde{X}^c\subset \mathbb{P}(\textbf{w}^c)$, to the hypersurface $\tilde{X}$ in the origin with respect to $\textbf{w}^{c}=(r,v_{2},v_{3},v_{4})$, where $r=m_{3}/\beta_{0}$. This is defined by the polynomial of degree $m_{3}$: 
    $$\tilde{X}^{c}:z_{0}^{\beta_{0}}+z_{4}z_{2}^{a_{2}}+z_{2}z_{3}^{a_{3}}+z_{3}z_{4}^{a_{4}}=0.$$ The monomial $z_{0}^{a_{0}}$ is not taken into account because its degree $d$ is greater than $m_{3}$. Clearly, $\tilde{X}^c$ is a quasi-smooth hypersurface. Moreover, since $\textbf{w}=(w_{0},w_{1},w_{2},w_{3},w_{4})$ is well-formed and $\gcd(v_{i},v_{j})=1$, we obtain that the hypersurface  $\tilde{X}^{c}\subset \mathbb{P}(\textbf{w}^c)$ is well-formed. 
    
    On the other hand, as $X$ is klt and $\tilde{X}^{c}$ is obtained by taking the quotient of $X$ restricted to $z_{1}=1$ by a finite group, we have $\tilde{X}^c$ is klt. In addition, if we write $\tilde{D}^{c}$ as the weighted tangent cone of $\tilde{D}$, which is $D$ restricted to $z_{1}=1$, we get that $\tilde{D}^{c}$ is an effective $\mathbb{Q}$-Cartier $\mathbb{Q}$-divisor. By Lemma 4.3, we have that $K_{\tilde{X}^c}+\lambda \tilde{D}^{c}\sim_{\mathbb{Q}}\mathcal{O}_{\tilde{X}^{c}}(q)$ for some $q\in\mathbb{Q}$, where $\lambda\in\mathbb{R}$. Next, in order to apply the second result of the Lemma 4.3, we shall show that there exists some $\lambda >1$ such that $q\leq 0$ and the pair $(\tilde{X}^c,\lambda \tilde{D}^c)$ is lc.
    \begin{itemize}
        \item \textbf{Claim 1:} There exists some $\lambda>1$, such that $q\leq 0$. 

        First, we will prove that the index $I^c=r+v_{2}+v_{3}+v_{4}-m_{3}$ of $\textbf{w}^c$ is greater than $1$. Since the hypersurface $\tilde{X}^c$ is well-formed, we obtain by the Adjuntion formula
        $$-K_{\tilde{X}^c}=\mathcal{O}_{\tilde{X}^c}(r+v_{2}+v_{3}+v_{4}-m_{3}).$$
        Due to the index $I=\sum w_{i}-d>0$ and $d=m_{2}m_{3}$, then we have 
        \begin{equation}
            m_{2}(v_{2}+v_{3}+v_{4})=d-w_{0}-w_{1}+I \Leftrightarrow v_{2}+v_{3}+v_{4}=m_{3}\left(1-\dfrac{w_{0}+w_{1}-I}{d}\right).
        \end{equation}
        Moreover, as $m_{3}=\beta_{0}r$ and $\beta_{0}<\dfrac{d}{w_{0}+w_{1}-I}$, we obtain
        \begin{equation*}
        m_{3}\left(1-\dfrac{w_{0}+w_{1}-I}{d}\right)>m_{3}\left(1-\dfrac{1}{\beta_{0}}\right)  = m_{3}-r.
        \end{equation*}
        Putting this into of the equality of the right-side in (4.2), we had proved $r+v_{2}+v_{3}+v_{4}-m_{3}>0$. Thus, we get $I^{c}\geq 1$. On the other hand, seeing that $K_{\tilde{X}^c}+\lambda \tilde{D}^{c}\sim_{\mathbb{Q}}\mathcal{O}_{\tilde{X}^{c}}(q)$, we have
        $$q\leq 0  \Leftrightarrow  \lambda \deg(\tilde{D}^c)\leq \deg(-K_{X^c}) = I^c\left(\dfrac{m_{3}}{rv_{2}v_{3}v_{4}}\right).$$
        Then
        \begin{equation}
            q\leq 0 \Leftrightarrow \lambda \leq \dfrac{I^cm_{3}}{rv_{2}v_{3}v_{4}\deg(\tilde{D}^c)}.
        \end{equation}
        In addition, by Lemma 4.4, we obtain
        \begin{equation}
            \deg(\tilde{D}^c)=mult_{\tilde{\bf w}^c}(\tilde{D})\leq w_{1}\deg(D) = w_{1}\left(\dfrac{Id}{w_{0}w_{1}w_{2}w_{3}w_{4}}\right)=\dfrac{Id}{w_{0}w_{2}w_{3}w_{4}}.
        \end{equation}
        This implies that
        $$\dfrac{I^cm_{3}}{rv_{2}v_{3}v_{4}\deg(\tilde{D}^c)}\geq \left(\dfrac{I^cm_{3}}{rv_{2}v_{3}v_{4}}\right)\left(\dfrac{w_{0}w_{2}w_{3}w_{4}}{Id}\right) = \dfrac{I^cw_{0}m_{2}^2}{Ir}.$$
        As $w_{0}=r\beta_{0}v_{0}$ and $m_{2}>1$, then we have
        $$\dfrac{I^cm_{3}}{rv_{2}v_{3}v_{4}\deg(\tilde{D}^c)}\geq\dfrac{I^cw_{0}m_{2}^2}{Ir}=\dfrac{I^c\beta_{0}v_{0}m_{2}^2}{I}>1.$$
        Therefore, we can choose some $\lambda\in\left]1,\dfrac{I^cw_{0}m_{2}^2}{Ir}\right[$, which implies that $q\leq 0$.
        \item \textbf{Claim 2:} There exists some $\lambda>1$, such that the pair $(\tilde{X}^c,\lambda \tilde{D}^c)$ is lc.

        In order to apply the Lemma 4.1, we note that $\tilde{X}^c$ is quasi-smooth and $\tilde{D}^c$ is an effective $\mathbb{Q}$-divisor. Thus, we just need to prove that there exists some $\lambda>1$ such that $mult_{p}(\lambda \tilde{D}^c)\leq 1$, for any $p\in \tilde{X}^c$. Indeed,  we have 
        \begin{equation}
            mult_{p}(\lambda \tilde{D}^c)\leq 1 \Leftrightarrow \lambda \leq \dfrac{1}{mult_{p}(\tilde{D}^c)}.
        \end{equation}
        On the other hand, since $\dim(\tilde{D}^c)=1$, by Lemma 4.2 we obtain
        \begin{equation}
           mult_{p}(\tilde{D}^c)\leq \max\{w_{i}^{c}w_{j}^{c}\}\deg(\tilde{D}^{c}) \Rightarrow  \dfrac{1}{mult_{p}(\tilde{D}^c)}\geq \dfrac{1}{ \max\{w_{i}^{c}w_{j}^{c}\}\deg(\tilde{D}^{c})},
        \end{equation}
        where $w_{i}^{c}$ and $w_{j}^{c}$ are weights in $\textbf{w}^c=(r,v_{2},v_{3},v_{4})$.
        
        Here, we have two possible scenarios:
        \begin{itemize}
            \item If $\max\{w_{i}^{c}w_{j}^{c}\}=rv_{i}$, where $i\in\{2,3,4\}$.

            In this case, we can suppose without loss of generality that $\max\{w_{i}^{c}w_{j}^{c}\}=rv_{2}$. From the inequality in (4.6) and (4.4), we get
            $$\dfrac{1}{mult_{p}(\tilde{D}^c)}\geq \dfrac{1}{rv_{2}\deg(\tilde{D}^c)}\geq \dfrac{1}{rv_{2}}\left(\dfrac{w_{0}w_{2}w_{3}w_{4}}{Id}\right).$$
            As $Id<w_{3}w_{4}$ and $rv_{2}<w_{0}w_{2}$, we have
            $$\dfrac{1}{mult_{p}(\tilde{D}^c)}\geq \dfrac{1}{rv_{2}}\left(\dfrac{w_{0}w_{2}w_{3}w_{4}}{Id}\right)>1.$$
            Then,  we can choose some $\lambda\in\left]1,\dfrac{1}{rv_{2}}\left(\dfrac{w_{0}w_{2}w_{3}w_{4}}{Id}\right)\right[$ such that it satisfies (4.5).
            \item If $\max\{w_{i}^{c}w_{j}^{c}\}=v_{i}v_{j}$, where $i,j\in\{2,3,4\}$.
            Here we can assume without loss of generality that $\max\{w_{i}^{c}w_{j}^{c}\}=v_{2}v_{3}$. Again, using the inequality in (4.6) and (4.4), we obtain
            $$\dfrac{1}{mult_{p}(\tilde{D}^c)}\geq \dfrac{1}{v_{2}v_{3}}\left(\dfrac{w_{0}w_{2}w_{3}w_{4}}{Id}\right).$$
            As $Id<w_{0}w_{4}$ and $v_{2}v_{3}<w_{2}w_{3}$, we have
            $$\dfrac{1}{mult_{p}(\tilde{D}^c)}\geq\dfrac{1}{v_{2}v_{3}}\left(\dfrac{w_{0}w_{2}w_{3}w_{4}}{Id}\right)>1.$$
            In a similar way to above, we can choose some $\lambda\in\left]1,\dfrac{1}{v_{2}v_{3}}\left(\dfrac{w_{0}w_{2}w_{3}w_{4}}{Id}\right)\right[$.  
        \end{itemize}
        
    \end{itemize}
    By the claims given above, there exists some $\lambda>1$ such that $q\leq 0$ and the pair $(\tilde{X}^c,\lambda \tilde{D}^c)$ is canonical, which in turn implies that $(\tilde{X}^c,\lambda \tilde{D}^c)$ is lc. Thus, from Lemma 4.3, we have that the pair $(\tilde{X},\lambda \tilde{D})$ is lc. Finally, if we write
        $$\epsilon =\min\left\{\frac{\min\{ w_{i}w_{j}\}}{Id}, \dfrac{I^cw_{0}m_{2}^2}{Ir},\dfrac{1}{rv_{2}}\left(\dfrac{w_{0}w_{2}w_{3}w_{4}}{Id}\right), \dfrac{1}{v_{2}v_{3}}\left(\dfrac{w_{0}w_{2}w_{3}w_{4}}{Id}\right)\right\}>1,$$
        we have that when $\lambda\in]1,\epsilon[$, the pair $(X,\lambda D)$ is lc for any $\mathbb{Q}$-divisor $D\in |-K_{X}|$. By Lemma 2.4, it implies that $X$ is exceptional.
 
\end{proof}

Now, we study the case of polynomials of type B. Here we have the following proposition.
\begin{prop}
     Let $X: f=0$ be a Fano hypersurface in $\mathbb{P}({\bf w})$ where the weight vector ${\bf w}$ is given as in (2.1) subject to $w_{0}+w_{1}>I$ and $f$ is a polynomial of type B, that is, $$f=z_{0}^{\alpha_{0}}z_{1}^{\alpha_{1}}+z_{0}^{\beta_{0}}z_{1}^{\beta_{1}}+z_{4}z_{2}^{a_{2}}+z_{2}z_{3}^{a_{3}}+z_{3}z_{4}^{a_{4}}=0, \ \mbox{ where }\beta_{0}<\alpha_{0},$$  of degree $d=m_{2}m_{3}$, such that $\alpha_{1}$ and $\beta_{0}$ divide to $ m_{3}$, $\beta_{0}\in\left] \frac{I}{v_{0}m_{2}^2},\frac{d}{w_{0}+w_{1}-I}\right[$ and $\alpha_{1}\in\left] \frac{I}{v_{1}m_{2}^2},\frac{d}{w_{0}+w_{1}-I}\right[$. If $Id<\min\{w_{i}w_{j}\}$, then $X$ is exceptional.
 \end{prop}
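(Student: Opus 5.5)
The proof follows the same scheme as Proposition 4.1, now with two non-quasismooth points instead of one. First, the hypotheses $\alpha_1<\frac{d}{w_0+w_1-I}$ and $\beta_0<\frac{d}{w_0+w_1-I}$, together with $I<w_0+w_1$, are exactly those of Proposition 3.2, so $X$ is klt. By Lemma 2.4 it then suffices to find $\lambda>1$, independent of $D$, such that $(X,\lambda D)$ is lc for every $D\in|-K_X|_{\mathbb{Q}}$. The proof of Proposition 3.2 shows that $X$ is quasismooth away from $[0:1:0:0:0]$ and $[1:0:0:0:0]$. (When $\alpha_1=1$ only the first point occurs.) At quasismooth points the argument of Proposition 4.1 applies verbatim. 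Lemma 4.2 gives
\[
\operatorname{mult}_p D\le \frac{\max\{w_iw_jw_k\}\,Id}{w_0w_1w_2w_3w_4}=\frac{Id}{\min\{w_iw_j\}}<1,
\]
so Lemma 4.1 yields lc for any $\lambda<\min\{w_iw_j\}/(Id)$.

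At $[0:1:0:0:0]$, set $z_1=1$. The local equation is $z_0^{\alpha_0}+z_0^{\beta_0}+z_4z_2^{a_2}+z_2z_3^{a_3}+z_3z_4^{a_4}$, and since $\beta_0<\alpha_0$ the lowest-order $z_0$-term is $z_0^{\beta_0}$. Since $\beta_0\mid m_3$, put $r_0=m_3/\beta_0$ and take the weighted tangent cone with respect to $(r_0,v_2,v_3,v_4)$. It is the quasismooth, well-formed hypersurface $z_0^{\beta_0}+z_4z_2^{a_2}+z_2z_3^{a_3}+z_3z_4^{a_4}=0$ of degree $m_3$. This is literally the cone from Proposition 4.1, so Claims 1 and 2 there carry over unchanged. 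The bound $\beta_0<d/(w_0+w_1-I)$ gives $I^c>0$. Lemma 4.4 gives $\deg\tilde D^c\le Id/(w_0w_2w_3w_4)$, and then $\beta_0>I/(v_0m_2^2)$ makes $\frac{I^c\beta_0 v_0 m_2^2}{I}>1$, which yields $q\le0$. Finally, Lemma 4.2 applied to the curve $\tilde D^c$, combined with $Id<\min\{w_iw_j\}$, gives $\operatorname{mult}_p(\lambda\tilde D^c)\le1$. Lemma 4.3 then gives lc at this point.

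At $[1:0:0:0:0]$ the argument is symmetric. Set $z_0=1$; the local equation is $z_1^{\alpha_1}+z_1^{\beta_1}+(\text{cycle})$. Since $\alpha_0>\beta_0$ and the two monomials have the same degree, $\alpha_1<\beta_1$, so the lowest term is $z_1^{\alpha_1}$. With $\alpha_1\mid m_3$, put $r_1=m_3/\alpha_1$ and take the cone $z_1^{\alpha_1}+(\text{cycle})$ in $\mathbb{P}(r_1,v_2,v_3,v_4)$. Claim 1 now uses $\alpha_1<d/(w_0+w_1-I)$ to get $I^c>0$. Lemma 4.4, applied with $w_0$ as the dehomogenizing weight, gives $\deg\tilde D^c\le Id/(w_1w_2w_3w_4)$. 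Since $w_1=r_1\alpha_1v_1$, the resulting bound is $I^c\alpha_1v_1m_2^2/I>1$, which is exactly where the hypothesis $\alpha_1>I/(v_1m_2^2)$ enters. Claim 2 goes through as before, with $w_1$ in place of $w_0$, using $r_1v_i<w_1w_i$ and $v_iv_j<w_iw_j$.

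Taking $\lambda$ below the minimum of all these finitely many bounds, each of which exceeds $1$, shows $\operatorname{glct}(X)>1$, so $X$ is exceptional by Lemma 2.4. The main point needing care is the second singular point. There one must check that the tangent cone is really $z_1^{\alpha_1}+(\text{cycle})$, i.e.\ that $\alpha_1<\beta_1$ and that no other monomial has weighted degree at most $m_3$. One must also check that the cone is well-formed, for which I would use $\gcd(v_i,v_j)=1$ exactly as before.
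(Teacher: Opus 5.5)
Your proposal follows the paper's proof essentially line by line: klt via Proposition 3.2, Lemmas 4.1--4.2 at quasismooth points, the cones $z_0^{\beta_0}+z_4z_2^{a_2}+z_2z_3^{a_3}+z_3z_4^{a_4}$ in $\mathbb{P}(r,v_2,v_3,v_4)$ and $z_1^{\alpha_1}+\cdots$ in $\mathbb{P}(r_1,v_2,v_3,v_4)$, then Claims 1--2 of Proposition 4.1. It therefore also inherits a step that fails, and the paper's proof has the same defect.

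\textbf{The false step.} The step is the bound $\deg\tilde D^c\le Id/(w_0w_2w_3w_4)$, which is the paper's (4.4), together with its analogue $\deg\tilde D^c\le Id/(w_1w_2w_3w_4)$ at $[1:0:0:0:0]$. Lemma 4.4 bounds the weighted multiplicity only for the \emph{ambient} weights $(w_0,w_2,w_3,w_4)=(\beta_0v_0r,\,m_2v_2,\,m_2v_3,\,m_2v_4)$. Your cone, however, is taken for $(r,v_2,v_3,v_4)$. These are not proportional, because $\beta_0v_0<m_2$ (recall $\beta_0v_0+\beta_1v_1=m_2$). The inequality is genuinely false. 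Take $D=\frac{I}{w_0}(X\cap\{z_0=0\})\in|-K_X|_{\mathbb{Q}}$. Its cone is $\tilde D^c=\frac{I}{w_0}\{z_0=0,\ z_4z_2^{a_2}+z_2z_3^{a_3}+z_3z_4^{a_4}=0\}\subset\mathbb{P}(r,v_2,v_3,v_4)$, of degree $\frac{I}{w_0}\cdot\frac{r\,m_3}{r\,v_2v_3v_4}=\frac{I}{v_0v_2v_3v_4}$. This is $m_2^2$ times your bound $\frac{Id}{w_0w_2w_3w_4}=\frac{I}{v_0m_2^2v_2v_3v_4}$. For $X_1$ in Example 4.2 the two numbers are $\frac{9}{3325}$ versus $\frac{9}{83125}$. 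The divisor $\frac{I}{w_1}(X\cap\{z_1=0\})$ breaks the bound in the same way at $[1:0:0:0:0]$.

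\textbf{Consequences.} Neither claim is established.
\begin{itemize}
\item \emph{Claim 1.} For the divisor above, the $(r,v_2,v_3,v_4)$-weighted valuation $v$ has $A_{\tilde X}(v)=I^c$ and $v(\tilde D)=Ir/w_0=I/(\beta_0v_0)$. So log canonicity at $[0:1:0:0:0]$ already forces $\lambda\le I^c\beta_0v_0/I$. What must actually be proved is $I^c\beta_0v_0>I$. The hypothesis $\beta_0>I/(v_0m_2^2)$ only yields $I^c\beta_0v_0m_2^2>I$, so it is off by exactly $m_2^2$. The same applies to $\alpha_1>I/(v_1m_2^2)$ at the other point.
\item \emph{Claim 2.} Even with the true degree, the Lemma 4.2 estimate fails. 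For $X_1$ in Example 4.2 it gives $\max\{w^c_iw^c_j\}\deg\tilde D^c=41\cdot 25\cdot\frac{9}{3325}>1$. So the route ``multiplicity at most $1$, then Lemma 4.1'' cannot close.
\end{itemize}

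\textbf{What a repair needs.}
\begin{itemize}
\item A correct bound for the multiplicity with respect to these non-ambient weights. The example shows any version of Lemma 4.4 for them must lose at least the factor $m_2^2$.
\item A derivation of $I^c\beta_0v_0>I$, and of its analogue with $\alpha_1,v_1$, from the hypotheses, for instance from $Id<w_0w_1$.
\item A finer analysis of log canonicity on $\tilde X^c$ than the global multiplicity bound.
\end{itemize}
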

 \begin{proof}
     Let $X\subset\mathbb{P}(\textbf{w})$ be Fano hypersurface defined by the polynomial 
     $$X: f=z_{0}^{\alpha_{0}}z_{1}^{\alpha_{1}}+z_{0}^{\beta_{0}}z_{1}^{\beta_{1}}+z_{4}z_{2}^{a_{2}}+z_{2}z_{3}^{a_{3}}+z_{3}z_{4}^{a_{4}}=0.$$
     From Proposition 3.2 it follows that $X$ is klt.  Now we consider two cases: $\alpha_{1}=1$ or $\alpha_{1}\geq2$.
     \begin{itemize}
         \item First, if  $\alpha_{1}=1$. Then $X$ is defined by
         $$X: f=z_{0}^{\alpha_{0}}z_{1}+z_{0}^{\beta_{0}}z_{1}^{\beta_{1}}+z_{4}z_{2}^{a_{2}}+z_{2}z_{3}^{a_{3}}+z_{3}z_{4}^{a_{4}}=0.$$
         As $\beta_{1}\geq 2$, then the terms $z_{0}^{\alpha_{0}}z_{1}$ and $z_{0}^{\beta_{0}}z_{1}^{\beta_{1}}$ are different, where $\alpha_{0}>\beta_{0}$.
         In addition, we see that $X$ is quasi-smooth in all points except $[0:1:0:0:0]$. Since $Id<\min\{w_{i}w_{j}\}$, we can use a similar argument as in Proposition 4.1 and we can find some rational number $\lambda_{1}>1$ such that the pair $(X,\lambda D)$ is lc outside of the point $[0:1:0:0:0]$ for any $D\in | -K_{X}|$. On the other hand, to prove that $X$ is exceptional in the non-quasi-smooth point $[0:1:0:0:0]$, we will work with the weighted tangent cone on the intersection of $X$ with the hyperplane $z_{1}=1$. More precisely, we have that $X$ is described in $z_{1}=1$ as follows
         $$X: z_{0}^{\alpha_{0}}+z_{0}^{\beta_{0}}+z_{4}z_{2}^{a_{2}}+z_{2}z_{3}^{a_{3}}+z_{3}z_{4}^{a_{4}}=0.$$
         Clearly, this equation defines a hypersurface $\tilde{X}$ in the affine space $\mathbb{C}^4$. Similarly to Proposition 4.1, we obtain the divisor $\tilde{D}$ from $D$.  Next, we will prove that there exists some $\lambda>1$ such that the pair $(\tilde{X},\lambda \tilde{D})$ is lc near the origin. For this, we consider the weighted tangent cone $\tilde{X}^c\subset\mathbb{P}(\textbf{w}^{c})$ to the hypersurface $\tilde{X}\subset\mathbb{C}^{4}$ in the origin, which is defined by the polynomial of degree $m_{3}$
          $$\tilde{X}^{c}:  z_{0}^{\beta_{0}}+z_{4}z_{2}^{a_{2}}+z_{2}z_{3}^{a_{3}}+z_{3}z_{4}^{a_{4}}=0,$$
           where $\textbf{w}^{c}=(r,v_{2},v_{3},v_{4})$ and $r=m_{3}/\beta_{0}$. Here, we suppress the term $z_{0}^{\alpha_{0}}$ because its degree is greater than $m_{3}$. We notice that the polynomial defining $\tilde{X}$ and the weight vector $\textbf{w}^c$ are similar as in Proposition 4.1. Moreover, as $\tilde{X}^c$ is well-formed, $Id<\min\{w_{i}w_{j}\}$ and $\beta_{0}\in\left] \frac{I}{v_{0}m_{2}^2},\frac{d}{w_{0}+w_{1}-I}\right[$, then we can repeat the argument used in Proposition 4.1. Thus, we find some $\lambda_{2}>1$ such that the pair $(\tilde{X}^c,\lambda_{2}\tilde{D}^c)$ is lc and $K_{\tilde{X}^c}+\lambda_{2} \tilde{D}^{c}\sim_{\mathbb{Q}}\mathcal{O}_{\tilde{X}^{c}}(q)$, with $q\leq0$,  where $\tilde{D}^c$ is the weighted tangent cone of $\tilde{D}$ with respect to weight vector $\textbf{w}^c$.  Lemma 4.3  implies that $(\tilde{X},\lambda_{2}\tilde{D})$ is lc near the origin. So, if we take $\lambda=\min\{\lambda_{1},\lambda_{2}\}>1$, we have that the pair $(X,\lambda D)$ is lc for any $D\in |-K_{X}|$. Finally, by Lemma 2.4, we conclude that $X$ is exceptional.
           
           \item When $\alpha_{1}\geq 2$, the hypersurface $X$ is defined by
           $$X:f=z_{0}^{\alpha_{0}}z_{1}^{\alpha_{1}}+z_{0}^{\beta_{0}}z_{1}^{\beta_{1}}+z_{4}z_{2}^{a_{2}}+z_{2}z_{3}^{a_{3}}+z_{3}z_{4}^{a_{4}}=0.$$
           Clearly, this is quasi-smooth in all points except at the points $[1:0:0:0:0]$ and $[0:1:0:0:0]$. As $Id<\min\{w_{i}w_{j}\}$, we repeat the same process given above and obtain a value $\lambda_{1}>1$ such that the pair $(X,\lambda_{1} D)$ is lc in the smooth part of $X$. On the other hand, to verify that $X$ is exceptional at the points $[1:0:0:0:0]$ and $[0:1:0:0:0]$, we will use again the notion of  weighted tangent cones.
           
           First, we intersect $X$ with the hyperplane $z_{0}=1$. Here, we describe the hypersurface $X$ as $$X:f=z_{1}^{\alpha_{1}}+z_{1}^{\beta_{1}}+z_{4}z_{2}^{a_{2}}+z_{2}z_{3}^{a_{3}}+z_{3}z_{4}^{a_{4}}=0.$$
           Notice that this equation defines a hypersurface $\tilde{X}$ in the affine space $\mathbb{C}^4$. Moreover, we denote by $\tilde{D}$  the divisor coming from $D$ in $z_{0}=1$. Next, we  prove that there exists some $\lambda>1$ such that the pair $(\tilde{X},\tilde{D})$ is lc near the origin. 
           Let $\tilde{X}^c\subset\mathbb{P}(\textbf{w}^c)$ be the weighted tangent cone to the hypersurface $\tilde{X}$ in the origin, which is defined by the following polynomial of degree $m_{3}$
           $$\tilde{X}^{c}: z_{1}^{\alpha_{1}}+z_{4}z_{2}^{a_{2}}+z_{2}z_{3}^{a_{3}}+z_{3}z_{4}^{a_{4}}=0, $$
           where $\textbf{w}^{c}=(r,v_{2},v_{3},v_{4})$ and $r=m_{3}/\alpha_{1}$. Since the degree of term $z_{1}^{\beta_{1}}$ is greater than $m_{3}$, we do not take it  into account. Now, we notice that the polynomial defining $\tilde{X}$ and the weight vector $\textbf{w}^c$ are similar as in Proposition 4.1. Moreover, as the hypersurface 
           $\tilde{X}^{c}$ is well-formed, $d<\min\{w_{i}w_{j}\}$ and $\alpha_{1}\in\left] \frac{I}{v_{1}m_{2}^2},\frac{d}{w_{0}+w_{1}-I}\right[$, then once again, we use the   argument on the proof of  Proposition 4.1. This means that we can find some $\lambda_{2}>1$ such that the pair $(\tilde{X}^c,\lambda_{2}\tilde{D}^c)$ is lc and $K_{\tilde{X}^c}+\lambda_{2} \tilde{D}^{c}\sim_{\mathbb{Q}}\mathcal{O}_{\tilde{X}^{c}}(q)$, with $q\leq0$,  where $\tilde{D}^c$ is the weighted tangent cone of $\tilde{D}$ with respect to weight vector $\textbf{w}^c$. By Lemma 4.3, it implies that $(\tilde{X},\lambda_{2}\tilde{D})$ is lc near the origin.

           On the other hand, since  $Id<\min\{w_{i}w_{j}\}$ and $\beta_{0}\in\left] \frac{I}{v_{0}m_{2}^2},\frac{d}{w_{0}+w_{1}-I}\right[$, we can repeat the process above and obtain  a rational number $\lambda_{3}>1$ such that $(X,\lambda_{3}D)$ is lc in a neighborhood of $[0:1:0:0:0]$ for any $D\in |-K_{X}|$. 
           
           Finally, if we take $\lambda=\min\{\lambda_{1},\lambda_{2},\lambda_{3}\}>1$, we have that the pair $(X,\lambda D)$ is lc for any $D\in |-K_{X}|$. By Lemma 2.4,   $X$ is exceptional.
           \end{itemize}
 \end{proof}


\subsection{Examples}

We exhibit some examples from Propositions 4.1 and 4.2. 

\begin{exm} The following two examples are taken from the list of index 1 of K\"ahler-Einstein 3-folds given in \cite{JK2}.

\begin{enumerate}
    \item Given the weight vector $\mathbf{w}=(66,231,185,259,481)$, we find a quasismooth hypersurface $X:z_{0}^{15}z_{1}+z_{0}z_{1}^{5}+z_{4}z_{2}^{4}+z_{2}z_{3}^{4}+z_{3}z_{4}^{2}=0\subset \mathbb{P}(\mathbf{w})$ of degree $d=1221$. Since the weights $w_{i}$'s and $d$ satisfy the inequality $Id<\frac{4}{3}\min\{ w_{i}w_{j}\}$, then the hypersurface $X$ admits a Kähler-Einstein orbifold metric. On the other hand, the set  $H^{0}(\mathbb{P}(\mathbf{w}),\mathcal{O}(d))$ is generated by the monomials 
    $$\{z_{0}^{15}z_{1}, z_{0}^{8}z_{1}^3, z_{0}z_{1}^{5}, z_{4}z_{2}^{4},  z_{2}z_{3}^{4}, z_{3}z_{4}^{2}\}.$$
 Now, we take the following non-quasismooth hypersurface $$X_{0}:z_{0}^{8}z_{1}^3+z_{0}z_{1}^{5}+z_{4}z_{2}^{4}+z_{2}z_{3}^{4}+z_{3}z_{4}^{2}=0.$$  Clearly, these verify $w_{0}+w_{1}>I=1$ and $d=1221<\min\{w_{i}w_{j}\}=66(185)$. Moreover, the exponents $\alpha_{1}=3$ and $\beta_{0}=1$ divide  $m_{3}=33$ and both are less than  $\frac{d}{w_{0}+w_{1}-1}=\frac{1221}{296}$. Then by Proposition 4.2, the weighted hypersurface $X_{0}$ is exceptional.
 \item We consider the weight vector $\mathbf{w}=(118,118,35,185,135)$. Here, we have the quasismooth hypersurface $X:z_{0}^{5}+z_{0}z_{1}^{5}+z_{4}z_{2}^{13}+z_{2}z_{3}^{3}+z_{3}z_{4}^{3}=0\subset \mathbb{P}(\mathbf{w})$ of degree $d=590$. Easily, we can verify that the weights $w_{i}$'s and $d$ satisfy the inequality $Id<\frac{4}{3}\min\{ w_{i}w_{j}\}$. Thus, the hypersurface $X$ admits a Kähler-Einstein orbifold metric. On the other hand, the set  $H^{0}(\mathbb{P}(\mathbf{w}),\mathcal{O}(d))$ is generated by the monomials 
 $$\{z_{0}^{5},z_{0}^{4}z_{1},z_{0}^3z_{1}^2, z_{0}^2z_{1}^{3},z_{0}z_{1}^{4},z_{1}^{5}, z_{4}z_{2}^{13},  z_{2}z_{3}^{3}, z_{3}z_{4}^{3}\}.$$
 Here, we take the two non-quasismooth hypersurfaces $$X_{0}:z_{0}^{5}+z_{0}^{2}z_{1}^{3}+z_{4}z_{2}^{13}+z_{2}z_{3}^{3}+z_{3}z_{4}^{3}=0.$$ $$X_{1}:z_{0}^{3}z_{1}^{2}+z_{0}^{2}z_{1}^{3}+z_{4}z_{2}^{13}+z_{2}z_{3}^{3}+z_{3}z_{4}^{3}=0.$$
 Next, we will see that $X_{0}$ and $X_{1}$ are exceptional. First, we notice that $w_{0}+w_{1}>I=1$ and $d=590<\min\{w_{i}w_{j}\}=118(35)$.
 \begin{itemize}
     \item For the hypersurface $X_{0}$, we see that the exponent $\beta_{0}=2$ divides  $m_{3}=118$ and it is less than $\frac{d}{w_{0}+w_{1}-1}=\frac{590}{235}$. Thus,  by Proposition 4.1, $X_{0}$ is exceptional.
     \item For the hypersurface $X_{1}$, we have that the exponents $\alpha_{1}=2$ and $\beta_{0}=2$ divide  $m_{3}=118$ and both are less than  $\frac{d}{w_{0}+w_{1}-1}=\frac{590}{235}$. Then by Proposition 4.2, $X_{1}$ is exceptional.
 \end{itemize}
\end{enumerate}
 \end{exm}

\begin{exm}
For the weight vector $\mathbf{w}=(82,82,35,125,95)$, we find a quasismooth hypersurface $X:z_{0}^{5}+z_{0}^5+z_{4}z_{2}^{9}+z_{2}z_{3}^{3}+z_{3}z_{4}^{3}=0 \subset\mathbb{P}(\mathbf{w})$ of degree $d=410$. Here, the index is $I=|\mathbf{w}|-d=9$. Clearly, these verify the inequality $Id<\frac{4}{3}\min\{ w_{i}w_{j}\}$. Thus, the hypersurface $X$ admits a Kähler-Einstein orbifold metric. Now, we consider the generating monomials of the set  $H^{0}(\mathbb{P}(\mathbf{w}),\mathcal{O}(d))$: 
$$\{z_{0}^{5},z_{0}^{4}z_{1},z_{0}^3z_{1}^2, z_{0}^2z_{1}^{3},z_{0}z_{1}^{4},z_{1}^{5}, z_{4}z_{2}^{9},  z_{2}z_{3}^{3}, z_{3}z_{4}^{3}\}.$$ 
Using these monomials, we construct the following hypersurfaces:
$$X_{0}:z_{0}^{5}+z_{0}^{2}z_{1}^{3}+z_{4}z_{2}^{9}+z_{2}z_{3}^{3}+z_{3}z_{4}^{3}=0.$$ $$X_{1}:z_{0}^{4}z_{1}+z_{0}^{2}z_{1}^{3}+z_{4}z_{2}^{9}+z_{2}z_{3}^{3}+z_{3}z_{4}^{3}=0.$$
Let us see that $X_{0}$ and $X_{1}$ are exceptional. First, we notice that $w_{0}+w_{1}>I=9$ and $d=410<\min\{w_{i}w_{j}\}=82(35)$.
 \begin{itemize}
     \item For the hypersurface $X_{0}$, we see that the exponent $\beta_{0}=2$ divides $m_{3}=82$ and it is less than $\frac{d}{w_{0}+w_{1}-I}=\frac{410}{155}$. Then by Proposition 4.1, we have $X_{0}$ is exceptional.
     \item For the hypersurface $X_{1}$, we have that the exponents $\alpha_{1}=1$ and $\beta_{0}=2$ divide  $m_{3}=82$ and both are less than  $\frac{d}{w_{0}+w_{1}-I}=\frac{410}{155}$. Then by Proposition 4.2, the hypersurface $X_{1}$ is exceptional.
 \end{itemize}
\end{exm}

\begin{exm}
    We consider the weight vector $\mathbf{w}$ of the form
    \begin{equation}
        \mathbf{w}=\left(9k+1,9k+1,5(3k-2),5(2k+1),35\right),
    \end{equation}
    where $k\in\{5,7,9,13\}$. For each $k$, we take the following three types of non-quasismooth hypersurfaces of degree $d=5(9k+1)$:
    \begin{align*}    X_{0} & :z_{0}^{5}+z_{0}^{2}z_{1}^{3}+z_{4}z_{2}^{3}+z_{2}z_{3}^{3}+z_{3}z_{4}^{k}=0.\\
    X_{1} & :z_{0}^{4}z_{1}+z_{0}^{2}z_{1}^{3}+z_{4}z_{2}^{3}+z_{2}z_{3}^{3}+z_{3}z_{4}^{k}=0.\\
    X_{2} & : z_{0}^{3}z_{1}^2+z_{0}^{2}z_{1}^{3}+z_{4}z_{2}^{3}+z_{2}z_{3}^{3}+z_{3}z_{4}^{k}=0.
    \end{align*}
    Let us see that these are exceptional. Computing the index, we obtain $I=27-2k>0$. Then
    $$w_{0}+w_{1}=2(9k+1)>I=27-2k.$$
    In addition, we have that $\min\{ w_{i}w_{j}\}=35(9k+1)>d$. 
    \begin{itemize}
     \item For the hypersurface $X_{0}$, we see that the exponent $\beta_{0}=2$ divides  $m_{3}=9k+1$, which is even. Moreover, it is less than $\frac{d}{w_{0}+w_{1}-I}=\frac{5(9k+1)}{20k-25}$. Then by Proposition 4.1, we have $X_{0}$ is exceptional.
     \item For the hypersurface $X_{1}$, we have that the exponents $\alpha_{1}=1$ and $\beta_{0}=2$ divide  $m_{3}=9k+1$ and both are less than  $\frac{d}{w_{0}+w_{1}-I}=\frac{5(9k+1)}{20k-25}$. Then by Proposition 4.2, the hypersurface $X_{1}$ is exceptional.
     \item For the hypersurface $X_{2}$, we have that the exponents $\alpha_{1}=\beta_{0}=2$ divide $m_{3}=9k+1$ and both are less than $\frac{d}{w_{0}+w_{1}-I}=\frac{5(9k+1)}{20k-25}$. Then by Proposition 4.2, the hypersurface $X_{2}$ is also exceptional.
 \end{itemize}
\end{exm}

\section*{Declarations}





\subsection*{Funding} The first author received  financial support from Pontificia Universidad Católica del Perú through project VRI-DFI 2019-1-0089.

\subsection*{Data Availability} Data sharing not applicable to this article as no datasets were generated or analysed during the current study.

\subsection*{Conflict of interests} The authors have no competing interests to declare that are relevant to the content of this article.




\begin{thebibliography}{9}





\bibitem{ATW}
Abramovich, D.,  Temkin, M.,  Włodarczyk, J., \emph{Functorial embedded resolution via weighted blowings up,} Algebra \& Number Theory, Vol. 18, 1557-1587, (2024).  
DOI: 10.2140/ant.2024.18.1557


\bibitem{Bi2}
 Birkar, C., \emph {Singularities of linear systems and boundedness of Fano varieties,}  Ann. of Math. 193, no. 2, 347-405., (2021).


\bibitem{BLX}
Blum, H.,  Liu, Y.,  Xu, C.,  \emph{Openness of $K$-semistability for Fano varieties,} Duke Math. J. 171, 2753-2797, (2022).














\bibitem{BG1}
Boyer, C.P., Galicki, K. \emph{New Einstein metrics in dimension five,} J. Differential Geom. 57, no. 3, 443-463, (2001).



\bibitem{BBG}
Boyer, C.P., Galicki, K. \emph{Sasakian Geometry,} Oxford University Press, (2008).




\bibitem{BGN1}
Boyer, C.P., Galicki, K., Nakamaye, M. \emph{Einstein Metrics on Rational Homology 7-Spheres,} Ann.Inst.Fourier 52, no.5, 1569-1584, (2002).

\bibitem{BGN2}
Boyer, C.P., Galicki, K., Nakamaye, M. \emph{On the Geometry of Sasakian-Einstein 5-Manifold,} Math. Ann. 325, 485-524, (2003).



\bibitem{BGK}
Boyer, C.P., Galicki, K., Koll\'ar, J. \emph{Einstein Metrics on Spheres,} Annals of Mathematics, 162 , 557-580, (2005).


\bibitem{CL}
Cuadros, J., Lope J., \emph{Sasaki-Einstein 7-manifolds and Orlik's conjecture,} Ann. Glob. Anal. Geom. 65, 3, (2024). 




\bibitem{CL2}
Cuadros, J., Lope J., \emph{The local moduli of Sasaki-Einstein rational homology 7-spheres and invertible polynomials,} 
https://arxiv.org/abs/2503.18650, (2025).


\bibitem{CS}
Cheltsov, I.A., Shramov, K.A. \emph{Log canonical thresholds of smooth Fano threefolds} Russian Math. Surveys, 63: 5 859-959, (2008). 

\bibitem{CDS}
Chen, X. X., Donaldson, S. K., Sun, S. \emph{Kähler-Einstein metrics on Fano manifolds, I-III.} J. Amer. Math. Soc. 28, 183-197, 199-234, 235-278, (2015).

\bibitem{DK}
Demailly, J.-P., Kollár, J.,  \emph{Semi-continuity of complex singularity exponents and Kähler-Einstein metrics on Fano orbifolds,} Ann. Sci. École Norm. Sup. 34, 525-556, (2001).

\bibitem{Do}
 Donaldson, S.K.,  \emph{Scalar curvature and stability of toric varieties,} J. Differential Geom., 62(2):289-349, (2002).

\bibitem{IF}
Iano-Fletcher, A. R., \emph{Working with weighted complete intersections,} Explicit birational geometry of 3-folds, 101-173. London Math. Soc. Lecture Notes Ser. 281, Cambridge Univ. Press, Cambridge (2000).


\bibitem{IP}
Ishii, S., Prokhorov, Y.,  \emph{Hypersurface exceptional singularities,} Internat. J. Math. 12, 661-687, (2001).


\bibitem{JK1}
Johnson, J.M., Kollár, J., \emph{Kähler-Einstein metrics on log del Pezzo surfaces in weighted projective 3-spaces,} 
Ann. Inst. Fourier (Grenoble) 51, 69-79, (2001).

\bibitem{JK2}
Johnson, J.M., Kollár, J.,  \emph{Fano Hypersurfaces in Weighted Projective 4-Space,} Exper. Math, 10, no.1, 151-158, (2004).










\bibitem{KS}
Kreuzer, M.,  Skarke, H., \emph{On the classification of quasihomogeneous functions,} Comm. Math. Phys. 150  no. 1, 137-147, (1992).






\bibitem{Ko1}
Kollár, J.,  \emph{Singularities of pairs,} Algebraic geometry (Santa Cruz, 1995), 221-287. Proc. Symp. Pure Math. 62, Amer. Math. Soc. (1997). 

\bibitem{KM}
Kollár, J., Mori, S., \emph{Birational geometry of algebraic varieties,} Cambridge Tracts in Mathematics, vol. 134, Cambridge University Press, Cambridge, (1998). 

\bibitem{Kol1}
 Kollár, J.,  \emph{Einstein metrics on five-dimensional Seifert bundles,}  J. Geom. Anal. 15, no. 3, 445-476, (2005).


\bibitem{Kol2}
Kollár, J.,  \emph{Einstein Metrics on connected sums of $S^2 \times S^3$,} J. Diff. Geom. 77, no. $2,259-272$, (2007).

\bibitem{Ko2}
Kollár J., \emph{Is There a Topological Bogomolov-Miyaoka-Yau Inequality?,} Pure and Applied Mathematics Quarterly Volume 4, Number 2 (Special Issue: In honor of Fedor Bogomolov, Part 1 of 2) 203-236, (2008).

\bibitem{Ko3}
Kollár, J.,  \emph{ Singularities of the minimal model program,} With the collaboration of Sándor Kovács. Cambridge, (2013).
 
 \bibitem{Ko4}
Kollár, J.,  \emph{Links of complex analytic singularities,} Surveys in Differential Geometry, Volume 18, 157-193 (2013).

\bibitem{Li}
Li, C.  \emph{Notes on weighted Kähler-Ricci solitons and application to Ricci-flat Kähler cone metrics,}  arXiv:2107.02088 (2021).

 \bibitem{LXZ}
Liu, Y.,  Xu, C., Zhuang, Z., \emph{Finite generation for valuations computing stability thresholds and applications to $K$-stability,} Ann. of Math. 196, 507-566, (2022).





\bibitem{Mc}
McQuillan, M. \emph{Very functorial, very fast, and very easy resolution of singularities,} Geom. Funct. Anal. 30, 858–909 (2020). https://doi.org/10.1007/s00039-020-00523-7

\bibitem{Mi}
Milnor, J. \emph{Singular Points of Complex Hypersurfaces,} Annals of Mathematical Studies, Vol.61, Princeton
University Press, Princeton, NJ, (1968).

\bibitem{MO}
Milnor, J., Orlik, P. \emph{Isolated Singularities defined by Weighted Homogeneous Polynomials,} Topology 9,  385-393, (1970).


\bibitem{Od}
Odaka, Y.,  \emph{Compact moduli of Calabi-Yau cones and Sasaki-Einstein spaces,} https://arxiv.org/abs/2405.07939v3, (2024).

\bibitem{OS}
Odaka Y.,   Sano, Y.,  \emph{Alpha invariant and K-stability of $\mathbb{Q}$-Fano varieties,} Adv. Math. 229, no. 5, 2818-2834, (2012). 

\bibitem{Or}
Orlik, P. \emph{On the Homology of Weighted Homogeneous Manifolds,} Proceedings of the Second Conference on Compact Transformation Groups (Univ. Mass, Amherst, Mass 1971) Part I (Berlin), Spring,  pp 260-269, (1972).

\bibitem{OW}
Orlik, P., Wagreich, P. \emph{Seifert n-manifolds,} Invent. Math. 28, 137-159. MR (1975).








\bibitem{Sh}
Shokurov, V.V, \emph{Birational Geometry of Algebraic Varieties, Open Problems list}  from the 23rd International Symposium of the Division of Mathematics of the Taniguchi Foundation, Katata, Japan,  30-32, (1988).




\bibitem{Ti}
Tian, G.,  \emph{On Kähler-Einstein metrics on certain Kähler manifolds with $c_1(M)>0$,} Invent. Math. 89, 225-246, (1987). 

\bibitem{Ti2}
Tian, G.,  \emph{K-stability and Kähler-Einstein metrics,}
Comm. Pure Appl. Math. 68, no. 7, 1085–1156, (2015).

\bibitem{To}
Totaro, T., \emph{Klt varieties with conjecturally minimal volume,} Int. Math. Res. Not. IMRN, no. 1, 462-491, (2024).


\end{thebibliography}
\end{document}